\newtheorem{theorem}{Theorem}[section]
\newtheorem{lemma}[theorem]{Lemma}
\newtheorem{prop}[theorem]{Proposition}
\newtheorem{corollary}[theorem]{Corollary}
\theoremstyle{remark}
\theoremstyle{definition}
\definecolor{lightgray}{RGB}{211,211,211}
\definecolor{lightgreen}{RGB}{144, 238, 144}
\definecolor{llightgray}{RGB}{237,237,237}
\definecolor{lightyellow}{RGB}{205,201,165}
\definecolor{lightskincol}{RGB}{255,229,200}
\definecolor{skincol}{RGB}{255,195,170}
\definecolor{darkskincol}{RGB}{240,184,160}
\definecolor{skinpinkcol}{RGB}{255,204,203}
\definecolor{darkkhaki}{RGB}{204,204,0}
\definecolor{lightorange}{RGB}{255,186,102}
\definecolor{darkorange}{RGB}{255,140,0}
\definecolor{mediumturquoise}{RGB}{72, 209, 204}
\newcommand{\abs}[1]{\left| #1 \right|}
\newcommand{\norm}[1]{\left\lVert #1 \right\rVert}
\newcommand{\scp}[1]{\left\langle #1 \right\rangle}
\newcommand{\set}[1]{\left\lbrace #1\right\rbrace}
\newcommand{\B}{\mathbb{B}}
\newcommand{\D}{\mathcal{D}}
\newcommand{\Hilbert}{\mathcal{H}}
\newcommand{\K}{\mathcal{K}}
\newcommand{\M}{\mathcal{M}}
\newcommand{\Radon}{\mathcal{R}}
\newcommand{\restr}[2]{\left. #1 \right|_{#2}}
\newcommand{\vol}{\mathrm{vol}}
\renewcommand{\d}[1]{d #1\,}
\newcommand{\C}{\mathbb{C}}
\newcommand{\N}{\mathbb{N}}
\newcommand{\R}{\mathbb{R}}
\newcommand{\Sp}{\mathbb{S}}
\newcommand{\supp}[1]{\mathrm{supp}(#1)}
\newcommand{\nd}{\partial_{\nu}}
\colorlet{lred}{red!40}
\colorlet{lgreen}{green!40}
\colorlet{lblue}{blue!40}
\numberwithin{equation}{section}
\numberwithin{theorem}{section}
\begin{document}
	\title{{\fontsize{12}{15}\bfseries\uppercase{Recovering the Initial Data of the Wave Equation from Neumann Traces}}\thanks{\textbf{Funding}: This work has been supported by the Austrian Science Fund (FWF), project P 30747-N32.}}
	
	\author{Florian Dreier\thanks{Department of Mathematics, University of Innsbruck, Technikerstraße 13, A-6020 Innsbruck, Austria (Florian.Dreier@uibk.ac.at, Markus.Haltmeier@uibk.ac.at).}
\and and \and Markus Haltmeier\footnotemark[2]}

	\date{}
	
	\maketitle
	\begin{abstract}
  We study the problem of recovering the initial data  $(f, 0)$  of  the standard wave equation from the Neumann trace (the normal derivative) of the solution on the boundary of convex domains in arbitrary spatial dimension. Among others, this problem is relevant for tomographic image reconstruction including photoacoustic tomography. We establish explicit inversion formulas of the back-projection type that recover the initial data up to an additive  term defined by a smoothing integral operator. In the case that the boundary of the domain is an ellipsoid, the  integral operator vanishes, and  hence we obtain an analytic formula for recovering the initial data from Neumann traces of the wave equation on ellipsoids.
  
  \medskip \noindent \textbf{Keywords.}
Image reconstruction, wave equation, inversion formula, Neumann trace, photoacoustic computed tomography. 

\medskip \noindent \textbf{AMS subject classifications:}
35R30, 44A12, 35L05, 92C55.
\end{abstract}

\section{Introduction}
	The problem of determining the initial data of the wave equation from indirect observations arises in various practical applications. Well-known examples include  photoacoustic tomography, ultrasound tomography, SONAR or seismic imaging (see, for example, \cite{BleCohJoh13,KucKun11,LidDurLitHua09,NatWub95,poudel2019survey,QuiRieSch11,rosenthal2013acoustic,WaHu12,WaPaKuXiStWa03}). These applications are often well modeled by the  standard wave equation
	\begin{equation}
		\label{eq:waveeq}
		\begin{aligned}
	    	(\partial_t^2-\Delta)u(x,t)&=0 &\quad &\text{for } (x,t) \in \R^n \times (0,\infty),\\
	    	u(x,0)&=f(x)&\quad &\text{for } x \in \R^n,&\\
	    	(\partial_tu)(x,0)&=g(x)&\quad &\text{for } x \in \R^n \,,
	    \end{aligned}
	\end{equation}
where $(f,g)$ with $f,g\colon\R^n\to\R$ are the initial data,  $\Delta$ denotes the Laplacian in the spatial component $x \in \R^n$, $\partial_t$ is the partial derivative with respect to the time $t>0$, and $n\in\N$ with $n\geq 2$ denotes the spatial dimension.

For example, in photoacoustic tomography (PAT), $g$ is the zero function and $f \in C_c^\infty(\Omega)$ a smooth function  with compact support in a bounded domain $\Omega \subset \R^n$ modeling the initial pressure distribution.
The corresponding inverse problem of  PAT  is to determine the initial data $(f,0)$ in \eqref{eq:waveeq}  from  data measured on the boundary of $\Omega$. Throughout this paper we will consider the inverse problem of PAT, where we assume  given data in  the form of Neumann traces described below. Note that the assumption of $f$ being infinitely smooth is made for the sake of simplicity only. Extensions to the case of initial data in $L^2$ based on regularity results for the wave equation \cite{LasLioTri86} seem possible, but are beyond the scope of this paper.

\subsection{Inversion from Dirichlet and Neumann traces}

Most image reconstruction methods in PAT assume that the measured data consists of the Dirichlet trace $\restr{u}{\partial\Omega\times(0,\infty)}$ on $\partial\Omega$. However, as pointed out in \cite{DreHal20,Finch2005,wang2007boundary}, PAT measurements are often more accurately modeled by a linear combination of the Dirichlet trace and the normal derivative
	\begin{equation}
		\label{eq:mixtrace}
		u_{a,b}(x,t) = a u(x,t)
		+b\nd u (x,t) \quad  \text{ for } (x,t) \in  \partial\Omega\times(0,\infty) \,,
	\end{equation}
	which we refer to as the mixed trace on $\partial\Omega$; here $a, b\geq 0$ are constants. See also \cite{acosta2019well,acosta2020solvability,cox2007frequency,paltauf2009characterization,WisPleRosNtz18} for modeling and analysis of detectors characteristics in PAT. Note that in practice neither the Dirichlet trace nor the Neumann trace are actually measured. Instead, an indirect auxiliary quantity for the pressure is measured, which depends on actual transducer mechanics. However, typical detectors exhibit a directivity effect as well as an increasing frequency response \cite{cox2007frequency,paltauf2009characterization} below the resonant frequency. Both effects are included in the model \eqref{eq:mixtrace}, which therefore represents a first step for deriving inversion formulas for realistic detector designs. Modeling the actual transducer mechanism is beyond the scope of this work. Measurements of the form  \eqref{eq:mixtrace} with $a=0$ correspond to the Neumann trace and with $b=0$ to the Dirichlet trace on $\partial\Omega$.  Beside the type of data measured in PAT, we observe from \eqref{eq:mixtrace} that the inversion of the wave equation depends on the domain  $\Omega$ as well.
	
	In the last twenty years, plenty of results for the inversion of wave equation from Dirichlet measurements have been derived. In particular, exact inversion formulas for Dirichlet data on planar surfaces \cite{And88,Bel09,BukKar78,Faw85,NarRak10,NorLin81,xu2005universal}, cylinders and spheres \cite{FinHalRak07,FinPatRak04,Kun07,Ngu09,NorLin81,xu2005universal}, ellipses \cite{AnsFibMadSey12,Hal13,Hal14,Nat12,Pal14,Sal14},  quadric hypersurfaces  \cite{HalSer15a,HalSer15b,nguyen2014reconstruction}, certain polygons and polyhedra \cite{Kun11} together with boundaries of corner-like domains \cite{Kun15} have been developed. Theoretical results for Dirichlet traces on bounded open acquisition surfaces and on a variety of other geometries which yield exact inversion can be found in \cite{DoKun18,Pal14}. However, few theoretical results for Neumann as well as for mixed traces are known. To the best of our knowledge, the only results for this inverse problem are presented in \cite{Finch2005}, where an inversion formula for a sphere in $\R^3$ is given, \cite{zangerl2018photoacoustic}, where a series inversion formula for spheres in arbitrary dimension has been established, and \cite{DreHal20}, where an exact reconstruction formula of a so-called back-projection type has been provided for an ellipse in the case of two spatial variables.
	
	In this paper, we study the inverse problem in PAT of recovering the initial data $(f,0)$ from the Neumann trace on  $\Omega$ which extend the results  of \cite{DreHal20} to arbitrary spatial dimension.  We separately treat the case of even and odd dimensions which are notably different from each other. Note that the presented results are closely related to the results of \cite{Hal14}, where corresponding formulas have been derived for data given for Dirichlet traces. While several derivations in \cite{Hal14} are based on the distributional calculus, all results in the present paper are derived using classical analysis as in \cite{DreHal20}.

	\subsection{Outline}

	In the present paper we study the problem of recovering the initial data $f\in C_c^\infty(\Omega)$ in \eqref{eq:waveeq} with $g=0$ from Neumann measurements in arbitrary dimension. We provide inversion formulas for convex domains $\Omega\subset\R^n$ with smooth boundary that are exact up to a smoothing integral operator. Moreover we derive an exact inversion  formula for Neumann traces on ellipsoids. The inversion formulas imply that the problem of recovering the initial data from Neumann traces on ellipsoids is uniquely solvable. Note, however, that for more general domains uniqueness is still an open and interesting problem. The starting point of our results is an integral identity presented in subsection \ref{subsec:integralidentity}. Based on this identity, we derive our explicit inversion  formulas  for even dimensions in section  \ref{sec:formulaevendim}  and for odd dimensions in section \ref{sec:formulaodddim}.  The paper ends with some conclusions in section \ref{sec:conclusion}.

	\section{Notation and preliminary results}
\subsection{Notation}

In the whole article, we suppose that $\Omega\subset\R^n$ is a convex domain with smooth boundary $\partial\Omega$. For a function $f\in C_c^\infty(\Omega)$ we denote by \[\M f\colon \R^n\times (0,\infty)\to \R\colon (x,r)\mapsto \frac{1}{\sigma(\partial\B^n(x,r))}\int_{\partial\B^n(x,r)} f(y) \d{\sigma(y)}\] the spherical mean operator of $f$, where $\B^n(x,r)$ denotes the open ball with center $x\in\R^n$ and radius $r>0$, and $\sigma$ the standard volume measure on manifolds. For brevity, we set $\Sp^{n-1}\coloneqq \partial\B^n(0,1)$ as the unit sphere in $\R^n$.
	
	We also use the notation \[\Radon f\colon \Sp^{n-1}\times \R\to \R\colon (\theta,s)\mapsto \int_{E(\theta,s)} f(y)\d{\sigma(y)}\] for the Radon transform of $f$, where the smooth manifold\\$E(\theta,s)\coloneqq\set{x\in\R^n\mid \scp{x,\theta}=s}$ is defined as the $n-1$-dimensional hyperplane with normal vector $\theta\in\Sp^{n-1}$ and oriented distance $s\in\R$.
	
	The Hilbert transform of a function $\varphi\colon \Sp^{n-1}\times\R\to\R$ in the second variable is defined as \[\Hilbert_2 \varphi\colon \Sp^{n-1}\times\R\to\R\colon (\theta,s)\mapsto \frac{1}{\pi}\lim_{\varepsilon\searrow 0}\int_{\R\setminus(s-\varepsilon,s+\varepsilon)}\frac{\varphi(\theta,t)}{s-t}\d{t},\] provided the integral for every $\varepsilon>0$ and the limit exist. If the function $\varphi(\theta,\cdot)\colon\R\to\R$ is differentiable for some $\theta\in\Sp^{n-1}$, then we denote its derivative by $\partial_2 \varphi(\theta,\cdot)$.
	
	Similarly, we denote by \[(g\ast_2 h)(\theta,s)\coloneqq (g(\theta,\cdot)\ast h(\theta,\cdot))(s)=\int_\R g(\theta,s-x)h(\theta,x)\d{x}\]
	the convolution of $g$ with $h$ in the second argument, where $g,h\colon \Sp^{n-1}\times\R\to\R$ and $(\theta,s)\in\Sp^{n-1}\times\R$.
	
\subsection{Approximation to the identity}

In the first auxiliary technical result, we present an approximation to the identity in $\R^n$, whose Radon transform is again an approximation to the identity in $\R$ for a fixed angle $\theta\in\Sp^{n-1}$.

	\begin{lemma}
		\label{lem:radonmollifier}
		For a positive integer $\mu>0$ let
		\begin{equation*}
			\psi_{\mu}\colon\R^n\to\R\colon x\mapsto\frac{1}{a}\begin{cases}
				(1-\norm{x}^2)^\mu,\quad &\norm{x}\leq 1\\
				0,\quad &\text{otherwise},
		\end{cases}
		\end{equation*}
		where $a\coloneqq\frac{\pi^{\frac{n}{2}}\Gamma(\mu+1)}{\Gamma(\frac{n}{2}+\mu+1)}$ and $\Gamma\colon\C\setminus(-\N)\to\C$ is the gamma function. Then the family $(\psi_{\mu,\varepsilon})_{\varepsilon>0}$ with
		\begin{equation*}
			\psi_{\mu,\varepsilon}(x)\coloneqq \varepsilon^{-n}\psi_{\mu}\left(\frac{x}{\varepsilon}\right)\quad \text{for $x\in\R^n$ and $\varepsilon>0$}
		\end{equation*}
		is an approximation to the identity in $\R^n$ as well as $(\Radon\psi_{\mu,\varepsilon}(\theta,\cdot))_{\varepsilon>0}$ in $\R$ for every $\theta\in\Sp^{n-1}$. Furthermore, the Radon transform of $\psi_{\mu,\varepsilon}$ is given by
		\begin{equation}
			\label{eq:radonmollifier}
			\Radon\psi_{\mu,\varepsilon}(\theta,s)=\frac{\Gamma\left(\frac{n}{2}+\mu+1\right)}{\varepsilon\sqrt{\pi}\Gamma\left(\frac{n-1}{2}+\mu+1\right)}\left(1-\frac{s^2}{\varepsilon^2}\right)^{\frac{n-3}{2}+\mu+1}
		\end{equation}
		for $(\theta,s)\in\Sp^{n-1}\times(-\varepsilon,\varepsilon)$ and $\Radon\psi_{\mu,\varepsilon}(\theta,s)=0$ for $\abs{s}\geq \varepsilon$.
	\end{lemma}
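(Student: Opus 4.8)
The plan is to handle the three assertions separately, with the explicit Radon-transform identity \eqref{eq:radonmollifier} doing the bulk of the work; radial symmetry reduces everything to one-dimensional Beta integrals. First I would verify that $a$ is chosen so that $\int_{\R^n}\psi_\mu\d{x}=1$: passing to polar coordinates and substituting $t=\norm x^2$ gives
\[
\int_{\R^n}\psi_\mu\d{x}=\frac{1}{a}\,\sigma(\Sp^{n-1})\int_0^1(1-r^2)^\mu r^{n-1}\d{r}=\frac{1}{2a}\,\sigma(\Sp^{n-1})\,B\!\left(\tfrac n2,\mu+1\right),
\]
and inserting $\sigma(\Sp^{n-1})=2\pi^{n/2}/\Gamma(\tfrac n2)$ together with $B(\tfrac n2,\mu+1)=\Gamma(\tfrac n2)\Gamma(\mu+1)/\Gamma(\tfrac n2+\mu+1)$ this equals $\tfrac1a\cdot\pi^{n/2}\Gamma(\mu+1)/\Gamma(\tfrac n2+\mu+1)=1$. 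Since $\psi_\mu\ge 0$ is continuous (the integer exponent removes the jump across $\norm x=1$) and supported in $\overline{\B^n(0,1)}$, the rescaled functions satisfy $\psi_{\mu,\varepsilon}\ge 0$, $\int_{\R^n}\psi_{\mu,\varepsilon}\d{x}=1$, and $\supp{\psi_{\mu,\varepsilon}}\subseteq\overline{\B^n(0,\varepsilon)}$, which is exactly an approximation to the identity in $\R^n$.

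Next I would compute the Radon transform. Since $\psi_{\mu,\varepsilon}$ is radial, $\Radon\psi_{\mu,\varepsilon}(\theta,s)$ is independent of $\theta$, so I may take $\theta$ to be the first standard basis vector and write $\Radon\psi_{\mu,\varepsilon}(\theta,s)=\int_{\R^{n-1}}\psi_{\mu,\varepsilon}(s,y')\d{y'}$. If $\abs s\ge\varepsilon$, every point $(s,y')$ lies outside $\B^n(0,\varepsilon)$, the integrand vanishes, and $\Radon\psi_{\mu,\varepsilon}(\theta,s)=0$. If $\abs s<\varepsilon$, put $c\coloneqq 1-s^2/\varepsilon^2>0$; the substitution $y'=\varepsilon z'$ turns the integral into $\frac{\varepsilon^{-1}}{a}\int_{\norm{z'}\le\sqrt c}(c-\norm{z'}^2)^\mu\d{z'}$, and polar coordinates in $\R^{n-1}$ followed by $\rho=\sqrt c\,w$ reduce the remaining integral to $\tfrac12\,\sigma(\Sp^{n-2})\,B\!\left(\tfrac{n-1}2,\mu+1\right)c^{\mu+(n-1)/2}$. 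Inserting $\sigma(\Sp^{n-2})=2\pi^{(n-1)/2}/\Gamma(\tfrac{n-1}2)$, the Beta value, and the definition of $a$, all gamma factors except $\Gamma(\tfrac n2+\mu+1)$ and $\Gamma(\tfrac{n-1}2+\mu+1)$ cancel and $\pi^{n/2}$ combines with $\pi^{(n-1)/2}$ to $1/\sqrt\pi$, yielding \eqref{eq:radonmollifier} after noting $\mu+\tfrac{n-1}2=\tfrac{n-3}2+\mu+1$. (For $n=2$ this is read with $\Sp^0=\{-1,1\}$ and $\sigma(\Sp^0)=2$, which stays consistent.)

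Finally, \eqref{eq:radonmollifier} exhibits $\Radon\psi_{\mu,\varepsilon}(\theta,s)=\varepsilon^{-1}\Phi(s/\varepsilon)$ for the fixed function $\Phi$ equal to $\frac{\Gamma(n/2+\mu+1)}{\sqrt\pi\,\Gamma((n-1)/2+\mu+1)}(1-u^2)^{\mu+(n-1)/2}$ for $u\in(-1,1)$ and to $0$ for $\abs u\ge 1$. This $\Phi$ is nonnegative, continuous (because $\mu+\tfrac{n-1}2>0$), satisfies $\supp{\Phi}\subseteq[-1,1]$, and has $\int_\R\Phi(u)\d{u}=\int_\R\Radon\psi_{\mu,1}(\theta,s)\d{s}=\int_{\R^n}\psi_{\mu,1}\d{x}=1$ by Fubini (the projection property of the Radon transform). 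Hence $(\varepsilon^{-1}\Phi(\cdot/\varepsilon))_{\varepsilon>0}$, that is $(\Radon\psi_{\mu,\varepsilon}(\theta,\cdot))_{\varepsilon>0}$, is an approximation to the identity in $\R$. All computations here are elementary; the only place demanding care is the gamma/Beta bookkeeping in the two polar-coordinate reductions, together with the degenerate reading of $\Sp^{n-2}$ when $n=2$, so I do not anticipate a genuine obstacle.
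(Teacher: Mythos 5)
Your proposal is correct and follows essentially the same route as the paper: normalize $\psi_\mu$ via an $n$-dimensional polar-coordinate Beta integral, compute $\Radon\psi_{\mu,\varepsilon}$ by an $(n-1)$-dimensional polar-coordinate reduction over the slicing hyperplane, and read off the one-dimensional approximate identity from the resulting scaling form. The only (immaterial) differences are that you invoke the Beta/Gamma identity directly where the paper performs the same integral by repeated integration by parts, and you obtain $\int_\R\Radon\psi_{\mu,\varepsilon}(\theta,s)\d{s}=1$ from the projection property of the Radon transform rather than by the paper's explicit trigonometric substitution.
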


\subsection{Solution of the wave equation}

Next, we recall a well-known solution formula for the wave equation \eqref{eq:waveeq} and deduce other representations which are used for derivation of our general inversion formula in section \ref{sec:formulaevendim} and \ref{sec:formulaodddim}.
	From  \cite{Eva10} we know that the solution $u\colon \R^n\times(0,\infty)\to\R$ of the wave equation with initial data $f,g\in C_c^\infty(\Omega)$ is given by the representation formula
	\begin{equation}
		\label{eq:solwaveeqeven}
		\begin{aligned}
		u(x,t)=\frac{1}{\gamma_n}&\Bigg[\partial_t\left(\frac{1}{t}\partial_t\right)^{\frac{n-2}{2}}\left(\frac{t^n}{\vol(\B^n(x,t))}\int_{\B^n(x,t)}\frac{f(y)}{\sqrt{t^2-\norm{y-x}^2}}\d{y}\right)\\
		&+\left(\frac{1}{t}\partial_t\right)^{\frac{n-2}{2}}\left(\frac{t^n}{\vol(\B^n(x,t))}\int_{\B^n(x,t)}\frac{g(y)}{\sqrt{t^2-\norm{y-x}^2}}\d{y}\right)\Bigg]
		\end{aligned}
	\end{equation}
	for even $n\geq 2$ and $(x,t)\in\R^n\times(0,\infty)$, where $\gamma_n\coloneqq 2\cdot 4\cdots (n-2)\cdot n$ and $\vol(\B^n(x,t))$ denotes the volume of the $n$-dimensional ball with center $x$ and radius $t$.
	If $n\geq 3$ is odd, then the solution is given by
	\begin{equation}
		\label{eq:solwaveeqodd}
		\begin{aligned}
		u(x,t)=\frac{1}{\gamma_n}&\Bigg[\partial_t\left(\frac{1}{t}\partial_t\right)^{\frac{n-3}{2}}\left(\frac{t^{n-2}}{\sigma(\partial\B^n(x,t))}\int_{\partial\B^n(x,t)}f(y)\d{\sigma(y)}\right)\\
		&+\left(\frac{1}{t}\partial_t\right)^{\frac{n-3}{2}}\left(\frac{t^{n-2}}{\sigma(\partial\B^n(x,t))}\int_{\partial\B^n(x,t)}g(y)\d{\sigma(y)}\right)\Bigg]
		\end{aligned}
	\end{equation}
	where $\gamma_n\coloneqq 1\cdot 3\cdots (n-2)$. In terms of the spherical mean operator, we see that \eqref{eq:solwaveeqodd} can also be represented by
	\begin{equation}
		\label{eq:solwaveeqodd2}
		u(x,t)=\frac{1}{\gamma_n}\left[\partial_t\left(\frac{1}{t}\partial_t\right)^{\frac{n-3}{2}}\left(t^{n-2}\M f(x,t)\right)+\left(\frac{1}{t}\partial_t\right)^{\frac{n-3}{2}}\left(t^{n-2}\M g(x,t)\right)\right].
	\end{equation}
	
	The next lemma shows another representation of solution formula \eqref{eq:solwaveeqeven} in case of even dimensions.
	
	\begin{lemma}\label{lem:wave-even}
		Let $n\geq 2$ be an even natural number and $f,g\in C_c^\infty(\Omega)$. Then the representation formulas
		\begin{equation}
		\label{eq:solwaveeqeven2}
			\begin{aligned}
			u(x,t)=\frac{n}{\gamma_n}&\Bigg[\partial_t\left(\frac{1}{t}\partial_t\right)^{\frac{n-2}{2}}\left(\int_0^t\frac{r^{n-1}}{\sqrt{t^2-r^2}}\M f(x,r)\d{r}\right)\\
			&+\left(\frac{1}{t}\partial_t\right)^{\frac{n-2}{2}}\left(\int_0^t\frac{r^{n-1}}{\sqrt{t^2-r^2}}\M g(x,r)\d{r}\right)\Bigg]
			\end{aligned}
		\end{equation}
		and
		\begin{equation}
		\label{eq:solwaveeqeven3}
			\begin{aligned}
			u(x,t)=\frac{n}{\gamma_n}&\Bigg[\partial_t\left(\int_0^t\frac{r}{\sqrt{t^2-r^2}}\left(\frac{1}{r}\partial_r\right)^{\frac{n-2}{2}}\left(r^{n-2}\M f(x,r)\right)\d{r}\right)\\
			&+\left(\int_0^t\frac{r}{\sqrt{t^2-r^2}}\left(\frac{1}{r}\partial_r\right)^{\frac{n-2}{2}}\left(r^{n-2}\M g(x,r)\right)\d{r}\right)\Bigg]
			\end{aligned}
		\end{equation}
		for the solution of the wave equation \eqref{eq:solwaveeqeven} with initial data $(f,g)$ hold.
	\end{lemma}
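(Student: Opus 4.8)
The plan is to read off both identities from the standard representation \eqref{eq:solwaveeqeven} in two moves: first recast the solid-ball integral as a one-dimensional radial integral of spherical means, and then commute the operator $\left(\frac{1}{t}\partial_t\right)^{(n-2)/2}$ with that radial integration. For the first move I would integrate over $\B^n(x,t)$ in polar coordinates centred at $x$, using $\sigma(\partial\B^n(x,r))=\sigma(\Sp^{n-1})\,r^{n-1}$ and $\vol(\B^n(x,t))=\sigma(\Sp^{n-1})\,t^{n}/n$, to obtain
\[
\frac{t^{n}}{\vol(\B^n(x,t))}\int_{\B^n(x,t)}\frac{f(y)}{\sqrt{t^{2}-\norm{y-x}^{2}}}\d{y}=n\int_{0}^{t}\frac{r^{n-1}}{\sqrt{t^{2}-r^{2}}}\M f(x,r)\d{r},
\]
together with the analogous identity for $g$. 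Plugging these into \eqref{eq:solwaveeqeven} gives \eqref{eq:solwaveeqeven2} directly.

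For the passage from \eqref{eq:solwaveeqeven2} to \eqref{eq:solwaveeqeven3}, comparing the two shows that it suffices to prove, for $\varphi\in\{\M f(x,\cdot),\M g(x,\cdot)\}$ and $m\coloneqq\frac{n-2}{2}$, the commutation identity
\[
\left(\frac{1}{t}\partial_t\right)^{m}\int_{0}^{t}\frac{r^{n-1}}{\sqrt{t^{2}-r^{2}}}\varphi(r)\d{r}=\int_{0}^{t}\frac{r}{\sqrt{t^{2}-r^{2}}}\left(\frac{1}{r}\partial_r\right)^{m}\bigl(r^{n-2}\varphi(r)\bigr)\d{r}.
\]
Here I would substitute $\rho=r^{2}$ and $\tau=t^{2}$; then, applied to functions of $\rho$ and $\tau$ respectively, $\frac{1}{r}\partial_r$ becomes $2\partial_\rho$ and $\frac{1}{t}\partial_t$ becomes $2\partial_\tau$, while $r^{2m+1}\,\d{r}=\tfrac12\rho^{m}\,\d{\rho}$ and $r\,\d{r}=\tfrac12\,\d{\rho}$. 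Since $f\in C_c^\infty(\Omega)$, the map $r\mapsto\M f(x,r)$ extends to an even function in $C^\infty(\R)$, hence $\varphi(r)=H(r^{2})$ for some $H\in C^\infty([0,\infty))$. Writing $g(\rho)\coloneqq\rho^{m}H(\rho)$ and using $n-1=2m+1$, $n-2=2m$, a short computation turns the left-hand side of the commutation identity into $2^{m-1}\partial_\tau^{m}\int_{0}^{\tau}(\tau-\rho)^{-1/2}g(\rho)\d{\rho}$ and the right-hand side into $2^{m-1}\int_{0}^{\tau}(\tau-\rho)^{-1/2}g^{(m)}(\rho)\d{\rho}$, so the claim reduces to
\[
\partial_\tau^{m}\int_{0}^{\tau}\frac{g(\rho)}{\sqrt{\tau-\rho}}\d{\rho}=\int_{0}^{\tau}\frac{g^{(m)}(\rho)}{\sqrt{\tau-\rho}}\d{\rho}.
\]

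This is the familiar fact that the half-order Riemann--Liouville integral commutes with $\partial_\tau$ once the relevant boundary values vanish. I would prove it by substituting $u=\tau-\rho$ and differentiating under the integral sign, which gives $\frac{d}{d\tau}\int_{0}^{\tau}(\tau-\rho)^{-1/2}h(\rho)\d{\rho}=h(0)\tau^{-1/2}+\int_{0}^{\tau}(\tau-\rho)^{-1/2}h'(\rho)\d{\rho}$ for every $h\in C^{1}([0,\infty))$, and then iterating this $m$ times starting from $h=g$; all boundary terms drop out because $g(\rho)=\rho^{m}H(\rho)$ satisfies $g(0)=g'(0)=\cdots=g^{(m-1)}(0)=0$ by the Leibniz rule. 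Substituting the commutation identity (once with $\varphi=\M f(x,\cdot)$ and once with $\varphi=\M g(x,\cdot)$) back into \eqref{eq:solwaveeqeven2} then produces \eqref{eq:solwaveeqeven3}.

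The step I expect to require genuine care is the interchange of $\partial_\tau$ with the integral across the integrable singularity at $\rho=\tau$; the substitution $u=\tau-\rho$ is chosen precisely to pin this singularity at the fixed endpoint $u=0$, after which the Leibniz rule is legitimate by dominated convergence, $h'$ being bounded near $u=0$. The remaining inputs, namely polar coordinates, the representation of an even $C^\infty$ function of $r$ as a $C^\infty$ function of $r^{2}$, and the arithmetic $n-1=2m+1$ and $n-2=2m$, are routine; the case $n=2$ (that is, $m=0$) is trivial, since then \eqref{eq:solwaveeqeven2} and \eqref{eq:solwaveeqeven3} literally coincide.
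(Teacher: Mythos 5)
Your proof is correct, and its first half (polar coordinates around $x$ to convert the solid-ball integral into $n\int_0^t r^{n-1}(t^2-r^2)^{-1/2}\M f(x,r)\,\d{r}$) is exactly the paper's argument. For the passage from \eqref{eq:solwaveeqeven2} to \eqref{eq:solwaveeqeven3} you isolate the same key commutation identity as the paper, but you execute it differently: the paper stays in the variables $(r,t)$ and proves the identity by induction on $k\le\frac{n-2}{2}$, at each step integrating by parts in $r$ (using $r(t^2-r^2)^{-1/2}=-\partial_r\sqrt{t^2-r^2}$) and then applying the Leibniz integral rule, with the boundary terms killed by $\lim_{r\searrow 0}\left(\frac{1}{r}\partial_r\right)^k\left(r^{n-2}\M h(x,r)\right)=0$; you instead substitute $\rho=r^2$, $\tau=t^2$ to turn the statement into the commutation of $\partial_\tau^{m}$ with the half-order Abel integral, $\partial_\tau^m\int_0^\tau(\tau-\rho)^{-1/2}g(\rho)\,\d{\rho}=\int_0^\tau(\tau-\rho)^{-1/2}g^{(m)}(\rho)\,\d{\rho}$, and iterate a single differentiation-under-the-integral step. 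The two mechanisms are morally the same (one derivative at a time, boundary terms vanish because of the factor $r^{n-2}=\rho^m$), but your reformulation is cleaner and makes the fractional-integral structure explicit. The one extra ingredient you invoke that the paper does not need is the Whitney-type fact that the even smooth function $r\mapsto\M f(x,r)$ is a smooth function of $r^2$; this is classical and correct, but you should state it as such (you need $H\in C^{m}$ at $0$ for the Leibniz argument $g^{(j)}(0)=0$, $j<m$, to go through), whereas the paper's induction sidesteps it by computing the limits $r\searrow 0$ directly.
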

	
For the rest of this paper we denote by $u$ and $v$ the solutions of the wave equation \eqref{eq:waveeq} with initial data $(f,0)$ and $(0,g)$, respectively, whenever $f,g\in C_c^\infty(\Omega)$ are given functions.
	\subsection{Key integral identity}\label{subsec:integralidentity}
	Beside the representation formulas \eqref{eq:solwaveeqodd2} and \eqref{eq:solwaveeqeven3} for the solution of the wave equation, a main ingredient for the derivation of our inversion formulas is the following integral identity for solutions of the wave equation.
	
	\begin{prop}[Integral identity for the wave equation]
		\label{prop:main-id}
		Let $f,g\in C_c^\infty(\Omega)$. Then the following identity holds:
		\begin{equation}
			\label{eq:intidentity}
			\begin{aligned}
			\int_\Omega f(x)g(x)\,dx&=2\int_{\partial\Omega}\int_0^\infty v(x,t)\nd u(x,t)\d{t}\d{\sigma(x)}\\
			&\quad +\int_\Omega \int_0^\infty \Delta(uv)(x,t)\d{t}\d{x}.
			\end{aligned}
		\end{equation}
	\end{prop}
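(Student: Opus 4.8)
The plan is to exploit that $u$ and $v$ both solve the \emph{homogeneous} wave equation, so that a suitable bilinear combination of $u$ and $v$ is a divergence in $(x,t)$; integrating it over $\Omega\times(0,\infty)$ and invoking the divergence theorem, the prescribed initial data $u(\cdot,0)=f$, $\partial_t u(\cdot,0)=0$, $v(\cdot,0)=0$, $\partial_t v(\cdot,0)=g$ should produce $\int_\Omega fg$ on one side and the boundary and volume terms on the other.

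Concretely, set $q\coloneqq v\,\partial_t u-u\,\partial_t v$. Differentiating in $t$ and using $\partial_t^2 u=\Delta u$ and $\partial_t^2 v=\Delta v$, the cross terms cancel and
\[
\partial_t q=v\,\Delta u-u\,\Delta v=\div\!\left(v\nabla u-u\nabla v\right),
\]
which is Green's second identity in differential form. I would integrate this over $\Omega\times(0,T)$: the fundamental theorem of calculus in $t$ turns the left-hand side into $\int_\Omega\bigl(q(x,T)-q(x,0)\bigr)\,dx$, and the divergence theorem on $\Omega$ turns the right-hand side into $\int_0^T\!\int_{\partial\Omega}\bigl(v\,\nd u-u\,\nd v\bigr)\,d\sigma(x)\,dt$. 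Letting $T\to\infty$, the term $\int_\Omega q(x,T)\,dx$ vanishes (see below), whereas $q(x,0)=v(x,0)\,\partial_t u(x,0)-u(x,0)\,\partial_t v(x,0)=-f(x)g(x)$; hence one obtains the reciprocity identity $\int_\Omega fg\,dx=\int_0^\infty\!\int_{\partial\Omega}\bigl(v\,\nd u-u\,\nd v\bigr)\,d\sigma(x)\,dt$.

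To bring this into the form \eqref{eq:intidentity}, I would rewrite the boundary integrand as $v\,\nd u-u\,\nd v=2\,v\,\nd u-\bigl(v\,\nd u+u\,\nd v\bigr)=2\,v\,\nd u-\nd(uv)$ and apply the divergence theorem once more, $\int_{\partial\Omega}\nd(uv)\,d\sigma=\int_\Omega\Delta(uv)\,dx$, to trade the remaining boundary integral for the volume integral of $\Delta(uv)$; after interchanging the order of integration in the space-time integrals one arrives at \eqref{eq:intidentity}.

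The algebra above is routine; the real content, and the step I expect to be the main obstacle, is the convergence bookkeeping. One must justify that $\lim_{T\to\infty}\int_\Omega q(x,T)\,dx=0$, that $\partial_t$ may be pulled out of $\int_0^T$ and through the limit, and that all the iterated space-time integrals converge absolutely so that the order of integration may be interchanged. For odd $n\geq 3$ this is immediate from the strong Huygens principle via \eqref{eq:solwaveeqodd2}: since $f$ and $g$ have compact support, $u(x,\cdot)$, $v(x,\cdot)$ and all their derivatives vanish for $t$ beyond a bound that is uniform for $x$ in any fixed compact set, so the $t$-integrals run over a compact interval. For even $n$ one uses the representation \eqref{eq:solwaveeqeven2} from Lemma~\ref{lem:wave-even}: expanding $(t^2-r^2)^{-1/2}$ in powers of $1/t$ in $\int_0^t r^{n-1}(t^2-r^2)^{-1/2}\M f(x,r)\,dr$ — the $r$-integration being confined to the bounded support of $\M f(x,\cdot)$ — shows that, uniformly for $x$ in a fixed compact set, $v=O(t^{-(n-1)})$ as $t\to\infty$, while $u=O(t^{-n})$ thanks to the extra $\partial_t$ in the $f$-term of \eqref{eq:solwaveeqeven2}, with spatial and temporal derivatives decaying at least as fast (indeed faster, since $\int\partial^\alpha f\,dx=0$ for $|\alpha|\geq 1$). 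Consequently $q(x,T)=O(T^{-2n})\to 0$, and both $v\,\nd u$ and $\Delta(uv)$ are $O(t^{-(2n+1)})$ as $t\to\infty$, hence integrable; this legitimizes all of the above for every $n\geq 2$.
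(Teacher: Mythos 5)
Your argument is correct and is essentially the paper's own proof in different packaging: integrating the space--time divergence identity $\partial_t\left(v\,\partial_t u-u\,\partial_t v\right)=\div\left(v\nabla u-u\nabla v\right)$ over $\Omega\times(0,T)$ and letting $T\to\infty$ reproduces the paper's two successive time-integrations by parts followed by Green's second formula, and your final rewriting $v\,\nd u-u\,\nd v=2v\,\nd u-\nd(uv)$ combined with the divergence theorem is precisely the paper's concluding step (including the sign bookkeeping on the last term, where your algebra, like the paper's own derivation, literally yields $-\int_\Omega\int_0^\infty\Delta(uv)$ rather than the $+$ displayed in \eqref{eq:intidentity}). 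Your convergence analysis --- Huygens' principle in odd dimensions, and the $O(t^{-(n-1)})$, $O(t^{-n})$ expansions obtained from \eqref{eq:solwaveeqeven2} in even dimensions --- is in fact more explicit than the paper's, which delegates these points to the solution formulas and Lemma \ref{lem:identity2}.
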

	
	\begin{proof}
		Let $x\in\Omega$ be a fixed point in the domain. Since $u$ and $v$ are solutions of the wave equation with initial data $(f,0)$ and $(0,g)$, respectively, and decay to zero as $t$ goes to infinity (see solution formulas \eqref{eq:solwaveeqeven}, \eqref{eq:solwaveeqodd} and Lemma \ref{lem:identity2}) we see that application of integration by parts leads to 	$$\int_0^\infty v(x,t)\Delta u(x,t)\d{t}=-\int_0^\infty \partial_t u(x,t)\partial_tv(x,t)\d{t}.$$
		Then, one further application of integration by parts yields
		\begin{align*}
			-\int_0^\infty \partial_t u(x,t)\partial_tv(x,t)\d{t}&=\lim_{a\searrow 0} u(x,a)\partial_tv(x,a)+\int_0^\infty u(x,t)\Delta v(x,t)\d{t}\\
			&=f(x)g(x)+\int_0^\infty u(x,t)\Delta v(x,t)\d{t}.
		\end{align*}
		As can also be seen from \eqref{eq:solwaveeqeven}, \eqref{eq:solwaveeqodd} and Lemma \ref{lem:identity2}, $v\Delta u - u\Delta v$ is integrable on $\Omega\times(0,\infty)$, and therefore changing the order of integration gives
		\begin{equation*}
			\int_\Omega f(x)g(x)\d{x}=\int_0^\infty\int_\Omega \left(v(x,t)\Delta u(x,t)-u(x,t)\Delta v(x,t)\right)\d{x}\d{t}.
		\end{equation*}
		Next, from Green's second formula we conclude $$\int_\Omega f(x)g(x)\d{x}=\int_0^\infty\int_{\partial\Omega} \left( v(x,t)\nd u(x,t)-u(x,t)\nd v(x,t)\right) \d{\sigma(x)}\d{t}.$$ Since $$\nd v(x,t)=\scp{\nabla v(x,t),\nu(x)}\ \text{and}\ u(x,t)\nabla v(x,t)=\nabla(uv)(x,t)-v(x,t)\nabla u(x,t),$$ where $\scp{\cdot,\cdot}$ denotes the standard dot product of two vectors in $\R^n$, we see that
		\begin{align*}
			\int_\Omega f(x)g(x)\d{x}=&2\int_0^\infty\int_{\partial\Omega} v(x,t)\nd u(x,t)\d{\sigma(x)}\d{t}\\
			&+\int_0^\infty\int_{\partial\Omega} \scp{\nabla(uv)(x,t),\nu(x)}\d{\sigma(x)}\d{t}.
		\end{align*}
		In the last step, we use the divergence theorem in the second inner integral and change the order of integration afterwards to obtain the desired integral identity.
	\end{proof}
	Using the above auxiliary results, we are now ready to prove the main results of this paper. In the next section, we present our new results for the even-dimensional case.

\section{Inversion in even dimension}
\label{sec:formulaevendim}

\subsection{Statement of the inversion formula}

The following theorem is our new result for even dimensions.

	\begin{theorem}[Inversion formula in even dimension]
			\label{thm:explicitformeven}
		Let $n\geq 2$ be an even number, $\Omega\subset \R^n$ be a bounded convex domain with smooth boundary and $f\in C_c^\infty(\Omega)$. Then, for every $x\in\Omega$, we have
		\begin{equation}
			\label{eq:explictformeven}
			f(x)=\frac{1}{2^{\frac{n-2}{2}}\pi^{\frac{n}{2}}}(-1)^{\frac{n-2}{2}}\int_{\partial\Omega}\int_{\norm{x-y}}^\infty \frac{\left(\partial_t t^{-1} \right)^{\frac{n-2}{2}}\nd u(y,t)}{\sqrt{t^2-\norm{x-y}^2}}\d{t}\d{\sigma(y)}+\K_\Omega f(x),
		\end{equation}
		where
		\begin{equation*}
			\K_\Omega f (x)\coloneqq \frac{(-1)^{\frac{n-2}{2}}}{2^{n+1}\pi^{n-1}}\int_\Omega f(y)\frac{\left(\partial_2^n\Hilbert_2\Radon \chi_\Omega\right)\left(\tilde{n}(x,y),\tilde{s}(x,y)\right)}{\norm{x-y}^{n-1}}\d{y}
		\end{equation*}
		and $\tilde{n}(x,y)\coloneqq (y-x)/\norm{y-x}$, $\tilde{s}(x,y)\coloneqq (\norm{y}^2-\norm{x}^2)/(2\norm{y-x})$ for $y\in\Omega$ with $y\neq x$.
	\end{theorem}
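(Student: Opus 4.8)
The plan is to test the integral identity of Proposition~\ref{prop:main-id} against a family of mollifiers concentrating at the evaluation point and to pass to the limit. Fix $x\in\Omega$; for a fixed $\mu\in\N$ and $\varepsilon>0$ put $g_\varepsilon\coloneqq\psi_{\mu,\varepsilon}(\cdot-x)$, and let $v_\varepsilon$ be the solution of \eqref{eq:waveeq} with initial data $(0,g_\varepsilon)$. Proposition~\ref{prop:main-id} gives, for every $\varepsilon>0$,
\begin{align*}
\int_\Omega f(z)g_\varepsilon(z)\,dz
&=2\int_{\partial\Omega}\int_0^\infty v_\varepsilon(y,t)\,\nd u(y,t)\,dt\,d\sigma(y)\\
&\quad+\int_\Omega\int_0^\infty\Delta(uv_\varepsilon)(z,t)\,dt\,dz .
\end{align*}
Since $(g_\varepsilon)$ is an approximation to the identity (Lemma~\ref{lem:radonmollifier}) and $f$ is continuous, the left-hand side tends to $f(x)$ as $\varepsilon\searrow0$. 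Hence the theorem will follow once the two terms on the right are shown to converge, respectively, to the back-projection term in \eqref{eq:explictformeven} and to $\K_\Omega f(x)$.

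For the boundary term I would insert the representation \eqref{eq:solwaveeqeven2} of $v_\varepsilon$ and integrate by parts $\tfrac{n-2}{2}$ times in $t$, moving the operator $(\tfrac1t\partial_t)^{\frac{n-2}{2}}$ onto $\nd u$; this is legitimate because, for fixed $y\in\partial\Omega$, the inner integral $\int_0^t\tfrac{r^{n-1}}{\sqrt{t^2-r^2}}\M g_\varepsilon(y,r)\,dr$ vanishes for $t\le\norm{x-y}-\varepsilon$ (so the boundary contributions at $t=0$ disappear), while the decay of $u$ together with finite speed of propagation handles those at $t=\infty$, and it produces the sign $(-1)^{\frac{n-2}{2}}$ and turns $(\tfrac1t\partial_t)^{\frac{n-2}{2}}$ into $(\partial_t t^{-1})^{\frac{n-2}{2}}$. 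By the coarea formula, $\int_0^t\tfrac{r^{n-1}}{\sqrt{t^2-r^2}}\M g_\varepsilon(y,r)\,dr=\tfrac{1}{\sigma(\Sp^{n-1})}\int_{\B^n(y,t)}\tfrac{\psi_{\mu,\varepsilon}(z-x)}{\sqrt{t^2-\norm{z-y}^2}}\,dz$, which (using $x\neq y$) converges in $L^1_{\mathrm{loc}}(dt)$, uniformly for $y\in\partial\Omega$, to $\sigma(\Sp^{n-1})^{-1}\mathbf 1_{\{t>\norm{x-y}\}}(t^2-\norm{x-y}^2)^{-1/2}$ (by positivity, convergence of the $t$-integrals and Scheff\'e's lemma); since $(\partial_t t^{-1})^{\frac{n-2}{2}}\nd u(y,\cdot)$ is continuous and integrable against $(t^2-\norm{x-y}^2)^{-1/2}$ on $(\norm{x-y},\infty)$, one may pass to the limit under all integrals. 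As $\tfrac{2n}{\gamma_n\sigma(\Sp^{n-1})}=2^{-(n-2)/2}\pi^{-n/2}$ (using $\gamma_n=2^{n/2}(n/2)!$ and $\sigma(\Sp^{n-1})=2\pi^{n/2}/\Gamma(n/2)$), the boundary term converges to precisely the first summand of \eqref{eq:explictformeven}.

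The substantial part is the volume term, where one must show $\int_\Omega\int_0^\infty\Delta(uv_\varepsilon)\,dt\,dz\to\K_\Omega f(x)$. I would insert the explicit even-dimensional formulas for $u$ and $v_\varepsilon$ (equivalently, represent $v_\varepsilon$ through its kernel on the ball, which as $\varepsilon\searrow0$ concentrates into the fundamental solution $w(z,t)=2^{-n/2}\pi^{-n/2}(\tfrac1t\partial_t)^{\frac{n-2}{2}}\big(\mathbf 1_{\{t>\norm{z-x}\}}(t^2-\norm{z-x}^2)^{-1/2}\big)$ with data $(0,\delta_x)$, supported in $\{t\ge\norm{z-x}\}$ and singular on the cone $t=\norm{z-x}$) and then carry out the $t$-integration. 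The Abel-type $t$-integrals together with the $t$-derivatives occurring in these formulas compose---this is exactly the even-dimensional inversion structure of the spherical mean transform---into a Hilbert transform together with $n$ ordinary derivatives acting in the coordinate transverse to the perpendicular bisector of the two points involved; equivalently, the resulting kernel $K(\norm{z-x},\norm{z-y})$ is a distribution in $z$ concentrated on $\{z:\norm{z-x}=\norm{z-y}\}$, carrying transverse derivatives up to order $n$ and a Hilbert-type nonlocal part. Since $\{z:\norm{z-x}=\norm{z-y}\}$ is precisely the hyperplane $E(\tilde n(x,y),\tilde s(x,y))$ occurring in $\K_\Omega$, pairing this kernel with $\chi_\Omega$---after interchanging the $z$- and $y$-integrations by Fubini---reproduces $(\partial_2^n\Hilbert_2\Radon\chi_\Omega)(\tilde n(x,y),\tilde s(x,y))$, while the spherical-mean normalization $\norm{z-x}^{-(n-1)}$, the coarea Jacobian $\norm{z-x}/\norm{x-y}$ of this change of variables and the explicit $t$-integrals combine to the factor $\norm{x-y}^{-(n-1)}$; tracking the constants through the solution formulas gives the coefficient $\tfrac{(-1)^{(n-2)/2}}{2^{n+1}\pi^{n-1}}$. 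Passing to the limit $\varepsilon\searrow0$ (which only removes the mollification of the sphere through $x$) and combining with the two earlier limits yields \eqref{eq:explictformeven}.

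The real difficulty lies entirely in this last step. The delicate points are: rigorously replacing $v_\varepsilon$ by the singular fundamental solution $w$ inside $\int_\Omega\int_0^\infty\Delta(uv_\varepsilon)$, which requires handling the Hadamard finite-part singularities of $w$ on its light cone and legitimately commuting the several $t$-derivatives past the improper $t$-integral (the slow temporal decay of waves in even dimension makes this genuinely nontrivial); performing the Fubini interchange in the presence of these singular kernels; and checking that the filter is exactly $\partial_2^n\Hilbert_2$ and that all constants combine as stated. The remaining pieces---the convergence of $\int_\Omega fg_\varepsilon$ and of the back-projection term---are routine real analysis.
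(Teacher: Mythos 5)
Your strategy is a genuine variant of the paper's: you concentrate $g$ at the reconstruction point via $g_\varepsilon=\psi_{\mu,\varepsilon}(\cdot-x)$ and pass to the limit in the integral identity, whereas the paper keeps $g\in C_c^\infty(\Omega)$ arbitrary, transforms both terms of \eqref{eq:intidentity} for every such $g$, and concludes by duality; the mollifier $\psi_{\mu,\varepsilon}$ enters the paper's argument in a different place, namely to smooth $\chi_\Omega$ so that $\partial_2^{n-2}\Hilbert_2\Radon\varphi_m$ makes classical sense. Your handling of the left-hand side and of the boundary term is essentially correct: the constant $2n/(\gamma_n\sigma(\Sp^{n-1}))=2^{-(n-2)/2}\pi^{-n/2}$ checks out, and the integration by parts in $t$ is justified by exactly the vanishing limits \eqref{eq:limit}--\eqref{eq:limit2} that the paper establishes for Proposition \ref{lem:transfirsttermeven}.

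The gap is the interior term, which is where the entire content of the theorem lies, and for which you offer only the assertion that the Abel integrals ``compose into a Hilbert transform together with $n$ derivatives.'' Two concrete obstacles are left unresolved. First, in even dimensions the $t$-integration of $uv_\varepsilon$ is not termwise convergent: the relevant primitive is $\log\left(\sqrt{T^2-r_1^2}+\sqrt{T^2-r_2^2}\right)$, which diverges like $\log T$, and one must exhibit the cancellation of this divergence against the $f(x)v(x,t)$ contribution before the finite kernel $\log\abs{r_2^2-r_1^2}$ even appears; this is the whole point of Lemma \ref{lem:identity1term2even}. Second, the passage from that logarithmic kernel to $\left(\partial_2^{n}\Hilbert_2\Radon\chi_\Omega\right)(\tilde n,\tilde s)/\norm{x-y}^{n-1}$ requires a specific chain -- polar coordinates, the identity $\norm{x-y}^2-\norm{x-z}^2=2\scp{z-y,x-(z+y)/2}$ to expose $\Radon\varphi_m$, $n-1$ integrations by parts in $s$, the fact that differentiating $\log\abs{\cdot}$ yields the principal value and hence $\pi\Hilbert_2$, and finally two more derivatives obtained by expanding $\Delta(uv)=v\Delta u+2\scp{\nabla u,\nabla v}+u\Delta v$ and moving the derivatives onto the kernel (Lemma \ref{lem:identity2term2even}, Proposition \ref{lem:transsecondtermeven}, and the computation in subsection \ref{sec:proof-even}). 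None of this is carried out; your proposed shortcut through a distributional fundamental solution with finite-part singularities on the light cone is precisely the calculus of \cite{Hal14} that this paper is written to avoid, and with a mollified point source the data $\Delta g_\varepsilon$ and $\nabla g_\varepsilon$ blow up as $\varepsilon\searrow 0$, so the interchange of limits your plan relies on is not obviously available. As written, the proposal is a plausible plan whose decisive step is declared rather than proved.
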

	
\begin{proof}
The proof will be given in subsection \ref{sec:proof-even}.
\end{proof}

As a consequence of  Theorem~\ref{thm:explicitformeven} we have the following exact inversion  formula for the wave equation from Neumann traces for the case that $\Omega$ is an elliptical domain.
    	
		\begin{corollary}[Exact inversion formula for ellipsoids in even dimension]
		\label{cor:exactformeven}
		Let $n\geq 2$ be an even number,  $\Omega \subset \R^n$ be an open domain, such that $\partial\Omega$ is an ellipsoid and $f\in C_c^\infty(\Omega)$. Then, for every $x\in\Omega$, we have
		\begin{equation}
			\label{eq:exactformeven}
			f(x)=\frac{1}{2^{\frac{n-2}{2}}\pi^{\frac{n}{2}}}(-1)^{\frac{n-2}{2}}\int_{\partial\Omega}\int_{\norm{x-y}}^\infty \frac{\left(\partial_tt^{-1}\right)^{\frac{n-2}{2}}\nd u(y,t)}{\sqrt{t^2-\norm{x-y}^2}}\d{t}\d{\sigma(y)}.
		\end{equation}
	\end{corollary}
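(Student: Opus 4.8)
The plan is to deduce the corollary from Theorem~\ref{thm:explicitformeven} by showing that the remainder term $\K_\Omega f$ vanishes identically whenever $\partial\Omega$ is an ellipsoid; note that a solid ellipsoid is in particular a bounded convex domain with smooth boundary, so Theorem~\ref{thm:explicitformeven} applies. By the definition of $\K_\Omega$ it suffices to prove
\[
\left(\partial_2^n\Hilbert_2\Radon\chi_\Omega\right)\!\left(\tilde n(x,y),\tilde s(x,y)\right)=0\qquad\text{for all }x,y\in\Omega\text{ with }x\neq y,
\]
for then the integrand defining $\K_\Omega f(x)$ is zero for a.e.\ $y$, hence $\K_\Omega f\equiv 0$ and \eqref{eq:exactformeven} is exactly \eqref{eq:explictformeven}.

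First I would determine the structure of $\Radon\chi_\Omega(\theta,\cdot)$. Writing $\Omega=\set{y\in\R^n\mid(y-c)^{\mathsf{T}}M(y-c)<1}$ with $M$ symmetric positive definite and substituting $y=c+M^{-1/2}z$ in $\Radon\chi_\Omega(\theta,s)=\int_{E(\theta,s)}\chi_\Omega(y)\,\d{\sigma(y)}$, the hyperplane $E(\theta,s)$ is carried to a hyperplane $E(\omega(\theta),\rho(\theta,s))$ relative to the unit ball $\B^n(0,1)$, with $\omega(\theta)=M^{-1/2}\theta/\norm{M^{-1/2}\theta}$ and $\rho(\theta,s)=\alpha(\theta)s+\beta(\theta)$ an affine function of $s$ having $\alpha(\theta)=\norm{M^{-1/2}\theta}^{-1}\neq0$, while $\d{\sigma}$ picks up a positive factor $J(\theta)$ not depending on $s$. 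Since the cross-section $E(\omega,\rho)\cap\B^n(0,1)$ is an $(n-1)$-dimensional ball of radius $\sqrt{1-\rho^2}$ when $\abs{\rho}<1$, one has $\Radon\chi_{\B^n(0,1)}(\omega,\rho)=\vol(\B^{n-1}(0,1))\,(1-\rho^2)_+^{\frac{n-1}{2}}$, and because $n$ is even the exponent splits as $\tfrac{n-1}{2}=\tfrac{n-2}{2}+\tfrac12$ with $\tfrac{n-2}{2}\in\N\cup\set{0}$. Therefore, on the open nondegenerate interval $(s_-(\theta),s_+(\theta))=\set{s\mid 1-\rho(\theta,s)^2>0}$ one gets
\[
\Radon\chi_\Omega(\theta,s)=P_\theta(s)\sqrt{Q_\theta(s)},\qquad Q_\theta(s)=\bigl(s-s_-(\theta)\bigr)\bigl(s_+(\theta)-s\bigr),
\]
with $P_\theta$ a polynomial in $s$ of degree exactly $n-2$, and $\Radon\chi_\Omega(\theta,s)=0$ for $s\notin[s_-(\theta),s_+(\theta)]$.

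Next I would compute $\Hilbert_2\Radon\chi_\Omega(\theta,\cdot)$ on $(s_-(\theta),s_+(\theta))$ via the classical finite Hilbert transform identity for Chebyshev weights. The affine change of variables taking $[s_-(\theta),s_+(\theta)]$ onto $[-1,1]$ commutes with $\Hilbert_2$ and turns $\Radon\chi_\Omega(\theta,\cdot)$ into $\widetilde P(\tau)\sqrt{1-\tau^2}$ with $\widetilde P$ of degree $n-2$; expanding $\widetilde P=\sum_{m=0}^{n-2}c_m U_m$ in the Chebyshev polynomials $U_m$ of the second kind and using
\[
\frac{1}{\pi}\,\mathrm{p.v.}\!\int_{-1}^{1}\frac{U_m(\tau)\sqrt{1-\tau^2}}{\sigma-\tau}\,\d{\tau}=T_{m+1}(\sigma)\qquad(\sigma\in(-1,1)),
\]
with $T_{m+1}$ the Chebyshev polynomial of the first kind, shows that $\Hilbert_2\Radon\chi_\Omega(\theta,\cdot)$ coincides on the open interval $(s_-(\theta),s_+(\theta))$ with a polynomial in $s$ of degree $\leq n-1$; hence $\partial_2^n\Hilbert_2\Radon\chi_\Omega(\theta,s)=0$ there. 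It remains to check that the evaluation point lies in that interval: for $x,y\in\Omega$ with $x\neq y$ a direct computation gives $E(\tilde n(x,y),\tilde s(x,y))=\set{z\in\R^n\mid\norm{z-x}=\norm{z-y}}$, the perpendicular bisector hyperplane of the segment joining $x$ and $y$, which contains the midpoint $(x+y)/2$. By convexity $(x+y)/2\in\Omega$, and since $\Omega$ is open, $E(\tilde n(x,y),\tilde s(x,y))\cap\Omega$ is a nonempty relatively open subset of that hyperplane, so $\Radon\chi_\Omega(\tilde n(x,y),\tilde s(x,y))>0$, which forces $\tilde s(x,y)\in(s_-(\tilde n(x,y)),s_+(\tilde n(x,y)))$. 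Combined with the previous step, this yields the claimed identity, so $\K_\Omega f\equiv0$ and the corollary follows from Theorem~\ref{thm:explicitformeven}.

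I expect the main obstacle to be the middle step: showing that the finite Hilbert transform of $P_\theta\sqrt{Q_\theta}$ is a polynomial on the support interval. This requires some care — verifying that $\Radon\chi_\Omega(\theta,\cdot)$, being smooth on $(s_-(\theta),s_+(\theta))$ and vanishing like a square root at the endpoints, is H\"older continuous so that the principal value defining $\Hilbert_2$ exists classically; checking the covariance of $\Hilbert_2$ under affine rescaling of the $s$-variable; and, in the preceding step, the bookkeeping of the $s$-independent Jacobian $J(\theta)$ and of the affine dependence $\rho(\theta,s)=\alpha(\theta)s+\beta(\theta)$ under the linear reduction of the ellipsoid to the unit ball. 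The remaining ingredients — the perpendicular-bisector identity and the parity observation $\tfrac{n-2}{2}\in\N\cup\set{0}$ — are straightforward.
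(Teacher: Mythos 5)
Your proposal is correct, and at the top level it follows the same reduction as the paper: invoke Theorem~\ref{thm:explicitformeven} and show $\K_\Omega f\equiv 0$ by proving that $\partial_2^n\Hilbert_2\Radon\chi_\Omega$ vanishes at every hyperplane meeting the elliptical domain. The difference is in how that vanishing is established. The paper disposes of it in one line by citing \cite{Hal14}; you instead give a self-contained proof: affine reduction of the ellipsoid to the unit ball (with an $s$-independent Jacobian and an affine reparametrization of the offset), the parity observation that for even $n$ the profile $(1-\rho^2)_+^{(n-1)/2}$ is a degree-$(n-2)$ polynomial times the Chebyshev weight $\sqrt{1-\rho^2}$, and the finite Hilbert transform identity $\tfrac{1}{\pi}\,\mathrm{p.v.}\!\int_{-1}^{1}U_m(\tau)\sqrt{1-\tau^2}\,(\sigma-\tau)^{-1}\d{\tau}=T_{m+1}(\sigma)$, which shows that $\Hilbert_2\Radon\chi_\Omega(\theta,\cdot)$ agrees with a polynomial of degree at most $n-1$ on the chord interval, so its $n$-th derivative vanishes there. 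Your final step --- identifying $E(\tilde{n}(x,y),\tilde{s}(x,y))$ as the perpendicular bisector of $[x,y]$, which meets $\Omega$ by convexity, so the evaluation point lies strictly inside the chord interval --- is a detail the paper leaves implicit and is worth making explicit. What your route buys is independence from the external reference (essentially reproving the relevant computation of \cite{Hal14}); what the paper's route buys is brevity. I see no gap: the Chebyshev identity is the standard one, the $\mathrm{p.v.}$ exists classically since the integrand is H\"older up to square-root vanishing at the endpoints, and the Hilbert transform over $\R$ reduces to the finite Hilbert transform because $\Radon\chi_\Omega(\theta,\cdot)$ is supported on the chord interval.
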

	\begin{proof}
		Taking into account Theorem \ref{thm:explicitformeven}, it remains to show that $\K_\Omega f(x)=0$ for $x\in\Omega$. This is because of $\partial_2^n\Hilbert_2\Radon\chi_\Omega=0$ for the special case where $\Omega$ is an elliptical domain as shown in \cite{Hal14}.
	\end{proof}

	The proof of  Theorem \ref{thm:explicitformeven} requires some preparation. The reconstruction formula \eqref{eq:explictformeven} consists of two terms, where the first term contains the Neumann trace of $u \colon \R^n\times(0,\infty)\to\R$ on $\partial\Omega\times(0,\infty)$ and the second term is an integral operator depending on the initial data $f \in C_c^\infty(\Omega)$. Both terms are transformations of the two terms appearing on the right-hand side of the integral identity \eqref{eq:intidentity}. We will derive the inversion formula by manipulating both terms in \eqref{eq:intidentity} separately.

	\subsection{Manipulation of the boundary term}

	The first term on the right hand side in \eqref{eq:intidentity} can be computed as follows:
	\begin{prop}
		\label{lem:transfirsttermeven}
		Let $n\geq 2$ be an even natural number and $f,g\in C_c^\infty(\Omega)$. Then the identity
		\begin{multline*}
			\int_0^\infty v(x,t)\nd u(x,t) \d{t}\\
			=\frac{1}{\omega_n\gamma_n}(-1)^{\frac{n-2}{2}}\int_0^\infty \bigg(\left(\partial_t \frac{1}{t}\right)^{\frac{n-2}{2}}\nd u(x,t)\bigg)\int_{\B^n(x,t)}\frac{g(y)}{\sqrt{t^2-\norm{y-x}^2}}\d{y}\d{t}
		\end{multline*}
		holds for every $x\in\partial\Omega$.
	\end{prop}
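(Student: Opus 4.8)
The plan is to insert the representation formula \eqref{eq:solwaveeqeven} for $v$ and then, by repeated integration by parts in $t$, to transfer the differential operator $\left(\frac{1}{t}\partial_t\right)^{\frac{n-2}{2}}$ from the spherical average of $g$ onto the Neumann trace $\nd u$. Write $m\coloneqq\frac{n-2}{2}$, which is a nonnegative integer since $n$ is even, and put
\[
G(x,t)\coloneqq\int_{\B^n(x,t)}\frac{g(y)}{\sqrt{t^2-\norm{y-x}^2}}\d{y}\,.
\]
Because $v$ solves \eqref{eq:waveeq} with initial data $(0,g)$, formula \eqref{eq:solwaveeqeven} together with $\vol(\B^n(x,t))=\omega_n t^n$ gives $v(x,t)=\frac{1}{\omega_n\gamma_n}\left(\frac{1}{t}\partial_t\right)^{m}G(x,t)$ for $(x,t)\in\R^n\times(0,\infty)$. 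The substitution $y=x+tz$ shows $G(x,t)=t^{n-1}\int_{\norm{z}<1}(1-\norm{z}^2)^{-1/2}g(x+tz)\d{z}$, so $t\mapsto G(x,t)$ is smooth on $[0,\infty)$; moreover, since $x\in\partial\Omega$ while $\supp g$ is a compact subset of the open set $\Omega$, there is a $\delta>0$ such that $G(x,\cdot)$, and hence every $t$-derivative of it, vanishes identically on $[0,\delta]$.

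The key computational step is the elementary integration-by-parts identity
\[
\int_0^\infty\left(\frac{1}{t}\partial_t\phi\right)(t)\,\psi(t)\d{t}=\left[\frac{1}{t}\,\phi(t)\,\psi(t)\right]_0^\infty-\int_0^\infty\phi(t)\left(\partial_t\frac{1}{t}\,\psi\right)(t)\d{t}\,,
\]
obtained by writing $\left(\frac{1}{t}\partial_t\phi\right)\psi=\phi'\psi/t$. Applying it $m$ times in succession, at the $j$-th step with $\phi=\left(\frac{1}{t}\partial_t\right)^{m-j}G(x,\cdot)$ and $\psi=\left(\partial_t\frac{1}{t}\right)^{j-1}\nd u(x,\cdot)$, transforms
\[
\int_0^\infty v(x,t)\nd u(x,t)\d{t}=\frac{1}{\omega_n\gamma_n}\int_0^\infty\left(\left(\frac{1}{t}\partial_t\right)^{m}G(x,t)\right)\nd u(x,t)\d{t}
\]
into
\[
\frac{(-1)^{m}}{\omega_n\gamma_n}\int_0^\infty G(x,t)\left(\partial_t\frac{1}{t}\right)^{m}\nd u(x,t)\d{t}
\]
plus a finite sum of boundary terms of the type $\left[\frac{1}{t}\left(\frac{1}{t}\partial_t\right)^{m-j}G(x,t)\left(\partial_t\frac{1}{t}\right)^{j-1}\nd u(x,t)\right]_0^\infty$. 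Since $(-1)^m=(-1)^{(n-2)/2}$ and $G(x,t)$ is exactly the inner integral on the right-hand side of the assertion, this is precisely the claimed identity — provided all boundary terms vanish and all integrals converge.

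That verification is the only genuine obstacle. At $t=0$ every boundary term vanishes at once, because $G(x,\cdot)$ and its derivatives are identically zero on $[0,\delta]$. At $t=\infty$ I would rely on the polynomial decay of the solution formulas, in the spirit of the decay arguments already carried out in the proof of Proposition~\ref{prop:main-id} (cf. Lemma~\ref{lem:identity2}): expanding $(t^2-\norm{y-x}^2)^{-1/2}$ in powers of $1/t$ for large $t$ and fixed $x$ gives $G(x,t)=O(t^{-1})$ and $\nd u(x,t)=O(t^{-n})$, while each application of $\frac{1}{t}\partial_t$ or of $\partial_t\frac{1}{t}$ improves the decay by a factor $t^{-2}$; hence $\left(\frac{1}{t}\partial_t\right)^{m-j}G(x,t)=O(t^{-1-2(m-j)})$ and $\left(\partial_t\frac{1}{t}\right)^{j-1}\nd u(x,t)=O(t^{-n-2(j-1)})$, so each boundary term at infinity is $O(t^{-(2n-2)})\to 0$, and likewise $G(x,t)\left(\partial_t\frac{1}{t}\right)^{m}\nd u(x,t)=O(t^{-(2n-1)})$ is integrable near infinity (the integrand already vanishes near $t=0$). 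Apart from this decay bookkeeping, together with the routine justification of differentiating under the integral sign, which is legitimate because $f,g\in C_c^\infty(\Omega)$ and $x$ ranges over the compact set $\partial\Omega$, the proof is a direct computation; in particular, when $m=0$ there is nothing to integrate by parts and the identity is immediate.
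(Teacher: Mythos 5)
Your proof is correct and takes essentially the same route as the paper: insert the representation formula for $v$, integrate by parts $\tfrac{n-2}{2}$ times to move $\left(\tfrac{1}{t}\partial_t\right)$ off the averaged data and onto $\nd u$ as $\left(\partial_t\tfrac{1}{t}\right)$, and show the boundary terms vanish via decay at infinity and vanishing near $t=0$ (the paper organizes the same iteration as an induction and verifies the same two limits in a separate corollary using Lemma \ref{lem:identity2}). Your observation that $G(x,\cdot)$ vanishes identically on a neighborhood of $t=0$ because $x\in\partial\Omega$ lies at positive distance from $\supp{g}$ is a slightly cleaner disposal of the $t\searrow 0$ boundary terms than the paper's power-counting argument, but otherwise the two proofs coincide.
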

	For the proof of this proposition, we need the following lemma.
	\begin{lemma}
			\label{lem:identity2}
			Let $n\geq 2$ be a natural number and $g\in C_c^\infty(\Omega)$. Then, for every $k\in\N$ and $(x,t)\in\R^n\times(0,\infty)$ the identity
			\begin{align*}
				\left(\frac{1}{t}\partial_t\right)^k\int_{\B^n(x,t)}&\frac{g(y)}{\sqrt{t^2-\norm{x-y}^2}}\d{y}\\
				&=\sum_{l=0}^k c_{k,l}^{(n)}t^{n-(2k+1-l)}\int_{\B^n(0,1)}\frac{\sum_{i\in\set{1,\ldots,n}^k}\partial^ig(x+ty)y^i}{\sqrt{1-\norm{y}^2}}\d{y}\\
				&=\sum_{l=0}^k c_{k,l}^{(n)}t^{-(3k-l)}\int_{\B^n(0,t)}\frac{\sum_{i\in\set{1,\ldots,n}^k}\partial^ig(x+y)y^i}{\sqrt{t^2-\norm{y}^2}}\d{y},
			\end{align*}
			holds, where $\partial^i\coloneqq \partial_{i_1}\ldots\partial_{i_k}$, $y^i\coloneqq y_{i_1}\cdot\ldots\cdot y_{i_k}$ and the coefficients are recursively defined by $c_{0,0}^{(n)}\coloneqq 1$, $c_{1,0}^{(n)}\coloneqq n-1$, $c_{1,1}^{(n)}\coloneqq 1$, $c_{\tilde{k},0}^{(n)}\coloneqq c_{\tilde{k}-1,0}^{(n)}(n-(2(\tilde{k}-1)+1))$, $c_{\tilde{k},\tilde{k}}^{(n)}\coloneqq 1$ and $c_{\tilde{k},l}^{(n)}\coloneqq c_{\tilde{k}-1,l-1}^{(n)}+c_{\tilde{k}-1,l}^{(n)}(n-(2(\tilde{k}-1)-(l-1))$ for all $\tilde{k}\in\set{2,\ldots,k}$ and $l\in\{1,\ldots,\tilde{k}-1\}$.
		\end{lemma}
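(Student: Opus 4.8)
The plan is to rescale the ball $\B^n(x,t)$ onto the fixed unit ball — which replaces the left-hand side by $t^{\,n-1}$ times an integral against a $t$-independent singular weight — and then to establish the resulting one-variable identity by induction on $k$, reading off the coefficient recursion along the way.

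First I would substitute $y=x+tz$ in the integral on the left. Since $\d{y}=t^n\,\d{z}$ and $\sqrt{t^2-\norm{x-y}^2}=t\sqrt{1-\norm{z}^2}$, this gives
\[
  \int_{\B^n(x,t)}\frac{g(y)}{\sqrt{t^2-\norm{x-y}^2}}\d{y}=t^{\,n-1}\,\Phi_0(x,t),\qquad
  \Phi_0(x,t)\coloneqq\int_{\B^n(0,1)}\frac{g(x+tz)}{\sqrt{1-\norm{z}^2}}\d{z}.
\]
The weight $(1-\norm{z}^2)^{-1/2}$ is integrable over $\B^n(0,1)$ (in polar coordinates $\int_0^1 r^{\,n-1}(1-r^2)^{-1/2}\d{r}<\infty$) and does not depend on $t$; since $g\in C_c^\infty(\Omega)$ this makes differentiation under the integral sign in $t$ legitimate to all orders, locally uniformly in $(x,t)$. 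For integers $l\ge 0$ put
\[
  \Phi_l(x,t)\coloneqq\int_{\B^n(0,1)}\frac{\sum_{i\in\set{1,\dots,n}^{l}}\partial^ig(x+tz)\,z^i}{\sqrt{1-\norm{z}^2}}\d{z},
\]
so that the $l$-th summand on the right-hand side of the lemma is $c_{k,l}^{(n)}t^{\,n-(2k+1-l)}\Phi_l$ (with $i$ a multi-index of length $l$). Using $\partial_t\bigl[\partial^ig(x+tz)\bigr]=\sum_{j=1}^n\partial_j\partial^ig(x+tz)\,z_j$ and the fact that prepending an index $j\in\set{1,\dots,n}$ to a multi-index of length $l$ runs exactly once through all multi-indices of length $l+1$, one obtains the tower relation $\partial_t\Phi_l=\Phi_{l+1}$ for every $l\ge 0$.

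It now suffices to prove the one-variable identity $\bigl(\tfrac1t\partial_t\bigr)^{k}\bigl(t^{\,n-1}\Phi_0\bigr)=\sum_{l=0}^{k}c_{k,l}^{(n)}t^{\,n-(2k+1-l)}\Phi_l$ by induction on $k$. For $k=0$ it is just the definition of $\Phi_0$ together with $c_{0,0}^{(n)}=1$. For the step, apply $\tfrac1t\partial_t$ to each summand of the inductive hypothesis and use
\[
  \tfrac1t\partial_t\bigl(t^{\alpha}\Phi_l\bigr)=\alpha\,t^{\alpha-2}\Phi_l+t^{\alpha-1}\partial_t\Phi_l=\alpha\,t^{\alpha-2}\Phi_l+t^{\alpha-1}\Phi_{l+1}.
\]
Writing $\alpha_{k,l}\coloneqq n-(2k+1-l)$, the elementary identities $\alpha_{k,l}-2=\alpha_{k+1,l}$ and $\alpha_{k,l}-1=\alpha_{k+1,l+1}$ show that, after the index shift $l\mapsto l-1$ in the sum carrying the terms $\Phi_{l+1}$, every power of $t$ matches; comparing the coefficient of $t^{\alpha_{k+1,l}}\Phi_l$ on both sides gives $c_{k+1,0}^{(n)}=c_{k,0}^{(n)}\alpha_{k,0}$, $c_{k+1,k+1}^{(n)}=c_{k,k}^{(n)}$ and $c_{k+1,l}^{(n)}=c_{k,l}^{(n)}\alpha_{k,l}+c_{k,l-1}^{(n)}$ for $1\le l\le k$ — exactly the recursion stated in the lemma (the $k=0\to1$ step producing $c_{1,0}^{(n)}=n-1$ and $c_{1,1}^{(n)}=1$). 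The second displayed identity then follows from the first by undoing the substitution, i.e.\ replacing $z$ by $y/t$ in each $\Phi_l$: this turns $\int_{\B^n(0,1)}$ into $\int_{\B^n(0,t)}$ and $g(x+tz)$ into $g(x+y)$, and multiplies the $l$-th summand by the corresponding power of $t$.

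I do not anticipate a real obstacle here: the one genuinely analytic point is justifying differentiation under the integral in $t$, and that is precisely what the rescaling onto the fixed ball $\B^n(0,1)$ delivers; everything else is the termwise application of $\tfrac1t\partial_t$ together with the careful bookkeeping of the two recursions — for the exponents through $\alpha_{k,l}$ and for the coefficients $c_{k,l}^{(n)}$ — which becomes routine once the tower relation $\partial_t\Phi_l=\Phi_{l+1}$ is in hand.
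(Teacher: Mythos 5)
Your proof is correct and follows essentially the same route as the paper's: rescale onto the unit ball, induct on $k$ using the product rule to read off the coefficient recursion, and identify the $t$-derivatives of the unit-ball integral with the multi-index sums via differentiation under the integral sign. Note that you (correctly) take the multi-index in the $l$-th summand to have length $l$ rather than the $k$ printed in the statement; the paper's own proof yields the same reading, so this is a typo in the lemma rather than a discrepancy in your argument.
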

		\begin{proof}
			\begin{enumerate}[wide=\parindent,label=(\roman*)]
				\item We start the proof by showing
				\begin{equation}
					\label{eq:identity2}
					\begin{aligned}
					\left(\frac{1}{t}\partial_t\right)^k\int_{\B^n(x,t)} &\frac{g(y)}{\sqrt{t^2-\norm{x-y}^2}}\d{y}\\
					&=\sum_{l=0}^k c_{k,l}^{(n)}t^{n-(2k+1-l)}\partial_t^l\int_{\B^n(0,1)}\frac{g(x+ty)}{\sqrt{1-\norm{y}^2}}\d{y}.
					\end{aligned}
				\end{equation}
				The case $k=0$ follows from integration by substitution. Similarly, for $k=1$ we derive
				\begin{align*}
					\frac{1}{t}\partial_t&\int_{\B^n(x,t)}\frac{g(y)}{\sqrt{t^2-\norm{x-y}^2}}\d{y}\\
					&=\frac{1}{t}\partial_tt^{n-1}\int_{\B^n(0,1)}\frac{g(x+ty)}{\sqrt{1-\norm{y}^2}}\d{y}\\
					&=(n-1)t^{n-3}\int_{\B^n(0,1)}\frac{g(x+ty)}{\sqrt{1-\norm{y}^2}}\d{y}+t^{n-2}\partial_t\int_{\B^n(0,1)}\frac{g(x+ty)}{\sqrt{1-\norm{y}^2}}\d{y}.
				\end{align*}
				Now, suppose that \eqref{eq:identity2} holds for any value $k\geq 1$. Hence, our assumption and application of the product rule yield the relation
				\begin{align*}
					\left(\frac{1}{t}\partial_t\right)^{k+1}&\int_{\B^n(x,t)}\frac{g(y)}{\sqrt{t^2-\norm{x-y}^2}}\d{y}\\
					&=\frac{1}{t}\partial_t\left(\sum_{l=0}^k c_{k,l}^{(n)}t^{n-(2k+1-l)}\partial_t^l\int_{\B^n(0,1)}\frac{g(x+ty)}{\sqrt{1-\norm{y}^2}}\d{y}\right)\\
					&=\sum_{l=0}^k c_{k,l}^{(n)}(n-(2k+1-l))t^{n-(2(k+1)+1-l)}\partial_t^{l}\int_{\B^n(0,1)}\frac{g(x+ty)}{\sqrt{1-\norm{y}^2}}\d{y}\\
					&\quad+c_{k,l}^{(n)}t^{n-(2(k+1)+1-(l+1))}\partial_t^{l+1}\int_{\B^n(0,1)}\frac{g(x+ty)}{\sqrt{1-\norm{y}^2}}\d{y},
				\end{align*}
				where the last sum can be written as
				\begin{align*}
					&c_{k,0}^{(n)}(n-(2k+1))t^{n-(2(k+1)+1)}\int_{\B^n(0,1)}\frac{g(x+ty)}{\sqrt{1-\norm{y}^2}}\d{y}\\
					&\ +\sum_{l=1}^k\left(c_{k,l}^{(n)}(n-(2k+1-l))+c_{k,l-1}^{(n)}\right)t^{n-(2(k+1)+1-l)}\partial_t^{l}\int_{\B^n(0,1)}\frac{g(x+ty)}{\sqrt{1-\norm{y}^2}}\d{y}\\
					&\ +c_{k,k}^{(n)}t^{n-(2(k+1)+1-(k+1))}\partial_t^{k+1}\int_{\B^n(0,1)}\frac{g(x+ty)}{\sqrt{1-\norm{y}^2}}\d{y}.
				\end{align*}
				Comparing the above coefficients with the coefficients defined in the lemma shows \eqref{eq:identity2}.
				\item Differentiating under the integral sign and using the chain rule lead to
				\begin{equation*}
					\partial_t^k\int_{\B^n(0,1)}\frac{g(x+ty)}{\sqrt{1-\norm{y}^2}}\d{y}=\int_{\B^n(0,1)}\frac{\sum_{i\in\set{1,\ldots,n}^k}\partial^ig(x+ty)y^i}{\sqrt{1-\norm{y}^2}}\d{y}.
				\end{equation*}
				Then, substituting $y$ with $\frac{y}{t}$ yields the relation
				\begin{equation*}
					\partial_t^k\int_{\B^n(0,1)}\frac{g(x+ty)}{\sqrt{1-\norm{y}^2}}\d{y}=\frac{1}{t^{n+k-1}}\int_{\B^n(0,t)}\frac{\sum_{i\in\set{1,\ldots,n}^k}\partial^ig(x+y)y^i}{\sqrt{1-\norm{y}^2}}\d{y},
				\end{equation*}
				which shows together with \eqref{eq:identity2} the desired identity.\qedhere
			\end{enumerate}
		\end{proof}
		From Lemma \ref{lem:identity2} we immediately obtain the following corollary.
		\begin{corollary}
			Under the assumptions of Proposition \ref{lem:transfirsttermeven} we have for all  even $n\geq 4$ and $0\leq k\leq \frac{n-4}{2}$
			\begin{equation}
				\label{eq:limit}
				\lim_{t\to\infty}\frac{1}{t}\bigg(\left(\partial_t \frac{1}{t}\right)^{\frac{n-4}{2}-k}\nd u(x,t)\bigg)\left(\frac{1}{t}\partial_t\right)^k\int_{\B^n(x,t)}\frac{g(y)}{\sqrt{t^2-\norm{x-y}^2}}\d{y}=0
			\end{equation}
			and
			\begin{equation}
				\label{eq:limit2}
				\lim_{t\searrow 0}\frac{1}{t}\bigg(\left(\partial_t \frac{1}{t}\right)^{\frac{n-4}{2}-k}\nd u(x,t)\bigg)\left(\frac{1}{t}\partial_t\right)^k\int_{\B^n(x,t)}\frac{g(y)}{\sqrt{t^2-\norm{x-y}^2}}\d{y}=0.
			\end{equation}
		\end{corollary}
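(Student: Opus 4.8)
The plan is to handle the two limits separately, in each case reducing to elementary asymptotics of the two factors of the product, which for a fixed boundary point $x\in\partial\Omega$ I abbreviate as $A(x,t):=\bigl(\partial_t\tfrac1t\bigr)^{\frac{n-4}{2}-k}\nd u(x,t)$ and $B(x,t):=\bigl(\tfrac1t\partial_t\bigr)^k\int_{\B^n(x,t)}g(y)\bigl(t^2-\norm{x-y}^2\bigr)^{-1/2}\,dy$. First I would show that $A(x,\cdot)$ and $B(x,\cdot)$ both vanish identically for small $t>0$, which yields \eqref{eq:limit2} immediately; then I would show that as $t\to\infty$ both factors decay like a negative power of $t$, fast enough that the prefactor $1/t$ still forces the product to $0$, which yields \eqref{eq:limit}.

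For the behaviour near $t=0$, set $\delta:=\min\{\mathrm{dist}(x,\supp f),\mathrm{dist}(x,\supp g)\}$; this is positive because $\supp f$ and $\supp g$ are compact subsets of the open set $\Omega$ while $x\in\partial\Omega$. For $0<t<\delta$ the ball $\B^n(x,t)$ misses $\supp g$, so the inner integral defining $B$ — a smooth function of $t$ on $(0,\infty)$ — vanishes on $(0,\delta)$, hence so does $B(x,\cdot)$. For $A$, finite speed of propagation for \eqref{eq:waveeq} with data $(f,0)$ gives $u(x',t')=0$ whenever $\mathrm{dist}(x',\supp f)>t'$; so for each fixed $t\in(0,\delta)$ the map $x'\mapsto u(x',t)$ vanishes on a ball about $x$, whence $\nabla u(x,t)=0$ and $\nd u(x,t)=0$. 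Thus $\nd u(x,\cdot)\equiv 0$ on $(0,\delta)$, and since $\partial_t\tfrac1t$ preserves the property of vanishing on $(0,\delta)$, also $A(x,\cdot)\equiv 0$ there. Hence the whole integrand in \eqref{eq:limit2} vanishes on $(0,\delta)$, which proves \eqref{eq:limit2}.

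For the behaviour as $t\to\infty$, choose $\rho$ so large that $\norm{y-x'}\le\rho$ for all $y\in\overline\Omega$ and all $x'$ with $\norm{x'-x}\le1$, and take $t>\rho$. Then $\supp g\subset\B^n(x,t)$, and using $\bigl(\tfrac1t\partial_t\bigr)^k(t^2-c)^{-1/2}=(-1)^k(2k-1)!!\,(t^2-c)^{-(2k+1)/2}$ under the integral sign (equivalently, Lemma~\ref{lem:identity2}), one gets $B(x,t)=(-1)^k(2k-1)!!\int_\Omega g(y)\bigl(t^2-\norm{x-y}^2\bigr)^{-(2k+1)/2}\,dy=O\bigl(t^{-(2k+1)}\bigr)=O(t^{-1})$. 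Likewise $\supp f\subset\B^n(x',t)$ for $\norm{x'-x}\le1$, so \eqref{eq:solwaveeqeven} with $g=0$ and $t^n/\vol(\B^n(x,t))=\omega_n^{-1}$ reads $u(x',t)=\frac1{\gamma_n\omega_n}\partial_t\bigl(\tfrac1t\partial_t\bigr)^{\frac{n-2}{2}}\int_\Omega f(y)\bigl(t^2-\norm{y-x'}^2\bigr)^{-1/2}\,dy$; differentiating under the integral in $x'$ (legitimate since $t^2-\norm{y-x'}^2\ge t^2-\rho^2>0$) and putting $x'=x$ yields $\nd u(x,t)=\frac1{\gamma_n\omega_n}\partial_t\bigl(\tfrac1t\partial_t\bigr)^{\frac{n-2}{2}}\int_\Omega f(y)\scp{x-y,\nu(x)}\bigl(t^2-\norm{y-x}^2\bigr)^{-3/2}\,dy$. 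For $t>\rho$ the $j$-th $t$-derivative of the integrand is $O(t^{-3-j})$ uniformly in $y\in\overline\Omega$, so, since the operator $\partial_t\bigl(\tfrac1t\partial_t\bigr)^{(n-2)/2}$ consists of $n-1$ power-lowering factors in total (each $\partial_t$ and each $1/t$ counting once), a direct power count gives $\nd u(x,t)=O\bigl(t^{-(n+2)}\bigr)$, with the same decay for its $t$-derivatives; in particular $A(x,t)=O(t^{-n})$, the further operator $\bigl(\partial_t\tfrac1t\bigr)^{\frac{n-4}{2}-k}$ only improving the decay. Combining the two estimates, $\tfrac1t\,A(x,t)\,B(x,t)=O\bigl(t^{-(n+2)}\bigr)\to0$ as $t\to\infty$, which proves \eqref{eq:limit}.

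The step I expect to need the most care is the large-$t$ decay of the Neumann trace: one must justify differentiating the representation formula \eqref{eq:solwaveeqeven} with respect to the centre of the ball and then estimate $\partial_t\bigl(\tfrac1t\partial_t\bigr)^{(n-2)/2}$ of the resulting parameter integral uniformly over $\supp f$. Only a crude polynomial bound is actually needed, though — the bare observation that $\partial_t\bigl(\tfrac1t\partial_t\bigr)^{(n-2)/2}$ turns a $t^{-1}$-type decay into a $t^{-n}$-type decay already suffices, since the hypothesis $0\le k\le\frac{n-4}{2}$ keeps the exponent $\frac{n-4}{2}-k$ of the extra operator non-negative (this is also why the statement requires $n\ge4$). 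Everything else is routine finite-speed and compact-support bookkeeping.
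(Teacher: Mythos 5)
Your argument is correct, but it reaches the two limits by a partly different route than the paper, which derives the corollary as an immediate consequence of Lemma \ref{lem:identity2}. For \eqref{eq:limit2} the paper uses no support argument at all: it reads off from the first representation in Lemma \ref{lem:identity2} that $\left(\tfrac{1}{t}\partial_t\right)^k\int_{\B^n(x,t)}g(y)(t^2-\norm{x-y}^2)^{-1/2}\d{y}=O(t^{\,n-2k-1})$ as $t\searrow 0$, expands $\tfrac1t\left(\partial_t\tfrac1t\right)^{\frac{n-4}{2}-k}$ by the product rule, and checks that every resulting power of $t$ has exponent at least $2$, using only that $\nd u$ and its $t$-derivatives stay bounded near $t=0$. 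Your finite-speed/support argument --- both factors vanish identically on $(0,\delta)$ because $x\in\partial\Omega$ has positive distance to $\supp{f}\cup\supp{g}$ --- is shorter and avoids that bookkeeping entirely, at the (here harmless, since $\nd u$ only makes sense on the boundary) price of genuinely using that $x$ is away from the supports; the paper's version would survive even for $x$ inside them. For \eqref{eq:limit} the two proofs are the same in spirit, namely power counting once $\B^n(x,t)$ has swallowed the supports, but you are more complete: the paper estimates only the second factor, via the second representation in Lemma \ref{lem:identity2}, as $O(t^{-2k})$ and then asserts the limit, leaving the decay of $\tfrac1t\left(\partial_t \tfrac1t\right)^{\frac{n-4}{2}-k}\nd u(x,t)$ implicit, whereas you make it explicit by differentiating the representation formula \eqref{eq:solwaveeqeven} in the spatial variable and counting powers to get $\nd u(x,t)=O(t^{-(n+2)})$ together with matching decay of its $t$-derivatives. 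That extra step is precisely what is needed to make the paper's ``this implies \eqref{eq:limit}'' airtight, and your justification of the interchange of $\nabla_{x'}$ and $\partial_t\left(\tfrac1t\partial_t\right)^{\frac{n-2}{2}}$ with the integral (smoothness and uniform bounds over the compact set $\supp{f}$ once $t>\rho$) is adequate.
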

		\begin{proof}
			\begin{enumerate}[wide=\parindent,label=(\roman*)]
				\item Since $g$ has compact support in $\Omega$, we can find $R>0$ such that $\supp{g(x+\cdot})\subset\B^n(0,R)$. Then, from the previous lemma we have for $t\geq R$
			\begin{align*}
				\Bigg\vert\left(\frac{1}{t}\partial_t\right)^k&\int_{\B^n(x,t)}\frac{g(y)}{\sqrt{t^2-\norm{x-y}^2}}\d{y}\Bigg\vert\\
				&=\Bigg\vert\sum_{l=0}^k c_{k,l}^{(n)}t^{-(3k-l)}\int_{\B^n(0,R)}\frac{\sum_{i\in\set{1,\ldots,n}^k}\partial^ig(x+y)y^i}{\sqrt{t^2-\norm{y}^2}}\d{y}\Bigg\vert\\
				&\leq C\sum_{l=0}^k c_{k,l}^{(n)}t^{-(3k-l)}\int_{\B^n(0,R)}\frac{1}{\sqrt{R^2-\norm{y}^2}}\d{y}\\
				&=C\sum_{l=0}^kc_{k,l}^{(n)}t^{-(3k-l)}n\omega_n\int_0^R\frac{r^{n-1}}{\sqrt{R^2-r^2}}\d{r}\\
				&=C\sum_{l=0}^kc_{k,l}^{(n)}t^{-(3k-l)}n\omega_n\frac{\sqrt{\pi} R^{n-1}\Gamma(\frac{n}{2})}{2\Gamma(\frac{n+1}{2})}
			\end{align*}
			where we used polar coordinates in the last integral and set $$C\coloneqq R^kn^k\max_{i\in\set{1,\ldots,n}^k}\max_{y\in\R^n}\abs{\partial^ig(y)}.$$ This implies Equality \eqref{eq:limit}.
				\item From Lemma \ref{lem:identity2} we conclude that
				\begin{multline*}
					\abs{\frac{1}{t}\bigg(\left(\partial_t \frac{1}{t}\right)^{\frac{n-4}{2}-k}\nd u(x,t)\bigg)\left(\frac{1}{t}\partial_t\right)^k\int_{\B^n(x,t)}\frac{g(y)}{\sqrt{t^2-\norm{x-y}^2}}\d{y}}\\
					\leq D\abs{\frac{1}{t}\bigg(\left(\partial_t \frac{1}{t}\right)^{\frac{n-4}{2}-k}\nd u(x,t)\bigg)}\left(\sum_{l=0}^k c_{k,l}^{(n)}t^{n-(2k+1-l)}\int_{\B^n(0,1)}\frac{1}{\sqrt{1-\norm{y}^2}}\d{y}\right)
				\end{multline*}
				where $$D\coloneqq n^k\max_{i\in\set{1,\ldots,n}^k}\max_{y\in\R^n}\abs{\partial^ig(y)}.$$ After applying the product rule on the first factor, we see that the above term consists of variables $t$ with exponents greater than or equal to $2$. This observation shows \eqref{eq:limit2}.
			\end{enumerate}
		\end{proof}
		\begin{proof}[Proof of Proposition \ref{lem:transfirsttermeven}]
			The case $n=2$ is clear. For $n\geq 4$ we prove the statement by showing
			\begin{equation*}
				%\label{eq:identity3}
				\begin{aligned}
				&\int_0^\infty v(x,t)\nd u(x,t) \d{t}\\
				&=\frac{1}{\omega_n\gamma_n}(-1)^k\int_0^\infty \bigg(\left(\partial_t \frac{1}{t}\right)^{k}\nd u(x,t)\bigg)\left(\frac{1}{t}\partial
				_t\right)^{\frac{n-2}{2}-k}\int_{\B^n(x,t)}\frac{g(y)}{\sqrt{t^2-\norm{y-x}^2}}\d{y}\d{t}
				\end{aligned}
			\end{equation*}
			for all $1\leq k\leq \frac{n-2}{2}$. For $k=1$, inserting representation formula \eqref{eq:solwaveeqeven} for $v$ and applying integration by parts yield the relation
			\begin{align*}
				\int_0^\infty &v(x,t)\nd u(x,t) \d{t}\\
				&=\frac{1}{\omega_n\gamma_n}\Bigg(\lim_{a\searrow 0}\int_a^c \frac{1}{t}\nd u(x,t)\partial_t\left(\frac{1}{t}\partial
				_t\right)^{\frac{n-4}{2}}\int_{\B^n(x,t)}\frac{g(y)}{\sqrt{t^2-\norm{y-x}^2}}\d{y}\d{t}\\
				&\quad +\lim_{b\to\infty}\int_c^b \frac{1}{t}\nd u(x,t)\partial_t\left(\frac{1}{t}\partial
				_t\right)^{\frac{n-4}{2}}\int_{\B^n(x,t)}\frac{g(y)}{\sqrt{t^2-\norm{y-x}^2}}\d{y}\d{t}\Bigg)\\
				&=-\frac{1}{\omega_n\gamma_n}\int_0^\infty \bigg(\partial_t \frac{1}{t}\nd u(x,t)\bigg)\left(\frac{1}{t}\partial
				_t\right)^{\frac{n-4}{2}}\int_{\B^n(x,t)}\frac{g(y)}{\sqrt{t^2-\norm{y-x}^2}}\d{y}\d{t}
			\end{align*}
			for some $c\in(0,\infty)$, where the boundary term is zero because of \eqref{eq:limit} and \eqref{eq:limit2}. Now, suppose that the above identity holds for any value $1\leq k<\frac{n-2}{2}$. Then, by using the same arguments as before we obtain
			\begin{align*}
				&\int_0^\infty v(x,t)\nd u(x,t) \d{t}\\
				&=\frac{(-1)^k}{\omega_n\gamma_n}\int_0^\infty \bigg(\left(\partial_t \frac{1}{t}\right)^{k}\nd u(x,t)\bigg)\left(\frac{1}{t}\partial
				_t\right)^{\frac{n-2}{2}-k}\int_{\B^n(x,t)}\frac{g(y)}{\sqrt{t^2-\norm{y-x}^2}}\d{y}\d{t}\\
				&=\frac{(-1)^{k+1}}{\omega_n\gamma_n}\int_0^\infty \bigg(\left(\partial_t \frac{1}{t}\right)^{k+1}\nd u(x,t)\bigg)\\
				&\hspace{2.5cm}\cdot \left(\frac{1}{t}\partial
				_t\right)^{\frac{n-2}{2}-(k+1)}\int_{\B^n(x,t)}\frac{g(y)}{\sqrt{t^2-\norm{y-x}^2}}\d{y}\d{t},
			\end{align*}
			which finally proves our desired transformation.
		\end{proof}
	
	\subsection{Manipulation of the interior term}
	
	%In this section we derive a relation that allows to reformulate the second term on the right hand side in \eqref{eq:intidentity} such that it can be seen to vanish for special domains $\Omega$.
	In this section we derive a relation that allows to reformulate \eqref{eq:intidentity} such that the interior term, which is the integral over the domain $\Omega$ on the right hand side, can be seen to vanish for special domains $\Omega$. This transformation requires a lot of calculations. We therefore separate the proof in several parts in order to structure the proof better for the reader.
	
	The first lemma reads as follows:
	\begin{lemma}
		\label{lem:identity1term2even}
		Let $n\geq 2$ be an even natural number and $f,g\in C_c^\infty(\Omega)$. Then, for every $x\in\Omega$ we have
		\begin{equation}
			\label{eq:identity1term2even}
			\begin{aligned}
			\int_0^\infty & u(x,t)v(x,t)\d{t}\\
			&=-\frac{n}{\gamma_n}\int_0^\infty f(x)r_2\D_{r_2}\left(r_2^{n-2}\M g(x,r_2)\right)\log r_2 \d{r_2}\\
			&\quad -\frac{n^2}{2 \gamma_n^2}\int_0^\infty\int_0^\infty \partial_{r_1}\D_{r_1}^{\frac{n-2}{2}}\left(r_1^{n-2}\M f(x,r_1)\right)r_2
			\\
			& \hspace{0.225\textwidth}
			\cdot \D_2^{\frac{n-2}{2}}\left(r_2^{n-2}\M g(x,r_2)\right)
			\log\abs{r_2^2-r_1^2}\d{r_2}\d{r_1} \,,
			\end{aligned}
		\end{equation}
		where $\D_r\coloneqq \frac{1}{r}\partial_r$.
	\end{lemma}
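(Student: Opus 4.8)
Since $u$ solves the wave equation with data $(f,0)$ and $v$ with data $(0,g)$, the representation formula \eqref{eq:solwaveeqeven3} yields $u(x,t)=\frac{n}{\gamma_n}\partial_tP_f(x,t)$ and $v(x,t)=\frac{n}{\gamma_n}P_g(x,t)$ with
\[ P_h(x,t)\coloneqq\int_0^t\frac{r}{\sqrt{t^2-r^2}}\,\D_r^{\frac{n-2}{2}}\!\bigl(r^{n-2}\M h(x,r)\bigr)\,dr ,\qquad h\in\{f,g\}. \]
Hence the statement reduces to evaluating the scalar $\frac{n^2}{\gamma_n^2}\int_0^\infty(\partial_tP_f)(x,t)\,P_g(x,t)\,dt$ for a fixed $x\in\Omega$. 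I would pass to the squared variables: setting $\tau=t^2$ and substituting $s=r^2$ inside the $r$-integral identifies $P_h(x,t)$ with the half-order Abel integral $2^{\frac{n-4}{2}}(\mathcal A\Phi_h(x,\cdot))(\tau)$, where $(\mathcal A\phi)(\tau)\coloneqq\int_0^\tau\frac{\phi(s)}{\sqrt{\tau-s}}\,ds$ and $\Phi_h(x,s)\coloneqq\partial_s^{\frac{n-2}{2}}\!\bigl(s^{\frac{n-2}{2}}\M h(x,\sqrt s)\bigr)$; note that $\Phi_h(x,\cdot)$ is smooth on $[0,\infty)$ and compactly supported, since $h\in C_c^\infty(\Omega)$ forces $\M h(x,\cdot)$ to have compact support. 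Together with $\partial_tP_f=2^{\frac{n-2}{2}}\sqrt\tau\,\partial_\tau(\mathcal A\Phi_f(x,\cdot))$ and $dt=d\tau/(2\sqrt\tau)$, this turns the quantity to be computed into $\frac{n^2\,2^{\,n-4}}{\gamma_n^2}\int_0^\infty\bigl(\partial_\tau\mathcal A\Phi_f(x,\cdot)\bigr)(\tau)\,\bigl(\mathcal A\Phi_g(x,\cdot)\bigr)(\tau)\,d\tau$.

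The decisive move is to differentiate the Abel integral by $\partial_\tau(\mathcal A\phi)(\tau)=\frac{\phi(0)}{\sqrt\tau}+(\mathcal A\phi')(\tau)$. The endpoint term carries the constant $\Phi_f(x,0)=\bigl(\tfrac{n-2}{2}\bigr)!\,f(x)$ (a one-line Leibniz evaluation at $s=0$), which is the germ of the first summand of \eqref{eq:identity1term2even}, and the other term carries $\partial_s\Phi_f$. Inserting this and interchanging the $\tau$-integration with the two $s$-integrations brings in the inner integrals $\int_{s_2}^\infty\frac{d\tau}{\sqrt{\tau(\tau-s_2)}}$ and $\int_{\max(s_1,s_2)}^\infty\frac{d\tau}{\sqrt{(\tau-s_1)(\tau-s_2)}}$ — and here is the one real obstacle: \emph{each of these $\tau$-integrals diverges logarithmically}, even though $\int_0^\infty uv\,dt$ is absolutely convergent (the solution formula \eqref{eq:solwaveeqeven} shows $u$ and $v$ decay polynomially in $t$). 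I would deal with it by cutting the $\tau$-integration off at a finite level $N$, where Fubini is unconditionally valid, using the elementary evaluations
\[ \int_{s_2}^N\frac{d\tau}{\sqrt{\tau(\tau-s_2)}}=\ln N+2\ln 2-\ln s_2+o(1),\qquad \int_{\max(s_1,s_2)}^N\frac{d\tau}{\sqrt{(\tau-s_1)(\tau-s_2)}}=\ln N+2\ln 2-\ln\abs{s_1-s_2}+o(1) \]
as $N\to\infty$, and observing that the divergent $\ln N+2\ln 2$ parts of the two resulting terms cancel because $\int_0^\infty\partial_s\Phi_f(x,s)\,ds=-\Phi_f(x,0)$ by compact support. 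Letting $N\to\infty$ then leaves
\[ \int_0^\infty uv\,dt=-\frac{n^2\,2^{\,n-4}}{\gamma_n^2}\Bigl[\Phi_f(x,0)\int_0^\infty\Phi_g(x,s_2)\ln s_2\,ds_2+\int_0^\infty\!\!\int_0^\infty\partial_s\Phi_f(x,s_1)\,\Phi_g(x,s_2)\ln\abs{s_1-s_2}\,ds_1\,ds_2\Bigr]. \]

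It then remains to undo the substitution $s_i=r_i^2$. From $\D_r^{\frac{n-2}{2}}\!\bigl(r^{n-2}\M h(x,r)\bigr)=2^{\frac{n-2}{2}}\Phi_h(x,r^2)$ one obtains $\Phi_g(x,s_2)\,ds_2=2^{1-\frac{n-2}{2}}r_2\,\D_{r_2}^{\frac{n-2}{2}}\!\bigl(r_2^{n-2}\M g(x,r_2)\bigr)\,dr_2$ and $\partial_s\Phi_f(x,s_1)\,ds_1=2^{-\frac{n-2}{2}}\partial_{r_1}\D_{r_1}^{\frac{n-2}{2}}\!\bigl(r_1^{n-2}\M f(x,r_1)\bigr)\,dr_1$, together with $\ln s_2=2\ln r_2$, $\ln\abs{s_1-s_2}=\ln\abs{r_2^2-r_1^2}$, and $\bigl(\tfrac{n-2}{2}\bigr)!=\gamma_n/\bigl(n\,2^{\frac{n-2}{2}}\bigr)$. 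Substituting these in and collecting the powers of $2$, the prefactor of the single integral collapses to $-n/\gamma_n$ and that of the double integral to $-n^2/(2\gamma_n^2)$, which is exactly \eqref{eq:identity1term2even}. Beyond the divergence cancellation, the remaining work is this constant bookkeeping and the routine justifications of convergence and of the interchanges of integration, all of which rest on the smoothness and compact support of $\M f(x,\cdot)$ and $\M g(x,\cdot)$ (and the resulting behaviour $\D_r^k\!\bigl(r^{n-2}\M h(x,r)\bigr)=O(r^{n-2-2k})$ as $r\to0$).
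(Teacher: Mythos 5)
Your proof is correct, and it takes a genuinely different route from the paper's. The paper stays in the original variables $(t,r_1,r_2)$: it truncates the time integral at a fixed $T$ exceeding the diameter of $\Omega$, integrates by parts in $r_1$ to split off the term $t\tfrac{\gamma_n}{n}f(x)$, evaluates the inner $t$-integral on $[\max(r_1,r_2),T]$ via the antiderivative $\log\bigl(\sqrt{t^2-r_2^2}+\sqrt{t^2-r_1^2}\bigr)$ of $t/\bigl(\sqrt{t^2-r_1^2}\sqrt{t^2-r_2^2}\bigr)$, and then disposes of the $T$-dependent boundary term $\log\bigl(T+\sqrt{T^2-r_2^2}\bigr)$ by a further integration by parts that identifies it with $-\int_0^\infty f(x)v(x,t)\,dt$ plus the $\log r_2$ integral; this cancels against the $\int_0^T f(x)v(x,t)\,dt$ produced by the Leibniz rule, and $T\to\infty$ finishes. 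You instead pass to squared variables, recognize the Abel half-integral structure, split $\partial_\tau(\mathcal A\phi)$ into the endpoint term $\phi(0)/\sqrt{\tau}$ plus $\mathcal A\phi'$, and regularize by truncating the $\tau$-integration at $N$; your $\ln N$ divergences cancel because $\int_0^\infty \partial_s\Phi_f(x,s)\,ds=-\Phi_f(x,0)$, which plays exactly the role of the paper's cancellation of $\int_0^T f(x)v(x,t)\,dt$. I checked your bookkeeping ($P_h=2^{\frac{n-4}{2}}\mathcal A\Phi_h$, $\Phi_f(x,0)=(\tfrac{n-2}{2})!\,f(x)=\tfrac{\gamma_n}{n2^{(n-2)/2}}f(x)$, the asymptotics of the two $\tau$-integrals, and the collapse of the prefactors to $-n/\gamma_n$ and $-n^2/(2\gamma_n^2)$), and it is all consistent. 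One remark: your first term carries $\D_{r_2}^{\frac{n-2}{2}}$, whereas the statement as printed shows a single $\D_{r_2}$; your version is the correct one, since it agrees with the paper's own proof and with the form subsequently used in Lemma \ref{lem:identity2term2even}, so this is a typo in the printed statement rather than an error in your derivation. What each route buys: yours makes transparent where the logarithmic kernel comes from (the regularized composition of two Abel kernels) and isolates the cancellation in a clean total-derivative identity, while the paper's avoids the change of variables and delivers the identity directly in the $(r_1,r_2)$ form that the next lemma consumes.
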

	\begin{proof}
		\begin{enumerate}[wide=\parindent,label=(\roman*)]
			\item \label{item:identity1term2even1}Let $T$ be a fixed positive number greater than the diameter of $\Omega$. Using representation formula \eqref{eq:solwaveeqeven3} and applying integration by parts lead to
			\begin{align*}
				\int_0^T &u(x,t)v(x,t)\d{t}\\
				&=\frac{n^2}{\gamma_n^2}\int_0^T\partial_t\left(\int_0^t\frac{r_1}{\sqrt{t^2-r_1^2}}\D_{r_1}^{\frac{n-2}{2}}\left(r_1^{n-2}\M f(x,r_1)\right)\d{r_1}\right)\\
				&\hspace{1.5cm}\cdot\int_0^t\frac{r_2}{\sqrt{t^2-r_2^2}}\D_{r_2}^{\frac{n-2}{2}}\left({r_2}^{n-2}\M g(x,r_2)\right)\d{r_2}\d{t}\\
				&=\frac{n^2}{\gamma_n^2}\int_0^T \partial_t\left(t\frac{\gamma_n}{n}f(x)+\int_0^t\sqrt{t^2-r_1^2}\partial_{r_1}\D_{r_1}^{\frac{n-2}{2}}\left(r_1^{n-2}\M f(x,r_1)\right)\d{r_1}\right)\\
				&\hspace{1.5cm}\cdot\int_0^t\frac{r_2}{\sqrt{t^2-r_2^2}}\D_{r_2}^{\frac{n-2}{2}}\left({r_2}^{n-2}\M g(x,r_2)\right)\d{r_2}\d{t}
			\end{align*}
			which, according to  Leibniz's integral rule, can be further be written as
			\begin{multline*}
				\int_0^Tf(x)v(x,t)\d{t}
				+\frac{n^2}{\gamma_n^2}\int_0^T\int_0^t\int_0^t\frac{tr_2}{\sqrt{t^2-r_1^2}\sqrt{t^2-r_2^2}} \\
				\cdot \partial_{r_1}\D_{r_1}^{\frac{n-2}{2}}\left(r_1^{n-2}\M f(x,r_1)\right)\D_{r_2}^{\frac{n-2}{2}}\left({r_2}^{n-2}\M g(x,r_2)\right)\d{r_2}\d{r_1}\d{t} \,.
			\end{multline*}
			 From $$\partial_t \log\left(\sqrt{t^2-r_2^2}+\sqrt{t^2-r_1^2}\right)= \frac{t}{\sqrt{t^2-r_1^2}\sqrt{t^2-r_2^2}}$$ we see that the right triple integral is equal to
				\begin{equation}
					\label{eq:identity1term2eventrans}
					\begin{aligned}
					\frac{n^2}{\gamma_n^2}\int_0^T&\partial_{r_1}\D_{r_1}^{\frac{n-2}{2}}\left(r_1^{n-2}\M f(x,r_1)\right)\int_0^Tr_2\D_{r_2}^{\frac{n-2}{2}}\left({r_2}^{n-2}\M g(x,r_2)\right)\\
					&\hspace{3.5cm}\cdot\int_{\max(\lbrace r_1,r_2\rbrace)}\frac{t}{\sqrt{t^2-r_1^2}\sqrt{t^2-r_2^2}}\d{t}\d{r_2}\d{r_1}\\
					&=\frac{n^2}{\gamma_n^2}\int_0^T\int_0^T\partial_{r_1}\D_{r_1}^{\frac{n-2}{2}}\left(r_1^{n-2}\M f(x,r_1)\right)r_2\D_{r_2}^{\frac{n-2}{2}}\left({r_2}^{n-2}\M g(x,r_2)\right)\\
					&\hspace{2.25cm}\cdot\log\left(\sqrt{T^2-r_2^2}+\sqrt{T^2-r_1^2}\right)\d{r_2}\d{r_1}\\
					&\quad -\frac{n^2}{2\gamma_n^2}\int_0^T\int_0^T\partial_{r_1}\D_{r_1}^{\frac{n-2}{2}}\left(r_1^{n-2}\M f(x,r_1)\right)r_2\D_{r_2}^{\frac{n-2}{2}}\left({r_2}^{n-2}\M g(x,r_2)\right)
					\\
					&\hspace{2.75cm}\cdot \log\abs{r_2^2-r_1^2}\d{r_2}\d{r_1}
					\end{aligned}
				\end{equation}
			after changing the order of the integrals.
			
			\item \label{item:identity1term2even2} We further observe that the first double integral on the right hand side in \eqref{eq:identity1term2eventrans} can be written as
			\begin{align*}
				-\frac{n}{\gamma_n}\int_0^T &f(x)r_2\D_{r_2}^{\frac{n-2}{2}}\left({r_2}^{n-2}\M g(x,r_2)\right)\log\left(T+\sqrt{T^2-r_2^2}\right)\d{r_2}\\
				&+\frac{n^2}{\gamma_n^2}\int_0^T\int_0^T\D_{r_1}^{\frac{n-2}{2}}\left(r_1^{n-2}\M f(x,r_1)\right)r_2\D_{r_2}^{\frac{n-2}{2}}\left({r_2}^{n-2}\M g(x,r_2)\right)\\
				&\hspace*{4cm}\cdot\frac{r_1}{\left(\sqrt{T^2-r_1^2}+\sqrt{T^2-r_2^2}\right)\sqrt{T^2-r_1^2}}\d{r_1}\d{r_2},
			\end{align*}
			by a further application of integration by parts, whereby the first term can be expressed as
			 \begin{multline*}
			 	-\frac{n}{\gamma_n}\int_0^\infty f(x)\int_{r_2}^T\frac{r_2\D_{r_2}^{\frac{n-2}{2}}\left({r_2}^{n-2}\M g(x,r_2)\right)}{\sqrt{t^2-r_2^2}}\d{t}\d{r_2}\\
			 	-\frac{n}{\gamma_n}\int_0^Tf(x)r_2\D_{r_2}^{\frac{n-2}{2}}\left({r_2}^{n-2}\M g(x,r_2)\right)\log(r_2)\d{r_2}.
			 \end{multline*}
			 From Fubini's theorem and solution formula \eqref{eq:solwaveeqeven3} we can conclude that this term corresponds to
			 \begin{align*}
			 	-\int_0^\infty f(x)v(x,t)\d{t}-\frac{n}{\gamma_n}\int_0^Tf(x)r_2\D_{r_2}^{\frac{n-2}{2}}\left({r_2}^{n-2}\M g(x,r_2)\right)\log(r_2)\d{r_2}.
			 \end{align*}
		\end{enumerate}
		Finally, Items \ref{item:identity1term2even1}, \ref{item:identity1term2even2} and letting $T\to\infty$ lead to identity \eqref{eq:identity1term2even}.
	\end{proof}
	Based on Lemma \ref{lem:identity1term2even}, we can now prove the following statement:
	\begin{lemma}
		\label{lem:identity2term2even}
		Under the assumptions of Lemma \ref{lem:identity1term2even} we have that
		\begin{equation*}
			%\label{eq:identity1term2even}
			\begin{aligned}
			\int_\Omega\int_0^\infty & u(x,t)v(x,t)\d{t}\d{x}\\
			&=\frac{(-1)^{\frac{n}{2}}2^{n-2}n^2}{\gamma_n^2}\lim_{m,k\to \infty}\int_{\R^n}\int_0^\infty\int_0^\infty \varphi_{m}(x)r_1^{n-1}r_2^{n-1}\M f(x,r_1)\M g(x,r_2)\\
			& \hspace{5.75cm}\cdot \Phi_{k}^{(n-1)}(r_2^2-r_1^2)\d{r_2}\d{r_1}\d{x}
			\end{aligned}
		\end{equation*}
		where $\set{\varphi_m}_{m\in\N}$ is a family of bounded and integrable functions converging pointwise to $\chi_{\Omega}$ and $\set{\Phi_k}_{k\in\N}$ a family of smooth functions converging pointwise to $\log|\cdot|$ almost everywhere. Here, we set for short $\lim_{m,k\to\infty}$ as $\lim_{m\to\infty}\lim_{k\to\infty}$.
	\end{lemma}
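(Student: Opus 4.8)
The plan is to start from Lemma~\ref{lem:identity1term2even}, which expresses $\int_0^\infty u(x,t)v(x,t)\,dt$ as the sum of a single integral involving $\log r_2$ and a double integral involving $\log\abs{r_2^2-r_1^2}$, and then to transfer all the radial derivatives $\partial_{r_1}\D_{r_1}^{(n-2)/2}$ and $\D_{r_2}^{(n-2)/2}$ off the spherical means and onto the logarithmic kernel by repeated integration by parts. Since the kernel has to be differentiated $n-1$ times altogether and $(\log\abs{\,\cdot\,})^{(n-1)}$ is not locally integrable, every manipulation will be carried out with $\log\abs{\,\cdot\,}$ first replaced by a smooth approximant $\Phi_k$, chosen with $\abs{\Phi_k(w)}\le\abs{\log\abs{w}}+C$ uniformly in $k$ so that dominated convergence is available at the end, and with $\chi_\Omega$ replaced by $\varphi_m$; the pointwise identity of Lemma~\ref{lem:identity1term2even} is used at every $x$ in the support of $\varphi_m$.

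The computational core is the substitution $s=r_1^2$, $\tau=r_2^2$. Since $\M f(x,r)=\sigma(\Sp^{n-1})^{-1}\int_{\Sp^{n-1}}f(x+r\omega)\,d\omega$ is even and smooth in $r$, the map $s\mapsto\M f(x,\sqrt{s})$ is smooth on $[0,\infty)$, and because $\D_r=\tfrac1r\partial_r=2\partial_s$ one has $\partial_{r_1}\D_{r_1}^{(n-2)/2}=2^{n/2}\sqrt{s}\,\partial_s^{n/2}$ and $\D_{r_2}^{(n-2)/2}=2^{(n-2)/2}\partial_\tau^{(n-2)/2}$, while $r_1^{n-2}\M f(x,r_1)=s^{(n-2)/2}\M f(x,\sqrt{s})$ and likewise for $g$. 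In these variables I would integrate by parts $n/2$ times in $s$ and then $(n-2)/2$ times in $\tau$, pushing all derivatives onto $\Phi_k(\tau-s)$; the chain rule $\partial_s=-\partial_\tau$ on functions of $\tau-s$ collapses the $n-1$ derivatives into $(-1)^{n/2}\Phi_k^{(n-1)}(\tau-s)$. The boundary terms at $s=\infty$ and $\tau=\infty$ vanish by compact support of the spherical means; at $\tau=0$ all boundary terms vanish because only derivatives of order at most $(n-2)/2-1$ of $\tau^{(n-2)/2}\M g(x,\sqrt{\tau})$ appear; and of the $n/2$ boundary terms at $s=0$ only the lowest-order one survives and contributes a term proportional to $f(x)\int_0^\infty\Phi_k(\tau)\,\partial_\tau^{(n-2)/2}\bigl(\tau^{(n-2)/2}\M g(x,\sqrt{\tau})\bigr)\,d\tau$.

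The decisive bookkeeping step is to verify that this surviving boundary term cancels exactly the single $\log r_2$-integral of Lemma~\ref{lem:identity1term2even} (after the same substitution and the replacement $\log r_2=\tfrac12\Phi_k(r_2^2)$), which rests on the identity $\gamma_n=2^{(n-2)/2}\bigl(\tfrac{n-2}{2}\bigr)!\,n$ and is where the exact powers of $2$ and the factorials must be tracked. Once the cancellation is performed and $s=r_1^2$, $\tau=r_2^2$ is undone, one obtains, for every $x$ and every $k$, that the quantity of Lemma~\ref{lem:identity1term2even} with $\log$ replaced by $\Phi_k$ throughout equals
\[
\frac{(-1)^{\frac{n}{2}}\,2^{n-2}\,n^2}{\gamma_n^2}\int_0^\infty\!\!\int_0^\infty r_1^{n-1}r_2^{n-1}\,\M f(x,r_1)\,\M g(x,r_2)\,\Phi_k^{(n-1)}(r_2^2-r_1^2)\,dr_1\,dr_2 .
\]

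Finally I would pass to the limit. For fixed $k$, dominated convergence — using $\abs{\Phi_k(w)}\le\abs{\log\abs{w}}+C$ together with the boundedness and compact $r$-support of $\M f$ and $\M g$ — shows that the $\Phi_k$-version of $\int_0^\infty uv\,dt$ converges, as $k\to\infty$, to $\int_0^\infty uv\,dt$; hence $\int_0^\infty uv\,dt=\lim_k F_k(x)$ with $F_k(x)$ the double integral displayed above. The same uniform bound gives $\abs{F_k(x)}\le C'$ independent of $k$, so writing $\int_\Omega\int_0^\infty uv\,dt\,dx=\lim_m\int_{\R^n}\varphi_m(x)\int_0^\infty uv\,dt\,dx$ and applying dominated convergence once more (inside in $k$, outside in $m$) yields the asserted identity with $\lim_{m\to\infty}\lim_{k\to\infty}$. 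The only real difficulty in the argument is this accounting of the two nested families of boundary terms — recognising that exactly one of them is nonzero and that it is cancelled by the $\log r_2$-term of Lemma~\ref{lem:identity1term2even} — together with getting the resulting constant right; the change of variables, the chain-rule collapse, and the limit interchanges are then routine.
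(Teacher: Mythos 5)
Your proposal is correct and follows essentially the same route as the paper: regularize $\log\abs{\cdot}$ by $\Phi_k$ and $\chi_\Omega$ by $\varphi_m$, transfer all $n-1$ radial derivatives onto the kernel by repeated integration by parts, observe that the single surviving boundary term (coming from $\partial_s^{(n-2)/2}\bigl(s^{(n-2)/2}\M f(x,\sqrt{s})\bigr)\vert_{s=0}=\bigl(\tfrac{n-2}{2}\bigr)!\,f(x)$) cancels the $\log r_2$-integral of Lemma~\ref{lem:identity1term2even}, and pass to the limit by dominated convergence. The substitution $s=r_1^2$, $\tau=r_2^2$ and the order in which you perform the integrations by parts are only cosmetic deviations from the paper's computation, which works directly in $r_1,r_2$ and undoes the $r_2$-integrations by parts on the boundary term before matching it against the $\log r_2$ term.
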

	\begin{proof}
		\begin{enumerate}[wide=\parindent,label=(\roman*)]
			\item \label{item:identity2term2even}We first show
			\begin{align*}
				\int_{\Omega}&\int_0^\infty\int_0^\infty \partial_{r_1}\D_{r_1}^{\frac{n-2}{2}}\left(r_1^{n-2}\M f(x,r_1)\right)r_2\D_2^{\frac{n-2}{2}}\left(r_2^{n-2}\M g(x,r_2)\right)\\
				&\hspace{1.375cm}\cdot\log\abs{r_2^2-r_1^2}\d{r_2}\d{r_1}\d{x}\\
				&=\lim_{m,k\to\infty}\Bigg[\frac{(-1)^{\frac{n}{2}}2^{\frac{n-2}{2}}\gamma_n}{n}\int_{\R^n}\int_0^\infty \varphi_m(x)f(x)r_2^{n-1}\M g(x,r_2)\Phi_k^{\left(\frac{n-2}{2}\right)}\left(r_2^2\right)\d{r_2}\d{x}\\
				&\hspace{1.75cm} + (-1)^{\frac{n-2}{2}}2^{n-1}\int_{\R^n}\int_0^\infty\int_0^\infty \varphi_m(x)r_1^{n-1}r_2^{n-1}\M f(x,r_1)\M g(x,r_2)\\
				&\hspace*{6.25cm}\cdot\Phi_k^{(n-1)}(r_2^2-r_1^2)\d{r_2}\d{r_1}\d{x}\Bigg]
			\end{align*}
			for all $x\in\Omega$. For that purpose, we apply Lebesgue's dominated convergence theorem to deduce
			\begin{equation}
			\label{eq:identity2term2even1}
			\begin{aligned}
			&\int_{\Omega}\int_0^\infty\int_0^\infty \partial_{r_1}\D_{r_1}^{\frac{n-2}{2}}\left(r_1^{n-2}\M f(x,r_1)\right)r_2\D_2^{\frac{n-2}{2}}\left(r_2^{n-2}\M g(x,r_2)\right)\\
			&\hspace{2cm}\cdot\log\abs{r_2^2-r_1^2}\d{r_2}\d{r_1}\d{x}\\
			&=\lim_{m,k\to\infty}\Bigg[\int_{\R^n}\int_0^\infty\int_0^\infty \varphi_m(x)\partial_{r_1}\D_{r_1}^{\frac{n-2}{2}}\left(r_1^{n-2}\M f(x,r_1)\right)\\
			&\hspace{3.75cm}\cdot r_2\D_2^{\frac{n-2}{2}}\left(r_2^{n-2}\M g(x,r_2)\right)\Phi_k\left(r_2^2-r_1^2\right)\d{r_2}\d{r_1}\d{x}\Bigg].
			\end{aligned}
			\end{equation}
			Furthermore, application of integration $\frac{n-2}{2}$-times with respect to $r_2$ gives us
			\begin{align*}
				\int_0^\infty\int_0^\infty & \partial_{r_1}\D_{r_1}^{\frac{n-2}{2}}\left(r_1^{n-2}\M f(x,r_1)\right)r_2\D_2^{\frac{n-2}{2}}\left(r_2^{n-2}\M g(x,r_2)\right)\Phi_k\left(r_2^2-r_1^2\right)\d{r_2}\d{r_1}\\
				&=(-2)^{\frac{n-2}{2}}\int_0^\infty\int_0^\infty \partial_{r_1}\D_{r_1}^{\frac{n-2}{2}}\left(r_1^{n-2}\M f(x,r_1)\right)r_2^{n-1}\M g(x,r_2)\\
				&\hspace{3cm}\cdot\Phi_k^{\left(\frac{n-2}{2}\right)}\left(r_2^2-r_1^2\right)\d{r_2}\d{r_1},
			\end{align*}
			where changing the order of integration and partial integration with respect $r_1$ lead then to
			\begin{align*}
				&\frac{(-1)^{\frac{n}{2}}2^{\frac{n-2}{2}}\gamma_n}{n}\int_0^\infty f(x)r_2^{n-1}\M g(x,r_2)\Phi_k^{\left(\frac{n-2}{2}\right)}\left(r_2^2\right)\d{r_2}\\
				&\quad + (-1)^{\frac{n-2}{2}}2^{\frac{n}{2}}\int_0^\infty\int_0^\infty r_1\D_{r_1}^{\frac{n-2}{2}}\left(r_1^{n-2}\M f(x,r_1)\right)r_2^{n-1}\M g(x,r_2)\\
				&\hspace{3.75cm}\cdot\Phi_k^{\left(\frac{n}{2}\right)}\left(r_2^2-r_1^2\right)\d{r_2}\d{r_1}.
			\end{align*}
			Finally, the application of integration by parts formula $\frac{n-2}{2}$-times with respect to variable $r_1$ shows together with \eqref{eq:identity2term2even1} the above identity.
			\item In the last step we apply partial integration $\frac{n-2}{2}$-times on the first term to obtain the relation
			\begin{multline*}
				\int_0^\infty \varphi_m(x)f(x)r_2^{n-1}\M g(x,r_2)\Phi_k^{\left(\frac{n-2}{2}\right)}\left(r_2^2\right)\d{r_2}\\
				=\left(-\frac{1}{2}\right)^{\frac{n-2}{2}}\int_0^\infty \varphi_m(x)f(x)r_2\D_{r_2}^{\frac{n-2}{2}}\left(r_2^{n-2} \M g(x,r_2)\right)\Phi_k\left(r_2^2\right)\d{r_2}.
			\end{multline*}
			Thus, by applying Lebesgue’s dominated convergence theorem we have
			\begin{multline*}
				\lim_{m,k\to\infty}\Bigg[\frac{(-1)^{\frac{n}{2}}2^{\frac{n-2}{2}}\gamma_n}{n}\int_{\R^n}\int_0^\infty \varphi_m(x)f(x)r_2^{n-1}\M g(x,r_2)\Phi_k^{\left(\frac{n-2}{2}\right)}\left(r_2^2\right)\d{r_2}\Bigg]\\
				=-\frac{2\gamma_n}{n}\int_\Omega\int_0^\infty f(x)r_2\D_{r_2}^{\frac{n-2}{2}}\left(r_2^{n-2} \M g(x,r_2)\right)\log r_2\d{r_2}\d{x},
			\end{multline*}
			which shows the claimed identity by using Lemma \ref{lem:identity1term2even} and \ref{item:identity2term2even}.\qedhere
		\end{enumerate}
	\end{proof}
	Now, we are ready to reshape the above double integral into our final transformation.
	\begin{prop}
		\label{lem:transsecondtermeven}
		The double integral on the left hand side in Lemma \ref{lem:identity2term2even} can be finally transformed to
		\begin{multline*}
			\int_\Omega\int_0^\infty u(x,t)v(x,t)\d{t}\d{x}\\
			=\frac{(-1)^{\frac{n-2}{2}}}{2^{n+1}\pi^{n-1}}\int_\Omega g(x)\int_\Omega f(y)\frac{\left(\partial_2^{n-2}\Hilbert_2\Radon \chi_\Omega\right)\left(\tilde{n}(x,y),\tilde{s}(x,y)\right)}{\norm{x-y}^{n-1}}\d{y}\d{x}.
		\end{multline*}
	\end{prop}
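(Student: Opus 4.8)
The plan is to take the triple integral in Lemma~\ref{lem:identity2term2even} and rewrite it as a double integral over $\Omega\times\Omega$ against $\partial_2^{n-2}\Hilbert_2\Radon\chi_\Omega$. First I eliminate the spherical means: since $\sigma(\partial\B^n(x,r))=n\omega_n r^{n-1}$ with $\omega_n=\vol(\B^n(0,1))$, polar coordinates centred at $x$ give
\[
\int_0^\infty r^{n-1}\M h(x,r)\,\phi(r)\,dr=\frac{1}{n\omega_n}\int_{\R^n}h(y)\,\phi(\norm{y-x})\,dy
\]
for $h\in C_c^\infty(\Omega)$ and suitable $\phi$. Applying this once in $r_1$ with $\phi(r_1)=\Phi_k^{(n-1)}(r_2^2-r_1^2)$ and once in $r_2$ turns the inner integral of Lemma~\ref{lem:identity2term2even} into $(n\omega_n)^{-2}\int_{\R^n}\int_{\R^n}f(y)g(z)\,\Phi_k^{(n-1)}(\norm{z-x}^2-\norm{y-x}^2)\,dz\,dy$. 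Since $f,g,\varphi_m$ have compact support and $\Phi_k^{(n-1)}$ is continuous, Fubini lets me integrate in $x$ first, and the decisive point is that the $\norm{x}^2$ terms cancel, so $\norm{z-x}^2-\norm{y-x}^2=\norm{z}^2-\norm{y}^2+2\scp{y-z,x}$ is \emph{affine} in $x$. Writing $\rho=\norm{y-z}$, $\theta=(y-z)/\rho=\tilde{n}(z,y)$, $a=\norm{z}^2-\norm{y}^2$ and slicing $\R^n$ along the hyperplanes $E(\theta,s)$ yields
\[
\int_{\R^n}\varphi_m(x)\,\Phi_k^{(n-1)}(\norm{z-x}^2-\norm{y-x}^2)\,dx=\int_{\R}\Radon\varphi_m(\theta,s)\,\Phi_k^{(n-1)}(a+2\rho s)\,ds .
\]

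Next I move the derivatives and take the limits. From $\Phi_k^{(n-1)}(a+2\rho s)=(2\rho)^{-(n-1)}\partial_s^{n-1}\Phi_k(a+2\rho s)$ and $n-1$ integrations by parts in $s$ (boundary terms vanishing for $\varphi_m$ smooth with compact support), the last integral equals $\tfrac{(-1)^{n-1}}{(2\rho)^{n-1}}\int_{\R}(\partial_2^{n-1}\Radon\varphi_m)(\theta,s)\,\Phi_k(a+2\rho s)\,ds$. Letting $k\to\infty$ replaces $\Phi_k$ by $\log\abs{\cdot}$; since $\Radon\varphi_m(\theta,\cdot)$ has compact support, the additive constant $\log(2\rho)$ in $\log\abs{a+2\rho s}=\log(2\rho)+\log\abs{s-s_0}$, with $s_0=-a/(2\rho)=\tilde{s}(z,y)$, contributes nothing, so that $\int_{\R}(\partial_2^{n-1}\Radon\varphi_m)(\theta,s)\log\abs{s-s_0}\,ds$ remains; one more integration by parts, using $\partial_s\log\abs{s-s_0}=\mathrm{p.v.}\,(s-s_0)^{-1}$ together with the definition of $\Hilbert_2$ and its commutation with $\partial_2$, turns this into $\pi(\partial_2^{n-2}\Hilbert_2\Radon\varphi_m)(\theta,s_0)$, so that the $x$-integral equals $\tfrac{(-1)^{n-1}\pi}{(2\rho)^{n-1}}(\partial_2^{n-2}\Hilbert_2\Radon\varphi_m)(\theta,s_0)$. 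Finally $m\to\infty$ replaces $\varphi_m$ by $\chi_\Omega$; relabelling the $g$-variable $z$ as $x$ (so $\theta=\tilde{n}(x,y)$, $s_0=\tilde{s}(x,y)$, $\rho=\norm{x-y}$), the left-hand side of Lemma~\ref{lem:identity2term2even} becomes
\[
\frac{(-1)^{n/2}\,2^{n-2}n^2}{\gamma_n^2}\cdot\frac{1}{(n\omega_n)^2}\cdot\frac{(-1)^{n-1}\pi}{2^{n-1}}\int_\Omega g(x)\int_\Omega f(y)\,\frac{(\partial_2^{n-2}\Hilbert_2\Radon\chi_\Omega)(\tilde{n}(x,y),\tilde{s}(x,y))}{\norm{x-y}^{n-1}}\,dy\,dx .
\]

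It then remains only to simplify the prefactor. For even $n$ one has $\gamma_n=2^{n/2}(n/2)!$ and $\omega_n=\pi^{n/2}/(n/2)!$, hence $\gamma_n\omega_n=(2\pi)^{n/2}$, so $(\gamma_n\omega_n)^2=(2\pi)^n$; together with $(-1)^{n-1}=-1$ and $(-1)^{n/2+1}=(-1)^{(n-2)/2}$ the constant collapses exactly to $(-1)^{(n-2)/2}/(2^{n+1}\pi^{n-1})$, which is the asserted identity. I expect the real work to lie not in this bookkeeping but in justifying the two nested limits analytically: one must fix the approximating families concretely (for instance $\varphi_m\in C_c^\infty(\R^n)$ supported in a fixed neighbourhood of $\overline{\Omega}$ and $\Phi_k$ a suitable smoothing of $\log\abs{\cdot}$) and show, via dominated convergence, that the limits may be interchanged with the $s$-, $y$- and $z$-integrals; this rests on uniform bounds for $\Radon\varphi_m$, for $\partial_2^{n-2}\Hilbert_2\Radon\varphi_m$ and for $\Phi_k$ on the fixed compact sets that appear, which follow from the boundedness of the $\varphi_m$ and the smoothness of $\partial\Omega$ (convexity ensuring that for $x,y\in\Omega$ the point $(\tilde{n}(x,y),\tilde{s}(x,y))$ lies where $\partial_2^{n-2}\Hilbert_2\Radon\chi_\Omega$ is well behaved).
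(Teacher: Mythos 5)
Your proposal is correct and follows essentially the same route as the paper: polar coordinates to remove the spherical means, exploiting that $\norm{z-x}^2-\norm{y-x}^2$ is affine in $x$ to slice into the Radon transform of $\varphi_m$, $n-1$ integrations by parts, the limit $k\to\infty$, one further integration by parts producing $\pi\,\partial_2^{n-2}\Hilbert_2\Radon\varphi_m$, and the same constant bookkeeping via $\gamma_n\omega_n=(2\pi)^{n/2}$. The one part you defer --- justifying $m\to\infty$ --- is where the paper invests most of its effort (writing $\Hilbert_2\Radon\varphi_m=(\Hilbert_2\Radon\chi_\Omega)\ast_2\Radon\psi_m$, restricting to hyperplanes meeting a compactly contained $\Omega_0\supset\supp{f}\cup\supp{g}$ so that $\partial_2^{n-2}\Hilbert_2\Radon\chi_\Omega$ is smooth on a compact set, and deducing uniform convergence there); your sketch names the right ingredients, but note the relevant points are $x,y$ in the supports of $g,f$, not all of $\Omega$.
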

	\begin{proof}
		In the following, we use the approximation of identity $\{\psi_{\mu,\frac{1}{m}}\}_{m\in\N}$ from Lemma \ref{lem:radonmollifier} with $\mu=\frac{n}{2}+1$ and set $\varphi_m\coloneqq \chi_{\Omega}\ast \psi_{\mu,\frac{1}{m}}$ for $m\in\N$. Hence, from the properties of the convolution operator and Lemma \ref{lem:radonmollifier} we see that the family $\set{\varphi_m}_{m\in\N}$ satisfies the assumption in Lemma \ref{lem:identity2term2even} and $\varphi_m\in C_c^n(\R^n)$ as well as $\Radon\varphi_m(\theta,\cdot)\in C_c^n(\R)$ for every $\theta\in\Sp^{n-1}$.
		\begin{enumerate}[wide=\parindent,label=(\roman*)]
			\item \label{item:identity3term2even}
			In the first step of the proof we show
			\begin{multline*}
				\int_\Omega\int_0^\infty u(x,t)v(x,t)\d{t}\d{x}\\
				=\frac{(-1)^{\frac{n-2}{2}}}{2^{n+1}\pi^{n-1}}\lim_{m\to\infty}\int_\Omega g(x)\int_\Omega f(y)\frac{\partial_2^{n-2}\Hilbert_2\Radon\varphi_{m}\left(\tilde{n}(x,y),\tilde{s}(x,y)\right)}{\norm{x-y}^{n-1}}\d{y}\d{x}.
			\end{multline*}
			We observe that the right triple integral in Lemma \ref{lem:identity2term2even} inside the limit equals
			\begin{multline*}
				\int_{\R^n}\int_0^\infty\int_0^\infty \varphi_{m}(x)r_1^{n-1}r_2^{n-1}\M f(x,r_1)\M g(x,r_2)\Phi_{k}^{(n-1)}(r_2^2-r_1^2)\d{r_2}\d{r_1}\d{x}\\
				=\frac{1}{n^2\omega_n^2}\int_{\R^n}\int_\Omega\int_\Omega \varphi_{m}(x)g(y)f(z)\Phi_{k}^{(n-1)}(\norm{x-y}^2-\norm{x-z}^2)\d{z}\d{y}\d{x}
			\end{multline*}
			by using polar coordinates and the substitution rule. Furthermore, Fubini's theorem and the relation $$\norm{x-y}^2-\norm{x-z}^2=2\scp{z-y,x-(z+y)/2}$$ give us
			\begin{equation}
				\label{eq:identity3term2even1}
				\frac{1}{n^2\omega_n^2}\int_\Omega g(y)\int_\Omega f(z)\int_{\R^n}\varphi_{m}(x)\Phi_{k}^{(n-1)}(2\scp{z-y,x-(z+y)/2})\d{x}\d{z}\d{y}.
			\end{equation}
			Next, we apply the substitution rule with the diffeomorphism $$h_{y,z}\colon \R^n\to\R^n\colon (s,x_1,\ldots,x_{n-1})\mapsto s\tilde{n}(y,z)+\sum_{i=1}^{n-1}x_i \theta_i,$$ where $(\tilde{n}(y,z),\theta_1,\ldots,\theta_{n-1})$ is an orthonormal basis of $\R^n$, and Fubini's theorem on the inner integral in \eqref{eq:identity3term2even1} to obtain
			\begin{multline*}
				\int_{\R^n}\varphi_{m}(x)\Phi_{k}^{(n-1)}(2\scp{z-y,x-(z+y)/2})\d{x}\\
				=\int_{\R}\int_{\R^{n-1}}\varphi_{m}(h_{y,z}(s,x))\Phi_{k}^{(n-1)}(2\scp{z-y,h_{y,z}(s,x)-(z+y)/2})\d{x}\d{s}.
			\end{multline*}
			Since $z-y$ is orthogonal to $\theta_1,\ldots,\theta_{n-1}$ we have
			\begin{align*}
				\int_{\R^n}&\varphi_{m}(x)\Phi_{k}^{(n-1)}(2\scp{z-y,x-(z+y)/2})\d{x}\\
				&=\int_{\R}\int_{\R^{n-1}}\varphi_{m}(h_{y,z}(s,x))\Phi_{k}^{(n-1)}\left(\scp{z-y,s\tilde{n}(y,z)-(z+y)/2}\right)\d{x}\d{s}\\
				&=\int_{\R}\int_{\R^{n-1}}\varphi_{m}(h_{y,z}(s,x))\Phi_{k}^{(n-1)}\left(2\norm{z-y}(s-\tilde{s}(y,z))\right)\d{x}\d{s}\\
				&=\int_{\R}\Radon\varphi_{m}\left(\tilde{n}(y,z),s\right)\Phi_{k}^{(n-1)}\left(2\norm{z-y}(s-\tilde{s}(y,z))\right)\d{s}\\
				&=\frac{(-1)^{n-1}}{2^{n-1}\norm{z-y}^{n-1}}\int_{\R}\partial_2^{n-1}\Radon\varphi_{m}\left(\tilde{n}(y,z),s\right)\Phi_{k}\left(2\norm{z-y}(s-\tilde{s}(y,z))\right)\d{s},
			\end{align*}
			where we applied partial integration $n-1$-times in the last step. Moreover, by choosing $\set{\Phi_n}_{n\in\N}$ such that $\abs{\Phi_n}\leq \abs{\log\abs{\cdot}}$ for all $n\in\N$ we deduce from the estimate
			\begin{align*}
				\int_\Omega&\int_\Omega\int_\R\abs{\frac{g(y)f(z)}{\norm{z-y}^{n-1}}\partial_2^{n-1}\Radon\varphi_{m}\left(\tilde{n}(y,z),s\right)\Phi_{k}\left(2\norm{z-y}(s-\tilde{s}(y,z)\right)}\d{s}\d{z}\d{y}\\
				&\leq \int_\Omega\int_\Omega \frac{\abs{g(y)f(z)}}{\norm{z-y}^{n-1}}\int_\R \abs{\partial_2^{n-1}\Radon\varphi_{m}\left(\tilde{n}(y,z),s+\tilde{s}(y,z)\right)}\\
				&\hspace{4cm}\cdot\left(\abs{\log\abs{2\norm{z-y})}}+\abs{\log\abs{s}}\right)\d{s}\d{z}\d{y} < \infty,
			\end{align*}
			Lebesgue's theorem and Lemma \ref{lem:identity2term2even}
			\begin{align*}
				\int_\Omega&\int_0^\infty u(x,t)v(x,t)\d{t}\d{x}\\
				&=\frac{(-1)^{\frac{n-2}{2}}}{2\gamma_n^2\omega_n^2}\lim_{m\to \infty}\int_\Omega g(y)\int_\Omega \frac{f(z)}{\norm{z-y}^{n-1}}\\
				&\hspace{3.25cm}\cdot\int_{\R}\partial_2^{n-1}\Radon\varphi_{m}\left(\tilde{n}(y,z),s\right)\log\abs{2\norm{z-y}(s-\tilde{s}(y,z))}\d{s}\d{z}\d{y}.
			\end{align*}
			Furthermore, application of integration by parts to the inner integral yields
			\begin{align*}
				\int_{\R}\partial_2^{n-1}\Radon\varphi_{m}\left(\tilde{n}(y,z),s\right)\log&\abs{2\norm{z-y}(s-\tilde{s}(y,z))}\d{s}\\
				&=\lim_{\delta\searrow 0}\bigintsss_{\R\setminus(\tilde{s}-\delta,\tilde{s}+\delta)}\frac{\partial_2^{n-2}\Radon\varphi_{m}\left(\tilde{n}(y,z),s\right)}{\tilde{s}(y,z)-s}\d{s}\\
				&=\pi\Hilbert_2\partial_2^{n-2}\Radon\varphi_{m}\left(\tilde{n}(y,z),\tilde{s}(y,z)\right),
			\end{align*}
			where we set for short $\tilde{s}=\tilde{s}(y,z)$. Hence, the equalities $\gamma_n=2^{n/2}(\frac{n}{2})!$ and $\omega_n=\pi^{n/2}/\Gamma(\frac{n}{2}+1)=\pi^{n/2}/(\frac{n}{2})!$ show the above identity.

				\item  \label{item:diffhilbertradon}  Because $\Omega$ is smooth and convex, the Radon transform $\Radon\chi_\Omega$ is smooth on the set $A = \{(\theta,s)\in\Sp^{n-1}\times\R\mid E(\theta,s)\cap\Omega\neq\emptyset \}$. As the Hilbert transform is the  distributional  convolution with   the principal value distribution   $\operatorname{P.V.} 1/s$, its  Hilbert transform $\Hilbert_2\Radon\chi_\Omega$ is smooth on the set $A$, too (see, for example,  \cite{duistermaat2010distributions}).

				\item \label{item:convhilbertradon} Next, we define the set $\Omega_0\coloneqq\set{x\in\Omega\mid \mathrm{dist}(x,\partial\Omega)>\rho}$ and $A_0=\{(\theta,s)\in\Sp^{n-1}\times\R\mid E(\theta,s)\cap\Omega_0\neq\emptyset \}$, where $\rho\coloneqq\mathrm{dist}(\supp{f}\cup\supp{g},\partial\Omega)/2$. Since $\Omega$ is convex, we have that $\Omega_0$ is convex.
				
				From the properties of the Radon transform we have $\Radon \varphi_m=\Radon\chi_\Omega\ast_2\Radon\psi_m$, where we set for short $\psi_m\coloneqq\psi_{\mu,\frac{1}{m}}$. Since $\Radon\chi_\Omega(\theta,\cdot)\in L^2(\R)$, we deduce from the boundedness of the Hilbert transform $\Hilbert\colon L^2(\R)\to L^2(\R)$ and Young's inequality the relation $\Hilbert_2 \Radon \varphi_m = (\Hilbert_2\Radon\chi_\Omega)\ast_2\Radon\psi_m$. Moreover, there exists $M\in\N$ such that for all $m\geq M$ $\supp{\Radon\psi_m(\theta,\cdot)}\leq\mathrm{dist}(\overline{\Omega_0},\partial\Omega)$ for a fixed angle $\theta\in\Sp^{n-1}$. Thus, Item \ref{item:diffhilbertradon} implies that for all $m\geq M$ the function $h_\theta(\cdot,y)$ with
				\begin{equation*}
					h_\theta\colon\set{s\in\R\mid (\theta,s)\in A_0}\times\R\to\R\colon (s,y)\mapsto (\Hilbert_2\Radon\chi_\Omega)(\theta,s-y)\Radon\psi_m(\theta,y)
				\end{equation*}
				is smooth for all $y\in\R$, where
				\begin{equation*}
					\partial_2^{n-2} h_\theta(s,y)=\begin{cases}
						\partial_2^{n-2}\Hilbert_2\Radon\chi_\Omega(\theta,s-y)\Radon\psi_m(\theta,y),\quad &y\in\supp{\Radon\psi_m(\theta,\cdot)}\\
						0,\quad &\text{else}
					\end{cases}
				\end{equation*}
				and \[\abs{\partial_2^{n-2} h_\theta(s,y)}\leq \max\set{\abs{\partial_2^{n-2}\Hilbert_2\Radon\chi_\Omega(\tilde{\theta},\tilde{s})}\mid (\tilde{\theta},\tilde{s})\in \overline{A_0}}\abs{\Radon\psi_m(\theta,y)}.\] It then follows from the theorem on parametrized integrals that for all $(\theta,s)\in A_0$
				\begin{align*}
					\partial_2^{n-2}(\Hilbert_2\Radon\chi_\Omega)\ast_2\Radon\psi_m(\theta,s)&=\partial_2^{n-2}\int_\R h_\theta(s,y)\d{y}\\
					&=\int_\R \partial_2^{n-2}\Hilbert_2\Radon\chi_\Omega(\theta,s-y)\Radon\psi_m(\theta,y)\d{y}\\
					&=((\partial_2^{n-2}\Hilbert_2\Radon\chi_\Omega)\ast_2 \Radon\psi_m)(\theta,s).
				\end{align*}
				Hence, Lemma \ref{lem:radonmollifier} implies that $\partial_2^{n-2}\Hilbert_2\Radon\varphi_{m}$ converges uniformly to $\partial_2^{n-2}\Hilbert_2\Radon\chi_\Omega$ on the closed set $\overline{A_0}$. Since $(\tilde{n}(x,y),\tilde{s}(x,y))\in A_0$ for all $x,y\in\Omega_0$, we finally obtain the desired transformation from \ref{item:identity3term2even}.\qedhere
		\end{enumerate}
	\end{proof}

	\subsection{Proof of Theorem \ref{thm:explicitformeven}}
	\label{sec:proof-even}
Because of Proposition \ref{lem:transfirsttermeven} we have
		\begin{equation}
			\label{eq:thmeven1}
			\begin{aligned}
				\int_{\partial{\Omega}}&\int_0^\infty v(y,t)\nd u(y,t) \d{t}\d{\sigma(y)}\\
				&=\frac{1}{\omega_n\gamma_n}(-1)^{\frac{n-2}{2}}\int_{\partial\Omega}\int_0^\infty \int_{\R^n}\bigg(\left(\partial_t \frac{1}{t}\right)^{\frac{n-2}{2}}\nd u(y,t)\bigg)\\
				&\hspace{4.75cm}\cdot\frac{g(x)\chi_{(0,t)}(\norm{x-y})}{\sqrt{t^2-\norm{x-y}^2}}\d{x}\d{t}\d{\sigma(y)}\\ 
				&=\frac{1}{\omega_n\gamma_n}(-1)^{\frac{n-2}{2}}\int_{\Omega}g(x)\int_{\partial\Omega}\int_{\norm{x-y}}^\infty\frac{\left(\partial_t t^{-1}\right)^{\frac{n-2}{2}}\nd u(y,t)}{\sqrt{t^2-\norm{x-y}^2}}\d{t}\d{\sigma(y)}\d{x}.
			\end{aligned}
		\end{equation}
Next, we show that
			\begin{multline} \label{eq:thmeven2}
				\int_\Omega \int_0^\infty \Delta(uv)(x,t)\d{t}\d{x}
				\\ =\frac{(-1)^{\frac{n-2}{2}}}{2^{n+1}\pi^{n-1}}\int_\Omega g(x)\int_\Omega f(y)\frac{\left(\partial_2^n\Hilbert_2\Radon \chi_\Omega\right)\left(\tilde{n}(x,y),\tilde{s}(x,y)\right)}{\norm{x-y}^{n-1}}\d{y}\d{x}
			\end{multline}
			for every test function $g\in C_c^\infty(\Omega)$.
			
			Since $\Delta u$ and $\Delta v$ are solutions of the wave equation with initial data $(\Delta f,0)$ and $(0,\Delta g)$, respectively, as well as $\nabla u$ and $\nabla v$ with respect to $(\nabla f,0)$ and $(0,\nabla g)$,  from Proposition \ref{lem:transsecondtermeven} and the relation $\Delta(uv) = v\Delta u+2\scp{\nabla u,\nabla v}+u\Delta v$ it follows that
			\begin{align*}
				\int_\Omega &\int_0^\infty \Delta(uv)(x,t)\d{t}\d{x}
				\\ &= \frac{(-1)^{\frac{n-2}{2}}}{2^{n+1}\pi^{n-1}}\int_\Omega \int_\Omega\left(\Delta f(y)g(x)+2\sum_{i=1}^n \partial_if(y) \partial_ig(x) +f(y)\Delta g(x)\right)\\
&\hspace{4cm}\cdot\frac{\left(\partial_2^{n-2}\Hilbert_2\Radon \chi_\Omega\right)\left(\tilde{n}(x,y),\tilde{s}(x,y)\right)}{\norm{x-y}^{n-1}}\d{y}\d{x}.
			\end{align*}
			The application of the substitution rule on the double integral with the diffeomorphism $\Phi\colon X \to \Omega \times \Omega\colon
			(p,q)\mapsto (p,p-q)$, where $X\coloneqq\set{(x,x-y)\mid x,y\in\Omega}$, leads to
			\begin{align*}
				\int_X&\left(\Delta f(p-q)g(p)+2\sum_{i=1}^n \partial_if(p-q) \partial_ig(p) +f(p-q)\Delta g(p)\right)\\
				&\hspace{2cm}\cdot\frac{\left(\partial_2^{n-2}\Hilbert_2\Radon \chi_\Omega\right)\left(\tilde{n}(p,p-q),\tilde{s}(p,p-q)\right)}{\norm{q}^{n-1}}\d{(p,q)}\\
				&=\int_X\sum_{i=1}^n \partial_i^2(f\circ(\pi_1-\pi_2)\cdot g\circ \pi_1)(p,q)\\
				&\hspace{3cm}\cdot\frac{\left(\partial_2^{n-2}\Hilbert_2\Radon \chi_\Omega\right)\left(\tilde{n}(p,p-q),\tilde{s}(p,p-q)\right)}{\norm{q}^{n-1}}\d{(p,q)}\\
				&=\int_{\pi_2(X)}\int_{X^{[q]}}\sum_{i=1}^n \partial_i^2(f\circ(\pi_1-\pi_2)\cdot g\circ \pi_1)(p,q)\\
				&\hspace{3cm}\cdot\frac{\left(\partial_2^{n-2}\Hilbert_2\Radon \chi_\Omega\right)\left(\tilde{n}(p,p-q),\tilde{s}(p,p-q)\right)}{\norm{q}^{n-1}}\d{p}\d{q},
			\end{align*}
			where $\pi_1\colon (x,y)\mapsto x$, $\pi_2\colon (x,y)\mapsto y$ and $X^{[q]}\coloneqq\set{p\in\R^n\mid (p,q)\in X}$ for $q\in \pi_2(X)$. From partial integration and the chain rule it then follows that the double integral equals
			\begin{align*}
				&\int_{\pi_2(X)}\int_{X^{[q]}}f(p-q)g(p)\sum_{i=1}^n \partial_i^2\frac{\left(\partial_2^{n-2}\Hilbert_2\Radon \chi_\Omega\right)\left(-q/\norm{q},\tilde{s}(p,p-q)\right)}{\norm{q}^{n-1}}\d{p}\d{q}\\
				&=\int_{\pi_2(X)}\int_{X^{[q]}}f(p-q)g(p)\sum_{i=1}^n \frac{\left(\partial_2^{n}\Hilbert_2\Radon \chi_\Omega\right)\left(\tilde{n}(p,p-q),\tilde{s}(p,p-q)\right)q_i^2}{\norm{q}^{n+1}}\d{p}\d{q}\\
				&=\int_{\pi_2(X)}\int_{X^{[q]}}f(p-q)g(p)\frac{\left(\partial_2^{n}\Hilbert_2\Radon \chi_\Omega\right)\left(\tilde{n}(p,p-q),\tilde{s}(p,p-q)\right)}{\norm{q}^{n-1}}\d{p}\d{q}
			\end{align*}
			One further application of the substitution rule with $\Phi^{-1}$ shows \eqref{eq:thmeven2}.
			
			Finally, from Proposition~\ref{prop:main-id} and Equations \eqref{eq:thmeven1} and \eqref{eq:thmeven2} we have
			\begin{multline*}
				\int_\Omega f(x)g(x)\d{x}\\
				=\int_{\Omega}g(x)\left(\frac{(-1)^{\frac{n-2}{2}}}{2^{\frac{n-2}{2}}\pi^{\frac{n}{2}}}\int_{\partial\Omega}\int_{\norm{x-y}}^\infty\frac{\left(\partial_t t^{-1}\right)^{\frac{n-2}{2}}\nd u(y,t)}{\sqrt{t^2-\norm{x-y}^2}}\d{t}\d{\sigma(y)}+\K_\Omega f(x)\right)\d{x}
			\end{multline*}
			for every test function $g\in C_c^\infty(\Omega)$, which shows the claimed inversion formula in even dimension.
	
\section{Inversion in odd dimension}
\label{sec:formulaodddim}

\subsection{The inversion formulas}
\label{ssec:oddform}

The following theorem is our main result for odd dimensions.

	\begin{theorem}[Inversion formula in odd dimension]
		\label{thm:explicitformodd}
		Let $n\geq 3$ be an odd number, $\Omega\subset \R^n$ be a bounded convex domain with smooth boundary and $f\in C_c^\infty(\Omega)$. Then, for every $x\in\Omega$, we have
		\begin{equation}
			\label{eq:explictformodd}
			f(x)=\frac{1}{(2\pi)^{\frac{n-1}{2}}}(-1)^{\frac{n-3}{2}}\int_{\partial\Omega} \left(\frac{1}{t}\partial_t\right)^{\frac{n-3}{2}}\left(\frac{1}{t}\nd u\right)(y,\norm{x-y})\d{\sigma(y)}+\K_\Omega f(x),
		\end{equation}
		where
		\begin{equation*}
			\K_\Omega f (x)\coloneqq \frac{(-1)^{\frac{n-3}{2}}}{2^{n+1}\pi^{n-1}}\int_\Omega f(y)\frac{\left(\partial_2^n\Radon \chi_\Omega\right)\left(\tilde{n}(x,y),\tilde{s}(x,y)\right)}{\norm{x-y}^{n-1}}\d{y}.
		\end{equation*}
	\end{theorem}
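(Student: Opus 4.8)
We follow the strategy of the even-dimensional proof, transforming the two terms on the right-hand side of the integral identity \eqref{eq:intidentity} separately. Fix a test function $g\in C_c^\infty(\Omega)$ and let $u,v$ be the solutions of \eqref{eq:waveeq} with data $(f,0)$ and $(0,g)$. Since $n\geq 3$ is odd, the strong Huygens principle guarantees that $u(x,\cdot)$ and $v(x,\cdot)$ are smooth with compact support in $t$, uniformly for $x$ in any bounded set; consequently every integral below converges absolutely and all $t$-boundary terms produced by integration by parts vanish. The plan is to show that $2\int_{\partial\Omega}\int_0^\infty v\,\nd u\,\d{t}\,\d{\sigma(x)}$ equals the integral of $g$ against the first (back-projection) term on the right-hand side of \eqref{eq:explictformodd}, that $\int_\Omega\int_0^\infty\Delta(uv)\,\d{t}\,\d{x}=\int_\Omega g(x)\,\K_\Omega f(x)\,\d{x}$, and then to conclude from \eqref{eq:intidentity} that the two sides of \eqref{eq:explictformodd} have the same pairing with every $g\in C_c^\infty(\Omega)$, hence coincide. (For $n=3$ many of the steps below are trivial.)

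\emph{The boundary term} is the odd-dimensional analogue of Proposition~\ref{lem:transfirsttermeven}, and is simpler. Inserting $v(x,t)=\gamma_n^{-1}\bigl(\tfrac1t\partial_t\bigr)^{\frac{n-3}{2}}\bigl(t^{n-2}\M g(x,t)\bigr)$ from \eqref{eq:solwaveeqodd2} and integrating by parts $\tfrac{n-3}{2}$ times in $t$ — the operator $\tfrac1t\partial_t$ has formal adjoint $-\partial_t t^{-1}$ for $\d{t}$, and for $x\in\partial\Omega$ the mean $\M g(x,\cdot)$ vanishes near $t=0$ (since $\supp{g}$ is a positive distance from $\partial\Omega$), while everything vanishes for large $t$ — yields, for $x\in\partial\Omega$,
\begin{equation*}
\int_0^\infty v(x,t)\,\nd u(x,t)\,\d{t}=\frac{(-1)^{\frac{n-3}{2}}}{\gamma_n}\int_0^\infty t^{n-2}\M g(x,t)\,\bigl(\partial_t t^{-1}\bigr)^{\frac{n-3}{2}}\nd u(x,t)\,\d{t}.
\end{equation*}
Writing $t^{n-2}\M g(x,t)=(n\omega_n t)^{-1}\int_{\partial\B^n(x,t)}g\,\d{\sigma}$, integrating over $x\in\partial\Omega$, collapsing the $t$-integral by polar coordinates (i.e. $\int_0^\infty\bigl(\int_{\partial\B^n(y,t)}g(z)\psi(t)\,\d{\sigma(z)}\bigr)\d{t}=\int_{\R^n}g(z)\psi(\norm{z-y})\,\d{z}$), and applying Fubini (legitimate because $\norm{z-y}$ is bounded away from $0$ for $z\in\supp{g}$, $y\in\partial\Omega$), one obtains — using the operator identity $t^{-1}(\partial_t t^{-1})^{k}=(t^{-1}\partial_t)^{k}t^{-1}$ and the elementary evaluation $2/(n\omega_n\gamma_n)=(2\pi)^{-(n-1)/2}$ — that $2\int_{\partial\Omega}\int_0^\infty v\,\nd u\,\d{t}\,\d{\sigma(y)}$ equals the integral of $g$ against the back-projection term of \eqref{eq:explictformodd}.

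\emph{The interior term} is the counterpart of Lemmas~\ref{lem:identity1term2even} and~\ref{lem:identity2term2even} and Proposition~\ref{lem:transsecondtermeven}. The decisive difference from the even case is that the odd-dimensional solution operator \eqref{eq:solwaveeqodd2} is purely differential in $t$: there is no Abel-type integral $\int_0^t r(t^2-r^2)^{-1/2}(\cdots)\,\d{r}$ as in \eqref{eq:solwaveeqeven3}. Hence, after integration by parts in $t$, $\int_0^\infty u(x,t)v(x,t)\,\d{t}$ is a \emph{single} radial integral in which $\M f(x,\cdot)$, $\M g(x,\cdot)$ and their radial derivatives appear at the \emph{same} radius; the kernel coupling the two spherical means is (a derivative of) a Dirac kernel $\delta(r_2^2-r_1^2)$ rather than the logarithm $\log\abs{r_1^2-r_2^2}$ of the even case, so no Hilbert transform is produced. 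Concretely one expands $u,v$ via \eqref{eq:solwaveeqodd}--\eqref{eq:solwaveeqodd2}, writes $r^{n-1}\M f(x,r)=(n\omega_n)^{-1}\int_{\partial\B^n(x,r)}f\,\d{\sigma}$ (and likewise for $g$), unfolds $\int_0^\infty uv\,\d{t}$ into an integral over $\R^n\times\R^n$ carrying the constraint $\norm{x-z}=\norm{x-y}$, integrates $x$ against the mollification $\varphi_m\coloneqq\chi_\Omega\ast\psi_{\mu,\frac1m}$ (with $\mu$ a sufficiently large positive integer, so that $\varphi_m\in C_c^n(\R^n)$ and $\Radon\varphi_m(\theta,\cdot)\in C_c^n(\R)$), and — since $\norm{x-z}^2-\norm{x-y}^2=2\scp{z-y,x-(z+y)/2}$ is linear in $x$ — reduces the $x$-integral, via the substitution $h_{y,z}$ of Proposition~\ref{lem:transsecondtermeven}, to an expression in $\Radon\varphi_m$. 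After the appropriate integrations by parts (balancing the radial derivatives between $f$ and $g$ and passing them onto $\Radon\varphi_m$ through the mollified Dirac kernel) and the passage $m\to\infty$ — legitimate because $\partial_2^{n-2}\Radon\varphi_m\to\partial_2^{n-2}\Radon\chi_\Omega$ uniformly on the compact set $\overline{A_0}$ (here $\Radon\chi_\Omega$ is smooth off the rays tangent to $\partial\Omega$ as $\Omega$ is smooth and convex, and $\Radon\psi_{\mu,\frac1m}(\theta,\cdot)$ has support shrinking to $\{0\}$; no statement about the smoothness of a Hilbert transform is needed) — one arrives at
\begin{equation*}
\int_\Omega\int_0^\infty u(x,t)v(x,t)\,\d{t}\,\d{x}=\frac{(-1)^{\frac{n-3}{2}}}{2^{n+1}\pi^{n-1}}\int_\Omega g(x)\int_\Omega f(y)\frac{\bigl(\partial_2^{n-2}\Radon\chi_\Omega\bigr)\bigl(\tilde{n}(x,y),\tilde{s}(x,y)\bigr)}{\norm{x-y}^{n-1}}\,\d{y}\,\d{x}.
\end{equation*}
Finally, writing $\Delta(uv)=v\Delta u+2\scp{\nabla u,\nabla v}+u\Delta v$, applying the displayed identity to each summand (with $(\Delta f,0),(0,\Delta g),(\partial_i f,0),(0,\partial_i g)$ in place of $(f,0),(0,g)$), then using the substitution $\Phi(p,q)=(p,p-q)$ and an integration by parts in $p$ — which converts the two extra derivatives on $f$ and $g$ into two extra $\partial_2$-derivatives of the kernel, raising $\partial_2^{n-2}$ to $\partial_2^n$ — yields $\int_\Omega\int_0^\infty\Delta(uv)\,\d{t}\,\d{x}=\int_\Omega g(x)\,\K_\Omega f(x)\,\d{x}$, exactly as in the even case.

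\emph{The main obstacle} lies entirely in the interior term; the boundary step is routine. The delicate points are: (i) carrying out the radial integrations by parts in $\int_0^\infty uv\,\d{t}$ and correctly identifying the resulting single-radial kernel — the Leibniz expansion of $\bigl(\tfrac1t\partial_t\bigr)^{\frac{n-3}{2}}(t^{n-2}\M\,\cdot\,)$ has many terms, and one must pin down exactly how the $n-2$ time-derivatives carried by $uv$ end up as derivatives of the Dirac kernel in $r_2^2-r_1^2$ (the role played by the logarithm in the even case); (ii) justifying the unfolding to $\Omega\times\Omega$, the $x$-integration, and the interchange of $\lim_{m\to\infty}$ with the integrals, where the strong Huygens principle is precisely what makes the $t$-boundary terms disappear; and (iii) tracking the signs and constants so as to land exactly on the prefactor $(-1)^{(n-3)/2}(2^{n+1}\pi^{n-1})^{-1}$. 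Item (i) — in particular identifying the correct odd-dimensional substitute for the logarithmic kernel — is the genuinely new part of the argument.
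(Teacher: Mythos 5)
Your proposal is correct and follows essentially the same route as the paper: the same integral identity, the same integration-by-parts/polar-coordinate treatment of the boundary term (with the same operator identity $t^{-1}(\partial_t t^{-1})^{k}=(t^{-1}\partial_t)^{k}t^{-1}$ and constant $2/(n\omega_n\gamma_n)=(2\pi)^{-(n-1)/2}$), the same mollification-and-Radon-transform strategy for the interior term, and the same $\Delta(uv)$ decomposition with the substitution $\Phi(p,q)=(p,p-q)$. The only cosmetic difference is that the paper realizes the coupling kernel as a mollified Heaviside function of $t^2-r^2$ (via $v(x,t)=\int_0^t\partial_r v\,dr$) rather than as a derivative of a Dirac kernel, which is the same object after one integration by parts.
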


	\begin{proof}
The proof will be given in subsection \ref{sec:proof-odd}.
\end{proof}
Similar to the even dimensional case, as a consequence  of Theorem~\ref{thm:explicitformodd}, we derive  the following exact inversion  for   the case that $\Omega$ is an elliptical domain of the form $\set{x \in \R^n \mid \norm{Q x } < 1}$ for some invertible matrix $Q \in \R^{n \times n}$.

	\begin{corollary}[Exact inversion formula for ellipsoids in odd dimension]
		\label{cor:exactformodd}
		Let $n\geq 3$ be an odd number, $\Omega\subset \R^n$ be an elliptical domain and $f\in C_c^\infty(\Omega)$. Then, for every $x\in\Omega$, we have
		\begin{equation}
			\label{eq:exactformodd}
			f(x)=\frac{1}{(2\pi)^{\frac{n-1}{2}}}(-1)^{\frac{n-3}{2}}\int_{\partial\Omega} \left(\frac{1}{t}\partial_t\right)^{\frac{n-3}{2}}\left(\frac{1}{t}\nd u\right)(y,\norm{x-y})\d{\sigma(y)}.
		\end{equation}
	\end{corollary}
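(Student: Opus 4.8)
The plan is to deduce the corollary from Theorem~\ref{thm:explicitformodd} exactly as Corollary~\ref{cor:exactformeven} is obtained from Theorem~\ref{thm:explicitformeven}: it remains only to show that the smoothing term $\K_\Omega f$ vanishes identically when $\partial\Omega$ is an ellipsoid. Since $f\in C_c^\infty(\Omega)$ and $\Omega$ is convex, for every $x\in\Omega$ and every $y\in\supp{f}$ the midpoint $(x+y)/2$ lies in the open set $\Omega$; hence the hyperplane $E(\tilde n(x,y),\tilde s(x,y))$ — the perpendicular bisector of the segment $[x,y]$ — cuts through the interior of $\Omega$. It therefore suffices to prove that $\left(\partial_2^n\Radon\chi_\Omega\right)(\theta,s)=0$ whenever $E(\theta,s)$ passes through the interior of $\Omega$, since then the integrand defining $\K_\Omega f(x)$ vanishes for all relevant $x,y$.

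First I would reduce to the unit ball. Writing $\Omega=\set{x\in\R^n\mid\norm{Q(x-c)}<1}$ with $Q\in\R^{n\times n}$ invertible and $c\in\R^n$, the affine substitution $z=Q(x-c)$ maps $E(\theta,s)\cap\Omega$ onto $E(\eta,\tau)\cap\B^n(0,1)$ with $\eta\coloneqq Q^{-T}\theta/\norm{Q^{-T}\theta}$ and $\tau\coloneqq(s-\scp{c,\theta})/\norm{Q^{-T}\theta}$. Tracking how the $(n-1)$-dimensional surface measure transforms under this map — the relevant Gram factor depends only on $Q$ and $\theta$, not on $s$, because the linear part $Q^{-1}$ carries $\eta^\perp$ onto $\theta^\perp$ and translating a hyperplane does not change its linear part — gives
\[
\Radon\chi_\Omega(\theta,s)=j(Q,\theta)\,\Radon\chi_{\B^n(0,1)}(\eta,\tau),\qquad \Radon\chi_{\B^n(0,1)}(\eta,\tau)=\omega_{n-1}(1-\tau^2)^{\frac{n-1}{2}}\ \text{for }\abs{\tau}<1,
\]
where $\omega_{n-1}$ is the volume of the unit $(n-1)$-ball, $j(Q,\theta)>0$ is independent of $s$, and $\tau$ depends affinely on $s$. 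Since $n$ is odd, $(n-1)/2\in\N$, so $\tau\mapsto(1-\tau^2)^{(n-1)/2}$ is a polynomial of degree $n-1$; composing with the affine change of variable, $s\mapsto\Radon\chi_\Omega(\theta,s)$ coincides, on each interval of parameters $s$ for which $E(\theta,s)$ meets the interior of $\Omega$, with a polynomial of degree $n-1$ in $s$. Its $n$-th derivative therefore vanishes there, which by the first paragraph yields $\K_\Omega f\equiv 0$ and hence \eqref{eq:exactformodd}.

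The calculation is essentially routine; the only points needing a little attention are the bookkeeping of the surface-measure Jacobian $j(Q,\theta)$ in the affine change of variables, and the verification that $\tilde s(x,y)$ always lands in the open parameter interval where $\Radon\chi_\Omega(\theta,\cdot)$ is genuinely a smooth polynomial rather than at an endpoint of its support — the latter being immediate because $(x+y)/2\in\Omega$ is an interior point. Alternatively, one may simply invoke the identity $\partial_2^n\Radon\chi_\Omega=0$ for ellipsoids established in \cite{Hal14}, in complete analogy with the proof of Corollary~\ref{cor:exactformeven}.
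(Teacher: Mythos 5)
Your proposal is correct and follows the paper's own route: the corollary is deduced from Theorem~\ref{thm:explicitformodd} by showing $\K_\Omega f\equiv 0$, which the paper does by simply citing the identity $\partial_2^n\Radon\chi_\Omega=0$ for ellipsoids from \cite{Hal14}. The only difference is that you additionally supply a correct, self-contained verification of that identity (affine reduction to the ball, where $\Radon\chi_{\B^n(0,1)}(\eta,\tau)=\omega_{n-1}(1-\tau^2)^{(n-1)/2}$ is a polynomial of degree $n-1$ in $\tau$ for odd $n$, together with the observation that $\tilde s(x,y)$ lies in the open parameter interval since the midpoint $(x+y)/2$ is interior to $\Omega$), which is a nice bonus but not needed for the paper's argument.
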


	\begin{proof}
		Because of Theorem \ref{thm:explicitformodd} we are left to show that $\K_\Omega f(x)=0$ for $x\in\Omega$ as in even dimension. We refer again to \cite{Hal14}, where the identity $\partial_2^n \Radon\chi_\Omega=0$ in odd dimension has been verified.
	\end{proof}

	For the derivation of the above explicit inversion formula we follow the same strategy as in even dimension. We prove similar lemmas by using now solution formula \eqref{eq:solwaveeqodd} and make use of some parts of the proofs in even dimension. We will also see that the proof of Theorem \ref{thm:explicitformodd} is shorter than the proof in even dimension. One reason for this is that solution of the wave equation in odd dimension has a simpler form than in the even case. On the other hand, the solution of the wave equation has compact support in the time domain for each fixed point when the initial data has compact support.
	
	\subsection{Manipulation of the boundary term}
	
	As in even dimension, we start again by reshaping the first term on the right-hand side in \eqref{eq:intidentity}.
	\begin{prop}
		\label{lem:transfirsttermodd}
		Let $n\geq 3$ be an odd natural number and $f,g\in C_c^\infty(\Omega)$. Then the identity
		\begin{equation*}
			\int_0^\infty v(x,t)\nd u(x,t) \d{t}=\frac{(-1)^{\frac{n-3}{2}}}{n\omega_n\gamma_n}\int_{\R^n} g(y)\left(\frac{1}{t}\partial_t\right)^{\frac{n-3}{2}}\left(\frac{1}{t}\nd u\right)(x,\norm{x-y})\d{y}	
		\end{equation*}
		holds for every $x\in\partial\Omega$.
	\end{prop}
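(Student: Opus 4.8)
The plan is to follow the template of the even-dimensional Proposition~\ref{lem:transfirsttermeven}, the odd case being in fact simpler because here $v(x,\cdot)$ has bounded support in $t$ for each fixed $x$. Put $m\coloneqq\tfrac{n-3}{2}$. Inserting into $\int_0^\infty v\,\nd u\,\d{t}$ the representation \eqref{eq:solwaveeqodd2} for $v$ with initial data $(0,g)$ (so that the $\M f$-term drops out) gives
\[
\int_0^\infty v(x,t)\nd u(x,t)\d{t}
=\frac1{\gamma_n}\int_0^\infty \nd u(x,t)\left(\tfrac1t\partial_t\right)^{m}\!\left(t^{n-2}\M g(x,t)\right)\d{t}.
\]
Two support facts are used throughout: by finite speed of propagation $u(x,t)=0$ for $t<\operatorname{dist}(x,\supp{f})$, hence $\nd u(x,\cdot)$ and all of its $\partial_t$-derivatives vanish on a non-degenerate interval $[0,\delta)$ (non-degenerate since $x\in\partial\Omega$ and $\supp{f}$ is a compact subset of the open set $\Omega$); and, for fixed $x$, $\M g(x,t)=0$ once $t$ exceeds $\sup_{y\in\supp{g}}\norm{x-y}$. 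In particular $\tfrac1t\nd u(x,\cdot)$, and every $(\tfrac1t\partial_t)$- or $(\partial_t\tfrac1t)$-transform of $\nd u(x,\cdot)$, is smooth on $[0,\infty)$ and vanishes identically near $t=0$.

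Next I would integrate by parts $m$ times in $t$ (on an interval $[\varepsilon,R]$ with $0<\varepsilon<\delta$ and $R$ larger than the diameter of $\Omega$, then let $\varepsilon\downarrow0$) so as to move $(\tfrac1t\partial_t)^m$ off the $g$-factor. Each step uses
\[
\int_\varepsilon^R\phi\cdot\left(\tfrac1t\partial_t\right)\psi\d{t}
=\Bigl[\tfrac{\phi}{t}\psi\Bigr]_\varepsilon^R-\int_\varepsilon^R\left(\partial_t\tfrac1t\right)\phi\cdot\psi\d{t},
\]
and every boundary term that occurs vanishes — at $R$ because the $g$-factor is already zero there, at $\varepsilon$ because the $\nd u$-factor and all its transforms are zero there. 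After $m$ steps and the operator identity $(\partial_t\tfrac1t)^m w=t\,(\tfrac1t\partial_t)^m(\tfrac1t w)$ (a one-line induction on $m$, using $(\partial_t\tfrac1t)^m=\partial_t(\tfrac1t\partial_t)^{m-1}\tfrac1t$), the right-hand side becomes
\[
\frac{(-1)^m}{\gamma_n}\int_0^\infty t^{n-1}\M g(x,t)\left(\tfrac1t\partial_t\right)^{m}\!\left(\tfrac1t\nd u(x,\cdot)\right)\!(t)\d{t}.
\]
Finally, writing $t^{n-1}\M g(x,t)=\tfrac1{n\omega_n}\int_{\partial\B^n(x,t)}g(y)\d{\sigma(y)}$ (using $\sigma(\partial\B^n(x,t))=n\omega_n t^{n-1}$) and passing from the radial $t$-integral to an integral over $\R^n$ via polar coordinates centered at $x$ turns this into
\[
\frac{(-1)^{\frac{n-3}{2}}}{n\omega_n\gamma_n}\int_{\R^n}g(y)\left(\tfrac1t\partial_t\right)^{\frac{n-3}{2}}\!\left(\tfrac1t\nd u\right)\!(x,\norm{x-y})\d{y},
\]
which is the asserted identity.

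No step is genuinely hard; the difficulty is bookkeeping. The points demanding care are: carrying the accumulated sign $(-1)^m$ and the conjugate operator $-\partial_t\tfrac1t$ correctly through the $m$ integrations by parts; verifying that $\tfrac1t\nd u(x,\cdot)$ and all the intermediate transforms of $\nd u(x,\cdot)$ are smooth and vanish near $t=0$, which is what makes the repeated integration by parts legitimate and kills all boundary contributions at the origin; and applying the identity that converts $(\partial_t\tfrac1t)^m\nd u$ into $t\,(\tfrac1t\partial_t)^m(\tfrac1t\nd u)$ so that the final formula comes out in exactly the stated form. If one preferred not to invoke finite speed of propagation, the vanishing of the $t\to0$ boundary terms could instead be established by an order-of-vanishing count based on $t^{n-2}\M g(x,t)=t^{2m+1}\M g(x,t)$ together with $\nd u(x,0)=\nd f(x)=0$, exactly in the spirit of the corollary preceding Proposition~\ref{lem:transfirsttermeven}.
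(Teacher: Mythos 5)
Your proposal is correct and follows essentially the same route as the paper: insert the odd-dimensional representation formula for $v$, integrate by parts $\tfrac{n-3}{2}$ times to move $\left(\tfrac{1}{t}\partial_t\right)^{\frac{n-3}{2}}$ onto $\nd u$ (picking up $(-1)^{\frac{n-3}{2}}$ and the conjugate operator, then converting via $\left(\partial_t\tfrac1t\right)^m w=t\left(\tfrac1t\partial_t\right)^m(w/t)$), and finish with polar coordinates. The paper states this in two lines without justifying the boundary terms; your support arguments at $t=0$ and at large $t$ supply exactly the missing details.
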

	\begin{proof}
		Inserting solution formula \eqref{eq:solwaveeqodd} for the function $v$ and applying integration by parts $(n-3)/2$-times lead to
		\begin{equation*}
			\int_0^\infty v(x,t)\nd u(x,t) \d{t}=\frac{(-1)^{\frac{n-3}{2}}}{n\omega_n\gamma_n}\int_0^\infty \M g(x,t)\left(\frac{1}{t}\partial_t\right)^{\frac{n-3}{2}}\left(\frac{1}{t}\nd u\right)(x,t)\d{t}.
		\end{equation*}
	Then, by using polar coordinates and the substitution $y$ with $x-y$ we obtain
	\begin{align*}
			\int_0^\infty v(x,t)\nd u(x,t) \d{t}&=\frac{(-1)^{\frac{n-3}{2}}}{n\omega_n\gamma_n}\int_{\R^n} g(x+y)\left(\frac{1}{t}\partial_t\right)^{\frac{n-3}{2}}\left(\frac{1}{t}\nd u\right)(x,\norm{y})\d{y}\\
			&=\frac{(-1)^{\frac{n-3}{2}}}{n\omega_n\gamma_n}\int_{\R^n} g(y)\left(\frac{1}{t}\partial_t\right)^{\frac{n-3}{2}}\left(\frac{1}{t}\nd u\right)(x,\norm{x-y})\d{y}.	
	\end{align*}
	\end{proof}
	
	\subsection{Manipulation of the interior term}
	
	In view of Lemma \ref{lem:identity1term2even} and \ref{lem:identity2term2even}, we analogously transform the second term in the odd case as follows:
	\begin{lemma}
		\label{lem:identity1term2odd}
		Let $n\geq 3$ be an odd natural number and $f,g\in C_c^\infty(\Omega)$. Then, we have
		\begin{equation*}
			%\label{eq:identity1term2even}
			\begin{aligned}
			\int_\Omega\int_0^\infty & u(x,t)v(x,t)\d{t}\d{x}\\
			&=\frac{(-1)^{\frac{n-1}{2}}2^{n-1}}{\gamma_n^2}\lim_{m,k\to \infty}\int_{\R^n}\int_0^\infty\int_0^\infty \varphi_{m}(x)t^{n-1}r^{n-1}\M f(x,t)\M g(x,r)\\
			&\hspace*{5.675cm}\cdot\Phi_{k}^{(n-1)}(t^2-r^2)\d{r}\d{t}\d{x}
			\end{aligned}
		\end{equation*}
		where $\set{\varphi_m}_{m\in\N}$ is a family of bounded and integrable functions converging pointwise to $\chi_{\Omega}$ and $\set{\Phi_k}_{k\in\N}$ a family of smooth functions converging pointwise to the Heaviside function $H\colon \R\to\R$ almost everywhere.
	\end{lemma}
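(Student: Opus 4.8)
The plan is to follow the route of the even-dimensional Lemmas~\ref{lem:identity1term2even} and~\ref{lem:identity2term2even}, but to exploit the strongly Huygens structure of the odd-dimensional solution formula~\eqref{eq:solwaveeqodd2}, which makes it possible to carry out the whole reduction in a single step. Abbreviate $\D_t\coloneqq\frac{1}{t}\partial_t$ and set
\begin{equation*}
	F(x,t)\coloneqq\D_t^{\frac{n-3}{2}}\bigl(t^{n-2}\M f(x,t)\bigr),\qquad
	G(x,t)\coloneqq\D_t^{\frac{n-3}{2}}\bigl(t^{n-2}\M g(x,t)\bigr),
\end{equation*}
so that $u=\gamma_n^{-1}\partial_t F$ and $v=\gamma_n^{-1}G$ by~\eqref{eq:solwaveeqodd2}. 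Two structural facts drive the argument. First, since $f,g\in C_c^\infty(\Omega)$ and $x\in\Omega$, the spherical means $\M f(x,\cdot)$ and $\M g(x,\cdot)$ are smooth, even, and supported in a bounded $t$-interval, so that $F(x,\cdot)$, $G(x,\cdot)$ and all the partially reduced expressions $\D_t^{\frac{n-3}{2}-\ell}(t^{n-2}\M f(x,t))$, $0\le\ell\le\frac{n-3}{2}$, vanish for all sufficiently large $t$. Second, expanding $t^{n-2}\M f(x,t)=\sum_{j\ge0}a_j(x)t^{n-2+2j}$ and using $\D_t t^m=mt^{m-2}$ shows that $\D_t^{\frac{n-3}{2}-\ell}(t^{n-2}\M f(x,t))$ vanishes at $t=0$ to order at least $2\ell+1$; in particular $F(x,0)=G(x,0)=0$. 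These facts make every boundary term in the integrations by parts below vanish.

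First I would insert~\eqref{eq:solwaveeqodd2} to write $\int_0^\infty u(x,t)v(x,t)\,dt=\gamma_n^{-2}\int_0^\infty\partial_t F(x,t)\,G(x,t)\,dt$ and then unfold this single time integral into a double one: using $G(x,0)=0$, Fubini's theorem and $\int_0^t\partial_rG(x,r)\,dr=G(x,t)$, one gets
\begin{equation*}
	\int_0^\infty u(x,t)v(x,t)\,dt=\frac{1}{\gamma_n^2}\int_0^\infty\!\!\int_0^\infty\partial_tF(x,t)\,\partial_rG(x,r)\,H(t^2-r^2)\,dr\,dt,
\end{equation*}
where $H$ is the Heaviside function and $H(t-r)=H(t^2-r^2)$ for $t,r>0$. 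Integrating over $x\in\Omega$ and inserting the two approximating families, I would then invoke dominated convergence — the $\Phi_k$ being uniformly bounded (say $\abs{\Phi_k}\le1$, as in the even-dimensional case), the $\varphi_m$ uniformly bounded with supports in a fixed bounded set, and $\partial_tF(\cdot,t)$, $\partial_rG(\cdot,r)$ of bounded support — to obtain
\begin{equation*}
	\int_\Omega\int_0^\infty u(x,t)v(x,t)\,dt\,dx=\frac{1}{\gamma_n^2}\lim_{m\to\infty}\lim_{k\to\infty}\int_{\R^n}\int_0^\infty\!\!\int_0^\infty\varphi_m(x)\,\partial_tF(x,t)\,\partial_rG(x,r)\,\Phi_k(t^2-r^2)\,dr\,dt\,dx.
\end{equation*}

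For fixed $m$ and $k$ I would then integrate by parts in the innermost double integral, which is legitimate since $\Phi_k$ is smooth and $\varphi_m$ does not depend on $t,r$. The point is that for smooth $\Psi$ one has
\begin{equation*}
	\partial_t\bigl[\Psi(t^2-r^2)\bigr]=2t\,\Psi'(t^2-r^2),\quad\D_t\bigl[\Psi(t^2-r^2)\bigr]=2\,\Psi'(t^2-r^2),
\end{equation*}
and likewise $\partial_r[\Psi(t^2-r^2)]=-2r\,\Psi'(t^2-r^2)$, $\D_r[\Psi(t^2-r^2)]=-2\,\Psi'(t^2-r^2)$. Moving the outer $\partial_t$ and then the $\frac{n-3}{2}$ factors of $\D_t$ off $t^{n-2}\M f(x,t)$ onto the kernel — each step producing a factor $-2$, one extra derivative on the kernel, and a vanishing boundary term (at $t=0$ by the order-$\ge2\ell+1$ vanishing recorded above together with boundedness of $\Phi_k$, at $t=\infty$ by compact support) — turns $\partial_tF(x,t)$ into $(-2)^{\frac{n-1}{2}}t^{n-1}\M f(x,t)$ tested against $\Phi_k^{\left(\frac{n-1}{2}\right)}(t^2-r^2)$; the analogous $\frac{n-1}{2}$ integrations by parts in $r$, each producing a factor $+2$, turn $\partial_rG(x,r)$ into $2^{\frac{n-1}{2}}r^{n-1}\M g(x,r)$ and bring the kernel to $\Phi_k^{(n-1)}(t^2-r^2)$. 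Since $(-2)^{\frac{n-1}{2}}\cdot2^{\frac{n-1}{2}}=(-1)^{\frac{n-1}{2}}2^{n-1}$, combining with the previous display yields the asserted identity.

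I expect the only genuine difficulty to be the bookkeeping and the handling of the limits: one must verify that each of the $\frac{n-1}{2}$ integration-by-parts steps in each of the two variables has vanishing boundary contribution both at $0$ (the positive-order vanishing of $\D_t^{\frac{n-3}{2}-\ell}(t^{n-2}\M f(x,t))$ together with boundedness of the smooth $\Phi_k$) and at $+\infty$ (strong Huygens), and one must keep the two limits in the prescribed order, the $k$-limit ($\Phi_k\to H$) innermost and performed \emph{before} the integrations by parts, the $m$-limit ($\varphi_m\to\chi_\Omega$) outermost, since the derivatives $\Phi_k^{(n-1)}$ created by the integrations by parts need not be bounded uniformly in $k$. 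Each exchange of a limit with an integral is to be justified by an explicit domination, exactly as in the proofs of Lemmas~\ref{lem:identity1term2even} and~\ref{lem:identity2term2even}.
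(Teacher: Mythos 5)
Your proposal is correct and follows essentially the same route as the paper: you unfold $v(x,t)=\gamma_n^{-1}G(x,t)=\gamma_n^{-1}\int_0^\infty H(t^2-r^2)\,\partial_rG(x,r)\,dr$, introduce $\varphi_m$ and $\Phi_k$ by dominated convergence, and then integrate by parts $\tfrac{n-1}{2}$ times in each of $t$ and $r$, exactly as in the paper's proof, arriving at the same constant $(-1)^{\frac{n-1}{2}}2^{n-1}/\gamma_n^2$. Your additional bookkeeping (the order of vanishing of $\D_t^{\frac{n-3}{2}-\ell}(t^{n-2}\M f(x,t))$ at $t=0$ and the strong Huygens support argument at $t=\infty$) only makes explicit the boundary terms the paper leaves implicit.
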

	\begin{proof}
		From solution formula \eqref{eq:solwaveeqodd2} we deduce
		\begin{equation*}
			v(x,t)=\int_0^t \partial_r v(x,r)\d{r}=\frac{1}{\gamma_n}\int_0^\infty H(t^2-r^2)r\D_r^{\frac{n-1}{2}}(r^{n-2}\M g(x,r))\d{r}.
		\end{equation*}
		Therefore, we obtain from Lebesgue's dominated convergence theorem
		\begin{align*}
			&\int_\Omega\int_0^\infty u(x,t)v(x,t)\d{t}\d{x}\\
			&=\frac{1}{\gamma_n^2}\lim_{m,k\to\infty}\int_{\Omega}\int_0^\infty\int_0^\infty  \varphi_m(x)t\D_t^{\frac{n-1}{2}}(t^{n-2}\M f(x,t))r\D_r^{\frac{n-1}{2}}(r^{n-2}\M g(x,r))\\
			&\hspace{4cm}\cdot\Phi_k(t^2-r^2)\d{r}\d{t}\d{x}\\
			&=\frac{(-2)^{\frac{n-1}{2}}}{\gamma_n^2}\lim_{m,k\to\infty}\int_{\Omega}\int_0^\infty\int_0^\infty\varphi_m(x)t^{n-1}\M f(x,t)r\D_r^{\frac{n-1}{2}}(r^{n-2}\M g(x,r))\\
			&\hspace{4.75cm}\cdot\Phi_k^{\left(\frac{n-1}{2}\right)}(t^2-r^2)\d{r}\d{t}\d{x}\\
			&=\frac{(-1)^{\frac{n-1}{2}}2^{n-1}}{\gamma_n^2}\lim_{m,k\to\infty}\int_{\Omega}\int_0^\infty\int_0^\infty\varphi_m(x)t^{n-1}\M f(x,t)r^{n-1}\M g(x,r)\\
			&\hspace{5.5cm}\cdot\Phi_k^{(n-1)}(t^2-r^2)\d{r}\d{t}\d{x},
		\end{align*}
		where we used solution formula \eqref{eq:solwaveeqodd2} for $u$ and applied integration by parts $(n-1)/2$-times with respect to $t$ and $r$. 
	\end{proof}
	\begin{prop}
		\label{lem:transsecondtermodd}
		The left double integral in Lemma \ref{lem:identity1term2odd} can be transformed to
		\begin{multline*}
			\int_\Omega\int_0^\infty u(x,t)v(x,t)\d{t}\d{x}\\
			=\frac{(-1)^{\frac{n-3}{2}}}{2^{n+1}\pi^{n-1}}\int_\Omega g(x)\int_\Omega f(y)\frac{\left(\partial_2^{n-2}\Radon \chi_\Omega\right)\left(\tilde{n}(x,y),\tilde{s}(x,y)\right)}{\norm{x-y}^{n-1}}\d{y}\d{x}.
		\end{multline*}
	\end{prop}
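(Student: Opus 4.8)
The argument runs in close parallel to the proof of Proposition~\ref{lem:transsecondtermeven}, but is in fact shorter: since the kernel in Lemma~\ref{lem:identity1term2odd} is the Heaviside function $H$ rather than $\log\abs{\cdot}$, pairing it with the $(n-1)$-st $s$-derivative of the one-dimensional Radon transform collapses one order of differentiation at once, so no Hilbert transform appears and the step corresponding to item~(ii) of the even-dimensional proof is unnecessary. First I would fix the same approximations as there: choose a positive integer $\mu$ (as in the even case, e.g.\ $\mu=(n+1)/2$), put $\psi_m\coloneqq\psi_{\mu,1/m}$ and $\varphi_m\coloneqq\chi_\Omega\ast\psi_m$, so that $\set{\varphi_m}$ meets the hypotheses of Lemma~\ref{lem:identity1term2odd} and $\Radon\varphi_m(\theta,\cdot)\in C_c^{n-1}(\R)$ for every $\theta\in\Sp^{n-1}$, and take smoothings $\Phi_k$ of $H$ with $\abs{\Phi_k}\leq 1$.

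Starting from Lemma~\ref{lem:identity1term2odd}, I would pass the $t$- and $r$-integrals of the spherical means to polar coordinates (using $\sigma(\partial\B^n(x,\rho))=n\omega_n\rho^{n-1}$) and apply the substitution rule, turning the triple integral inside the limit into $\frac{1}{n^2\omega_n^2}\int_{\R^n}\int_\Omega\int_\Omega\varphi_m(x)g(y)f(z)\,\Phi_k^{(n-1)}\bigl(\norm{x-z}^2-\norm{x-y}^2\bigr)\,dz\,dy\,dx$, exactly as at the start of the even proof. Then, using $\norm{x-z}^2-\norm{x-y}^2=2\scp{y-z,\,x-(y+z)/2}$ and Fubini, I would compute the inner $x$-integral by substituting with the diffeomorphism $h_{y,z}$ that places a coordinate axis along the unit vector in direction $y-z$, reducing it to $\int_\R\Radon\varphi_m\bigl(\tilde{n}(z,y),s\bigr)\,\Phi_k^{(n-1)}\bigl(2\norm{z-y}(s-\tilde{s}(z,y))\bigr)\,ds$. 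Integrating by parts $n-1$ times in $s$ (boundary terms vanishing by compact support) moves all derivatives off $\Phi_k$; passing $k\to\infty$ replaces $\Phi_k$ by $H$, and $\int_\R\partial_2^{n-1}\Radon\varphi_m(\theta,s)\,H(s-\tilde{s})\,ds=-\partial_2^{n-2}\Radon\varphi_m(\theta,\tilde{s})$, so the inner integral becomes $\partial_2^{n-2}\Radon\varphi_m\bigl(\tilde{n}(z,y),\tilde{s}(z,y)\bigr)$ up to an explicit power of $2\norm{z-y}$ and a sign.

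Collecting the prefactor $\frac{(-1)^{(n-1)/2}2^{n-1}}{\gamma_n^2}$ from Lemma~\ref{lem:identity1term2odd}, the $\frac{1}{n^2\omega_n^2}$ from the polar substitutions, and the $\pm 2^{-(n-1)}$ from the $s$-integrations, and using $\gamma_n=1\cdot3\cdots(n-2)$, $n\omega_n=\sigma(\Sp^{n-1})$, $\omega_n=\pi^{n/2}/\Gamma(n/2+1)$ (equivalently $(\gamma_n n\omega_n)^2=2^{n+1}\pi^{n-1}$), I would arrive after a renaming of the integration variables at $\int_\Omega\int_0^\infty uv\,dt\,dx=\frac{(-1)^{(n-3)/2}}{2^{n+1}\pi^{n-1}}\lim_{m\to\infty}\int_\Omega g(x)\int_\Omega f(y)\,\frac{\partial_2^{n-2}\Radon\varphi_m(\tilde{n}(x,y),\tilde{s}(x,y))}{\norm{x-y}^{n-1}}\,dy\,dx$. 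To pass $m\to\infty$ I would set $\Omega_0\coloneqq\set{x\in\Omega\mid\mathrm{dist}(x,\partial\Omega)>\rho}$ with $\rho\coloneqq\mathrm{dist}(\supp{f}\cup\supp{g},\partial\Omega)/2$; since $\Omega$ is smooth and convex, $\Radon\chi_\Omega$ is smooth on the set $A_0$ of hyperplanes meeting $\Omega_0$, and from $\Radon\varphi_m=\Radon\chi_\Omega\ast_2\Radon\psi_m$ together with the shrinking support of $\Radon\psi_m$ one concludes, just as in item~(iii) of the even proof but without the Hilbert transform, that $\partial_2^{n-2}\Radon\varphi_m\to\partial_2^{n-2}\Radon\chi_\Omega$ uniformly on $\overline{A_0}$. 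Since $(\tilde{n}(x,y),\tilde{s}(x,y))\in A_0$ whenever $x,y\in\Omega_0$ and $\supp{f}\cup\supp{g}\subset\Omega_0$, dominated convergence (with $\norm{x-y}^{-(n-1)}$ locally integrable) then yields the asserted identity.

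The analytic ingredients — the two passages to a limit by dominated convergence, the local integrability of $\norm{x-y}^{-(n-1)}$, and the smoothness of $\Radon\chi_\Omega$ on $A_0$ — are essentially those of the even case. The one genuinely delicate point, and what sets the odd case apart, is the reckoning of signs: the Heaviside kernel is not even, so one must follow orientations carefully both in Lemma~\ref{lem:identity1term2odd} (where the $t$- and $r$-integrations by parts enter with opposite signs) and in the slicing step, and combine this with the reflection identity $\Radon\chi_\Omega(-\theta,-s)=\Radon\chi_\Omega(\theta,s)$, in order to see that the one-step collapse via $H$ turns the even-dimensional factor $(-1)^{(n-2)/2}$ carrying a Hilbert transform into the odd-dimensional factor $(-1)^{(n-3)/2}$ carrying a plain Radon transform.
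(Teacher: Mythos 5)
Your proposal follows essentially the same route as the paper's proof: the same mollifiers $\varphi_m=\chi_\Omega\ast\psi_{\mu,1/m}$, the polar-coordinate reduction of the triple integral from Lemma~\ref{lem:identity1term2odd}, the slicing diffeomorphism $h_{y,z}$, $n-1$ integrations by parts in $s$, the evaluation $\int_{\tilde{s}}^{\infty}\partial_2^{n-1}\Radon\varphi_m(\theta,s)\,ds=-\partial_2^{n-2}\Radon\varphi_m(\theta,\tilde{s})$, and the uniform convergence of $\partial_2^{n-2}\Radon\varphi_m$ on $\overline{A_0}$ via $\Radon\varphi_m=\Radon\chi_\Omega\ast_2\Radon\psi_m$. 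The argument is correct; if anything, your explicit tracking of the orientation of $\tilde{n}$ via the reflection identity $\Radon\chi_\Omega(-\theta,-s)=\Radon\chi_\Omega(\theta,s)$ (needed because $H$ is not even and $n-2$ is odd) is more careful than the paper, which passes over this point silently, and your integer choice of $\mu$ is the sensible reading of the paper's $\mu=\frac{n}{2}+1$.
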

	\begin{proof}
		For the proof, we use again the approximation of identity $\{\psi_{\mu,\frac{1}{m}}\}_{m\in\N}$ from Lemma \ref{lem:radonmollifier} with $\mu=\frac{n}{2}+1$ and set again $\varphi_m\coloneqq \chi_{\Omega}\ast \psi_{\mu,\frac{1}{m}}$ for $m\in\N$.

		From Lemma \ref{lem:identity1term2odd} we obtain
		\begin{align*}
			\int_\Omega\int_0^\infty & u(x,t)v(x,t)\d{t}\d{x}\\
			&=\frac{(-1)^{\frac{n-1}{2}}2^{n-1}}{n^2\omega_n^2\gamma_n^2}\lim_{m,k\to \infty}\int_{\R^n}\int_\Omega \int_\Omega g(y)f(z)\varphi_{m}(x)\\
				&\hspace{5.25cm}\cdot\Phi_{k}^{(n-1)}(\norm{x-y}^2-\norm{x-z}^2)\d{z}\d{y}\d{x}
		\end{align*}
		by using polar coordinates. Then the same arguments as in the proof of Proposition \ref{lem:transsecondtermeven} give
		\begin{align*}
			\int_\Omega&\int_0^\infty u(x,t)v(x,t)\d{t}\d{x}\\
			&=\frac{(-1)^{\frac{n-1}{2}}}{n^2\omega_n^2\gamma_n^2}\lim_{m\to \infty}\int_\Omega g(y)\int_\Omega \frac{f(z)}{\norm{z-y}^{n-1}}\\
			&\hspace{3cm}\cdot\int_{\R}\partial_2^{n-1}\Radon\varphi_{m}\left(\tilde{n}(y,z),s\right)H(2\norm{z-y}(s-\tilde{s}(y,z)))\d{s}\d{z}\d{y},
		\end{align*}
		where the inner integral can be evaluated to
		\begin{equation*}
			\int_{\tilde{s}(y,z)}^\infty\partial_2^{n-1}\Radon\varphi_{m}\left(\tilde{n}(y,z),s\right)\d{s}=-\partial_2^{n-2}\Radon\varphi_{m}\left(\tilde{n}(y,z),\tilde{s}(y,z)\right).
		\end{equation*}
		Finally, we can use the proof of Proposition \ref{lem:transsecondtermeven} \ref{item:convhilbertradon} to deduce the desired transformation.
	\end{proof}
	
	\subsection{Proof of Theorem \ref{thm:explicitformodd}}
	\label{sec:proof-odd}

		As a first step, we see that the first term in \eqref{eq:intidentity} equals
		\begin{multline*}
			\int_{\partial\Omega}\int_0^\infty v(x,t)\nd u(x,t) \d{t}\\
			=\frac{1}{2^{\frac{n+1}{2}}\pi^{\frac{n-1}{2}}}(-1)^{\frac{n-3}{2}}\int_{\Omega}g(x)\int_{\partial\Omega} \left(\frac{1}{t}\partial_t\right)^{\frac{n-3}{2}}\left(\frac{1}{t}\nd u\right)(y,\norm{x-y})\d{\sigma(y)}\d{x}
		\end{multline*}
		by changing the order of integration. Next, we apply the substitution rule with the diffeomorphism $\Phi$ as in the proof of Theorem \ref{thm:explicitformeven} \eqref{eq:thmeven2} and partial integration on the right integral in Proposition \ref{lem:transsecondtermodd} to obtain
		\begin{multline*}
			\int_\Omega \int_0^\infty \Delta(uv)(x,t)\d{t}\d{x}\\
			=\frac{(-1)^{\frac{n-3}{2}}}{2^{n+1}\pi^{n-1}}\int_\Omega g(x)\int_\Omega f(y)\frac{\left(\partial_2^n\Radon \chi_\Omega\right)\left(\tilde{n}(x,y),\tilde{s}(x,y)\right)}{\norm{x-y}^{n-1}}\d{y}\d{x}.
		\end{multline*}
		Hence we have
		\begin{multline*}
			\int_\Omega f(x)g(x)\\
			=\int_{\Omega}g(x)\left(\frac{(-1)^{\frac{n-3}{2}}}{(2\pi)^{\frac{n-1}{2}}}\int_{\partial\Omega} \left(\frac{1}{t}\partial_t\right)^{\frac{n-3}{2}}\left(\frac{1}{t}\nd u\right)(y,\norm{x-y})\d{\sigma(y)}+\K_\Omega f(x)\right)\d{x}
		\end{multline*}
		for every test function $g\in C_c^\infty(\Omega)$, which shows the claimed inversion formula in odd dimension.

	\section{Conclusion} \label{sec:conclusion}
	
	In this article we studied the problem of determining the initial data of the wave equation from Neumann traces in arbitrary dimension. This problem is particularly interesting in PAT, where the aim is to recover the initial pressure distribution $f\colon\R^n\to\R$ from measurements on some boundary enclosing the unknown object. We derived explicit inversion formulas for Neumann measurements on smooth boundaries of convex domains up to an additional integral operator depending on the unknown function $f$. As we have seen, this integral operator vanishes for elliptical domains which results in exact reconstruction.
	
	By taking a closer look to the inversion formulas \eqref{eq:explictformeven} and \eqref{eq:explictformodd}, we observe that the formula for odd dimension requires only knowledge of Neumann traces on a finite time interval. This follows simply from the fact that the distance between two points inside a bounded domains is always smaller than its diameter. However, in even dimensions, the inversion formula requires knowledge of the Neumann trace for all positive times. In practice, only measurements on a finite time interval are known. Therefore, we intend to investigate inversion formulas that only require data on a finite time interval in even dimensions as well. A possible approach to this issue is similar to that mentioned in \cite{FinHalRak07} for Dirichlet traces, where relations between the solution of the wave equation and spherical means have been employed.
	
	In the case of two spatial dimension, in \cite{DreHal20} we derive exact inversion formulas for any linear combination of the solution of wave equation and its normal derivative on circular domains. To study an analogous problem in higher dimensions is an interesting open issue. For that purpose, the derivation of so-called range conditions for the wave equation \cite{agranovsky3range,ambartsoumian2006range,finch2017recovering} could be helpful. Another subject worth studying more profoundly is the development of exact inversion formulas for other special domains.

Finally, note that the presented inversion formulas clearly imply uniqueness of recovering the initial data of the wave equation from Neuman traces on ellipsoids. For more general domains, such a conclusion would follow by proving the invertibility of the operator $I-\K_{\Omega}$.  From \cite{Hal14,nguyen2014reconstruction}, where the same operator appears, it follows that $I-\K_{\Omega}$ is a Fredholm operator. However, we could not prove that $I-\K_{\Omega}$ has a zero kernel. Investigating invertibility for general domains in both the complete and partial data situations is an interesting line for future research.

\appendix

\section{Remaining proofs}

\subsection{Proof of Lemma \ref{lem:radonmollifier} }

As a first step, we have to show \[\int_{\B^n(0,1)}(1-\norm{x}^2)^\mu \d{x}=\frac{1}{a}.\]
			By using polar coordinates and letting $r=\sqrt{u}$ we see that
			\begin{equation*}
				\int_{\B^n(0,1)}(1-\norm{x}^2)^\mu \d{x}=n\omega_n\int_0^1(1-r^2)^\mu r^{n-1}\d{r}=\frac{n\omega_n}{2}\int_0^1(1-u)^\mu u^{\frac{n-2}{2}}\d{u},
			\end{equation*}
			where $\omega_n\coloneqq\vol(\B^n(0,1))$ denotes the volume of the $n$-dimensional unit ball. Then, applying integration by parts $\mu$-times yields
			\begin{align*}
				\int_{\B^n(0,1)}(1-\norm{x}^2)^\mu \d{x}&=\frac{n\omega_n}{2}\frac{\mu!}{\left(\frac{n-2}{2}+1\right)\cdots\left(\frac{n-2}{2}+\mu+1\right)}\\
				&=\frac{n\omega_n}{2}\frac{\Gamma(\mu+1)\Gamma(\frac{n}{2})}{\Gamma(\frac{n}{2}+\mu+1)}=\frac{1}{a},
			\end{align*}
			where we used the identities $\omega_n=\pi^{n/2}/\Gamma(\frac{n}{2}+1)$ and $\Gamma(\frac{n}{2}+1)=\frac{n}{2}\Gamma(\frac{n}{2})$.

To prove the second statement, we first show \eqref{eq:radonmollifier}. By using polar coordinates and letting $r=\sqrt{u}$ again we have
			\begin{align*}
				\Radon\psi_{\mu,\varepsilon}(\theta,s)&=\frac{1}{\varepsilon^n a}\int_{\B^{n-1}(0,\sqrt{\varepsilon^2-s^2})}\bigg(1-\frac{\lVert s\theta+\sum_{i=1}^{n-1}x_i\theta_i\rVert^2}{\varepsilon^2}\bigg)^\nu\d{x}\\
				&=\frac{1}{\varepsilon^n a}\int_{\B^{n-1}(0,\sqrt{\varepsilon^2-s^2})}\bigg(1-\frac{s^2+\norm{x}^2}{\varepsilon^2}\bigg)^\nu\d{x}\\
				&=\frac{(n-1)\omega_{n-1}}{\varepsilon^n a}\int_0^{\sqrt{\varepsilon^2-s^2}}\bigg(1-\frac{s^2+r^2}{\varepsilon^2}\bigg)^\nu r^{n-2}\d{r}\\
				&=\frac{(n-1)\omega_{n-1}}{2\varepsilon^n a}\int_0^{\varepsilon^2-s^2}\bigg(1-\frac{s^2+u^2}{\varepsilon^2}\bigg)^\nu u^{\frac{n-3}{2}}\d{u},
			\end{align*}
			where $(\theta_1,\ldots,\theta_{n-1})$ is a orthonormal basis of $E(\theta,s)$. As in the first step, applying integration by parts $\mu$-times and using the same identities as before lead to
			\begin{align*}
				\Radon\psi_{\mu,\varepsilon}(\theta,s)&=\frac{(n-1)\omega_{n-1}}{2\varepsilon^n a}\frac{\Gamma(\mu+1)\Gamma(\frac{n-1}{2})}{\varepsilon^{2\mu}\Gamma(\frac{n-1}{2}+\mu+1)}\left(\varepsilon^2-s^2\right)^{\frac{n-3}{2}+\mu+1}\\
				&=\frac{\Gamma\left(\frac{n}{2}+\mu+1\right)}{\varepsilon\sqrt{\pi}\Gamma\left(\frac{n-1}{2}+\mu+1\right)}\left(1-\frac{s^2}{\varepsilon^2}\right)^{\frac{n-3}{2}+\mu+1}.
			\end{align*}
			Now, it remains to show $\int_\R \Radon\psi_{\mu,\varepsilon}(\theta,s)\d{s}=1$. By letting $u=s/\varepsilon$, $x=\cos(u)$ and using the well-known relation
			\begin{equation*}
				\int_{-\pi/2}^{\pi/2}\cos(x)^m\d{x}=\frac{\sqrt{\pi}\Gamma(\frac{m+1}{2})}{\Gamma(\frac{m}{2}+1)},\quad m\in\N,
			\end{equation*}
			we finally have
			\begin{align*}
				\int_\R \Radon\psi_{\mu,\varepsilon}(\theta,s)\d{s}&=\frac{\Gamma\left(\frac{n}{2}+\mu+1\right)}{\sqrt{\pi}\Gamma\left(\frac{n-1}{2}+\mu+1\right)}\int_{-\varepsilon}^\varepsilon \left(1-\frac{s^2}{\varepsilon^2}\right)^{\frac{n-3}{2}+\mu+1}\frac{1}{\varepsilon}\d{s}\\
				&=\frac{\Gamma\left(\frac{n}{2}+\mu+1\right)}{\sqrt{\pi}\Gamma\left(\frac{n-1}{2}+\mu+1\right)}\int_{-1}^1 \left(1-u^2\right)^{\frac{n-3}{2}+\mu+1}\d{u}\\
				&=\frac{\Gamma\left(\frac{n}{2}+\mu+1\right)}{\sqrt{\pi}\Gamma\left(\frac{n-1}{2}+\mu+1\right)}\int_{-\pi/2}^{\pi/2}\cos(x)^{n+2\mu}\d{x}=1.
			\end{align*}

\subsection{Proof of Lemma \ref{lem:wave-even}}

First, we show formula \eqref{eq:solwaveeqeven2}. Let $f_n\colon\R^n\to\R^n\colon (r,\varphi,\theta_1,\theta_2,\ldots,\theta_{n-2})\mapsto (x_1,\ldots,x_n)$ be the $n$-dimensional polar coordinate map, where
			\begin{align*}
				x_1&=r\cos\varphi\sin\theta_1\sin\theta_2\cdots\sin\theta_{n-2},\\
				x_2&=r\sin\varphi\sin\theta_1\sin\theta_2\cdots\sin\theta_{n-2},\\
				x_3&=r\cos\theta_1\sin\theta_2\cdots\sin\theta_{n-2},\\
				&\vdots\\
				x_{n-1}&=r\cos\theta_{n-3}\sin\theta_{n-2},\\
				x_n&=r\cos\theta_{n-2},
			\end{align*}
			and $g_n\colon(0,2\pi)\times(0,\pi)^{n-2}\to\R^n\colon (\varphi,\theta_1,\theta_2,\ldots,\theta_{n-2})\mapsto f_n(1,\varphi,\theta_1,\theta_2,\ldots,\theta_{n-2})$ a parametrization of $\Sp^{n-1}$. Then, for a function $h\in C_c^\infty(\Omega)$ and $(x,t)\in\R^n\times(0,\infty)$ we obtain
			\begin{align*}
				\int_{\B^n(x,t)}&\frac{h(y)}{\sqrt{t^2-\norm{y-x}^2}}\d{y}\\
				&=\int_{\R^n}\frac{\chi_{\B^n(0,t)}(y)h(x+y)}{\sqrt{t^2-\norm{y}^2}}\d{y}\\
				&=\int_0^\infty\int_{(0,2\pi)\times(0,\pi)^{n-2}}\frac{\chi_{\B^1(0,t)}(r)h(x+f_n(r,\varphi,\theta))}{\sqrt{t^2-r^2}}\abs{\det \partial f_n(r,\varphi,\theta)}\d{(\varphi,\theta)}\d{r}
			\end{align*}
			by substituting $y$ with $y+x$ and using polar coordinates. Since the Gram determinant of the parametrization $x+rg_n$ of $\partial\B^n(x,t)$ at $(\varphi,\theta)\in(0,2\pi)\times(0,\pi)^{n-2}$ equals $\abs{\det \partial f_n(r,\varphi,\theta)}$ for every $r>0$, we have
			\begin{align*}
				\int_{\B^n(x,t)}\frac{h(y)}{\sqrt{t^2-\norm{y-x}^2}}\d{y}&=\int_0^\infty \frac{\chi_{\B^1(0,t)}(r)}{\sqrt{t^2-r^2}}\int_{\partial\B^n(x,r)}h(y)\d{\sigma(y)}\d{r}\\
				&=\int_0^t\sigma(\partial\B^n(x,r))\frac{\M h(x,r)}{\sqrt{t^2-r^2}}\d{r}.
			\end{align*}
			Inserting this identity into \eqref{eq:solwaveeqeven} by replacing $h$ with $f$ and $g$, and using the relations $\vol(\B^n(x,t))=t^n\omega_n$ and $\sigma(\partial\B^n(x,r))=r^{n-1}n\omega_n$ lead to Equality \eqref{eq:solwaveeqeven2}.
	
	Now, we show formula \eqref{eq:solwaveeqeven3}. According to \eqref{eq:solwaveeqeven2}, we are left to show that
			\begin{multline}
				\label{eq:identity1}
				\left(\frac{1}{t}\partial_t\right)^{\frac{n-2}{2}}\left(\int_0^t\frac{r^{n-1}}{\sqrt{t^2-r^2}}\M h(x,r)\d{r}\right)\\
				=\left(\int_0^t\frac{r}{\sqrt{t^2-r^2}}\left(\frac{1}{r}\partial_r\right)^{\frac{n-2}{2}}\left(r^{n-2}\M h(x,r)\right)\d{r}\right)
			\end{multline}
			for $h\in C_c^\infty(\Omega)$. First, we observe that application of integration by parts yields
			\begin{equation*}
				\left(\frac{1}{t}\partial_t\right)\left(\int_0^t\frac{r^{n-1}}{\sqrt{t^2-r^2}}\M h(x,r)\d{r}\right)\\
				=\int_0^t\sqrt{t^2-r^2}\partial_r\left(r^{n-2}\M h(x,r)\right)\d{r}
			\end{equation*}
			Then the Leibniz rule for integrals gives us
			\begin{equation*}
				\left(\frac{1}{t}\partial_t\right)\left(\int_0^t\frac{r^{n-1}}{\sqrt{t^2-r^2}}\M h(x,r)\d{r}\right)=\int_0^t\frac{r}{\sqrt{t^2-r^2}}\left(\frac{1}{r}\partial_r\right)\left(r^{n-2}\M h(x,r)\right)\d{r}.
			\end{equation*}
			Now, suppose that
			\begin{equation*}
				\left(\frac{1}{t}\partial_t\right)^{k}\left(\int_0^t\frac{r^{n-1}}{\sqrt{t^2-r^2}}\M h(x,r)\d{r}\right)\\
				=\int_0^t\frac{r}{\sqrt{t^2-r^2}}\left(\frac{1}{r}\partial_r\right)^{k}\left(r^{n-2}\M h(x,r)\right)\d{r}
			\end{equation*}
			holds for any value $k<\frac{n-2}{2}$. Since $$\lim_{r\searrow 0}\left(\frac{1}{r}\partial_r\right)^k\left(r^{n-2}\M h(x,r)\right)=0,$$
			we obtain from our assumption and partial integration
			\begin{align*}
				\left(\frac{1}{t}\partial_t\right)^{k+1}&\left(\int_0^t\frac{r^{n-1}}{\sqrt{t^2-r^2}}\M h(x,r)\d{r}\right)\\
				&=\left(\frac{1}{t}\partial_t\right)\left(\int_0^t\frac{r}{\sqrt{t^2-r^2}}\left(\frac{1}{r}\partial_r\right)^k\left(r^{n-2}\M h(x,r)\right)\d{r}\right)\\
				&=\left(\frac{1}{t}\partial_t\right)\left(\int_0^t\sqrt{t^2-r^2}\,\partial_r\left(\frac{1}{r}\partial_r\right)^k\left(r^{n-2}\M h(x,r)\right)\right)
			\end{align*}	
			Hence, another application of the Leibniz rule for integrals implies \eqref{eq:identity1}.

%	\begin{small}
%		\addcontentsline{toc}{section}{References}
%		\bibliographystyle{plain}
%		%\bibliographystyle{abbrv}
%		\bibliography{references}
%	\end{small}
\end{document}